%
%
%
%
\documentclass{amsart}
\usepackage{amsfonts}
\usepackage{graphicx}
 \usepackage{epsfig}
 \usepackage[colorlinks=true]{hyperref}
\hypersetup{urlcolor=blue, citecolor=red}
\newtheorem{theorem}{Theorem}[section]
\newtheorem{lemma}[theorem]{Lemma}

\theoremstyle{definition}
\newtheorem{definition}[theorem]{Definition}
\newtheorem{proposition}[theorem]{Proposition}
\newtheorem{corollary}[theorem]{Corollary}
\newtheorem{example}[theorem]{Example}
\newtheorem{question}[theorem]{Question}

\theoremstyle{remark}
\newtheorem{remark}[theorem]{Remark}

\numberwithin{equation}{section}



\begin{document}

\title{the templates of nonsingular Smale flows on three manifolds}

\author{Bin Yu}
\address{Department of Mathematics, Tongji University, Shanghai, China 20092}

\thanks{The author was supported in part by the National NSF of China (grant no. 11001202).}

\subjclass[2000]{57N10, 58K05; 37E99, 37D45.}

\date{?????  ?, 2010 and, in revised form, ????? ?, 201?.}

\keywords{3-manifold, nonsingular Smale flow, template, filtrating
neighborhood}

\begin{abstract}
 In this paper, we first discuss some
 connections between template theory and the description of basic sets
of Smale flows on 3-manifolds due to F. B\'eguin and C. Bonatti. The
main tools we use are symbolic dynamics, template moves and some
combinatorial surgeries. Second, we obtain some relationship between
the surgeries and the number of $S^1 \times S^2$ factors of $M$ for
a nonsingular Smale flow on a given closed orientable 3-manifold
$M$. Besides these, we also prove that any template $T$ can model a
basic set $\Lambda$ of a nonsingular Smale flow on $nS^1 \times S^2$
for some positive integer $n$.
\end{abstract}
\maketitle

\section{Introduction}
The paper is about using template theory to understand nonsingular
Smale flows (abbreviated as NS flows) on 3-manifolds.

On the level of topological equivalence, dynamics of general
structually stable flow is complicated and not much is know, even in
dimension 3. However, if we restrict ourselves to a special class,
various methods apply. For example, nonsingular Morse Smale flows
(abbreviated as NMS flows) have been effectively studied by
associating NMS flows with a kind of combinational tool, i.e., round
handle decomposition, see \cite{As}, \cite{Mo} and \cite{Wa}. Here
an NMS flow is a structurally stable flow whose nonwandering set is
exactly composed of finite closed orbits.

A larger class than NMS flows  is NS flows. NS flows were first
introduced by J. Franks in the 1980s, see \cite{F1}, \cite{F2}, and
\cite{F5}. An NS flow is a structurally stable flow with
one-dimensional invariant sets and without singularities. For
general NS flows, the situation becomes more complicated than NMS
flows. A theorem of Bowen \cite{Bo} says that a one-dimensional
hyperbolic basic set $\Lambda$ must be topologically equivalent to a
suspension of a subshift of finite type (abbreviated as SSFT).
Therefore, symbolic dynamics is very useful in the study of
one-dimensional hyperbolic basic set. However, it doesn't provide
any embedding information. J. Franks (\cite{F1},  \cite{F2},
\cite{F5}) used homology and graphs to describe some embedding
information of NS flows on 3-manifolds.

To  study NS flows on 3-manifolds extensively, a natural idea is to
discuss a kind of neighborhoods of basic sets and then study how to
glue these neighborhoods together.

Template theory (see Section \ref{section2.1}) provides a kind of
neighborhoods of one-dimensional hyperbolic basic sets, i.e.,
thickened templates.  Thickened templates are useful to the study of
NS flows on 3-manifolds because:
\begin{enumerate}
\item A thickened template is homeomorphic to a
handlebody;
\item Using a thickened template, one can easily read the symbolic dynamics of a basic set and
describe the knot types of the closed orbits in the basic set.
\end{enumerate}
M. Sullivan \cite{Su} and the author \cite{Yu1} used thickened
templates to
 discuss a special type of NS flows on
 3-manifolds. However, it seems hopeless to establish a general framework for NS flows on
 3-manifolds by using thickened templates due to the following two facts:
\begin{enumerate}
\item It is difficult to describe the entrance set and the exit set in
the boundary of a thickened template;
\item Many different templates can be used to model a given nontrivial basic set.
\end{enumerate}

In \cite{BB}, F. B\'eguin and C. Bonatti gave several concepts and
results to describe the behavior of a given nontrivial basic set. An
introduction to their work can be found in Section \ref{section2.3}.
Their work provided another kind of neighborhoods of nontrivial
basic sets on 3-manifolds, i.e., filtrating neighborhoods. We note
that the same concept was also discovered by J. Franks in \cite{F5},
who named it ``building block". Filtrating neighborhoods are useful
because:
\begin{enumerate}
\item  The filtrating neighborhood
of $\Lambda$ is unique up to topological equivalence for a given
nontrivial basic set $\Lambda$;
\item Filtrating neighborhoods are natural chunks for the
reconstruction of the underlying 3-manifold with a Smale flow. See
\cite{F5} and \cite{Re}.
\end{enumerate}
In spite of these advantages, it is difficult to study the
topological structures of filtrating neighborhoods systematically.

In this paper, we first give a structure theorem  (see Theorem
\ref{theorem4.4}) to show some connections between the filtrating
neighborhood and a  thickened template for a given nontrivial basic
set. The structure is described by some parameters $(g_i,k_i,t)$. In
this process, we also discuss some connections between template
theory and some concepts proposed by F. B\'eguin and C. Bonatti (see
Theorem \ref{theorem3.3} and Theorem \ref{theorem4.3}). Our main
tools are some combinatorial surgeries, i.e., surgeries on edge
graphs, template moves and attaching thickened
 surfaces with flows.

 In Theorem \ref{theorem4.5}, we find that for an NS flow on a 3-manifold $M$,
 the parameters $(g_i,k_i)$ are bounded by a topological invariant of 3-manifolds, i.e.,
  the number of the $S^1 \times S^2$ factors
  of $M$ (see Theorem \ref{theorem4.5}). As a corollary (Corollary
  \ref{corollary4.6}), given a template $T$ and a closed orientable 3-manifold
  $M$, there exist at most finitely many filtrating neighborhoods (up to
topological equivalence)  modeled by $T$ in an NS flow on $M$.  As
an application, we give an example (Example \ref{example4.7}) to
discuss the realization of a special type of NS flows on
3-manifolds. Example \ref{example4.7} also shows that although the
topological type of filtrating neighborhood (given $T$ and $M$) is
finite, there are infinitely many possible different ways to embed
some filtrating neighborhood of $T$ as a filtrating neighborhood of
an NS flow on $M$.

 Another interesting problem is the realization of
 one-dimensional basic sets in NS flows on 3-manifolds, which will
 be the topic in the last section. If the basic sets are described by SSFT, the realization problem
is considered in \cite{PS} and
 \cite{F3}. The realization problem of SSFT is completely solved in the sense that for any SSFT and any
 closed orientable 3-manifold $M$,
  there is an NS flow $\phi_t$ on $M$ such that the SSFT can be realized as a basic set of $\phi_t$ (see
  \cite{F5} and Proposition 6.1 in \cite{F1}).
If the basic sets are described by a template,  Meleshuk \cite{Me}
proved that there exists some 3-manifold admitting an NS flow such
that there exists a basic set of the NS flow modeled by the
template.  It is natural to ask: for a given template $T$, which
3-manifolds admit (or do not admit) an NS flow with a basic set
modeled by $T$? The author \cite{Yu2}, proved that:
\begin{enumerate}
\item If a closed orientable 3-manifold $M$ admits an NS flow with a basic
set modeled
     by $T$, then $M=M'\sharp g(T) S^{1}\times S^{2}$. Here $g(T)$
     is the genus of $T$, see \cite{Yu2}.
\item There exists a closed orientable
     3-manifold $M'$  such that
     $M=M'\sharp g(T) S^{1}\times S^{2}$ admits an NS flow with a basic set
     modeled by $T$.
\end{enumerate}
In the last section, we prove that any
  template $T$ can model a basic set $\Lambda$ of an NS flow $\psi_t$
on $nS^1 \times S^2$ for some positive integer $n$ (obviously $n\geq
g(T)$), see Theorem \ref{theorem5.5}.

\section{Preliminaries} \label{section2}
Basic definitions and facts about dynamical systems can be found in
\cite{Ro}.

\subsection {Template} \label{section2.1}

A \textit{Template}  $(T,\phi)$ is a smooth branched 2-manifold $T$,
constructed from two types of charts, called \textit{joining charts}
and \textit{splitting charts}, together with a semi-flow. A
semi-flow is the same as flow except that one cannot back up
uniquely. The semi-flows are indicated by arrows on charts in Figure
1. The gluing maps between charts must respect the semi-flow and act
linearly on the edges.

\begin{figure}[htp]
\begin{center}
  \includegraphics[totalheight=4.5cm]{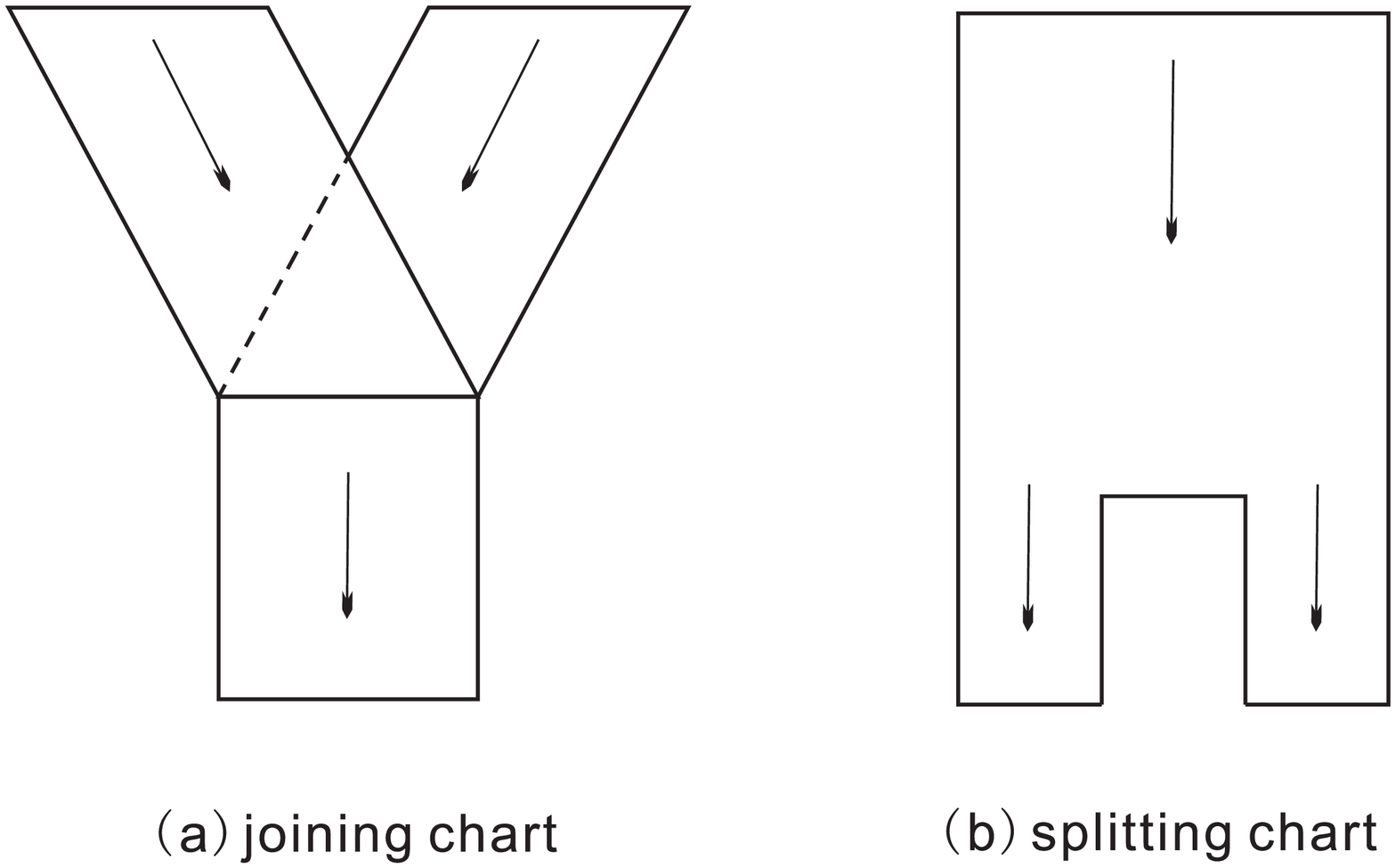}\\
  \caption{}\label{figure1}
  \end{center}
\end{figure}

The following template theorem of R. Williams and J. Birman
(\cite{BW1}, \cite{BW2}, \cite{GHS}) shows the importance of
template theory to the study of flows on 3-manifolds.

\begin{theorem}[the template theorem] \label{theorem2.1}
Let $\phi_{t}$ be a smooth flow on a 3-manifold M with a hyperbolic
basic set. The link of closed orbits $L_{\phi}$ is in
 bijective correspondence with the link of closed orbits $L_{T}$
 on a particular embedded template $T\subset M$ (with $L_{T}$ containing at
 most two extraneous orbits).
\end{theorem}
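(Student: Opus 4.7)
Proof plan for the template theorem. Following Birman and Williams, the strategy is to collapse the stable foliation of a neighborhood of the hyperbolic basic set, producing a branched 2-manifold carrying a semi-flow, and then to verify that this collapse preserves the link of closed orbits up to at most two extraneous ones.

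First I would take a neighborhood $N$ of the hyperbolic basic set $\Lambda$ and use the local product structure coming from the splitting $TM|_{\Lambda}=E^{s}\oplus E^{u}\oplus\mathbb{R}\phi$ (for a one-dimensional basic set in $M^{3}$, both $E^{s}$ and $E^{u}$ are one-dimensional). Next I would define the equivalence relation $\sim$ on $N$ by $x\sim y$ if $y$ lies on the local stable manifold $W^{s}_{\mathrm{loc}}(x)$, and check using a Markov partition of $\Lambda$ that this extends consistently across flow boxes. The quotient $T=N/\sim$ then inherits the structure of a smooth branched 2-manifold, with joining and splitting charts matching the local product structure, and the flow $\phi_{t}$ descends to a semi-flow on $T$: backward uniqueness fails precisely along the 1-dimensional branch loci where distinct local stable leaves are identified.

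To realize $T$ as an embedded template in $M$, I would construct a continuous section of the quotient map $\pi\colon N\to T$ by selecting, in each Markov box, a canonical point on each local stable leaf (for instance its intersection with a chosen local unstable manifold) and gluing these selections across box boundaries using the Markov property. For the bijection of closed orbits, every closed orbit of $\phi_{t}$ in $\Lambda$ projects to a closed orbit of the semi-flow on $T$, and conversely every closed orbit of the semi-flow lifts to a closed orbit in $\Lambda$ except possibly for a bounded number of spurious orbits introduced by the collapse near the boundary of $T$; a direct combinatorial analysis using the Markov partition shows that the count of such orbits is at most two.

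The main technical obstacle will be verifying that the quotient $T$ is genuinely a smooth branched 2-manifold — specifically, that the identifications glue consistently across Markov partition boundaries to yield well-defined joining and splitting charts — and bounding the number of extraneous orbits by two. The argument relies on careful choices of the Markov partition together with the structural stability of hyperbolic basic sets, and on an ambient isotopy that realizes the chosen section as an embedded surface inside $M$ while respecting the flow direction.
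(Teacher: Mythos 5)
The paper does not actually prove Theorem \ref{theorem2.1}: it is quoted from Birman--Williams and \cite{GHS}, and the paper only recalls the underlying construction in Section \ref{section2.1} (Markov flowbox neighborhoods, decomposition into joining and splitting flowboxes, and ``crushing'' the stable foliation to obtain a branched 2-manifold with semi-flow). Your plan follows exactly that standard route, and the first two-thirds of it --- local product structure, collapsing along local stable leaves inside Markov boxes, checking compatibility of the identifications across box boundaries, and obtaining joining/splitting charts with the branch locus as the set where backward uniqueness fails --- is the right construction and matches what the paper sketches.

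There is, however, a genuine gap at the one point where the theorem makes a quantitative claim: the bound of \emph{at most two} extraneous orbits. Writing ``a direct combinatorial analysis using the Markov partition shows that the count of such orbits is at most two'' is an assertion, not an argument, and it also locates the extraneous orbits in the wrong place. They are not boundary artifacts counted over Markov boxes; in the Birman--Williams proof they arise from the preliminary modification needed before one can collapse at all (the basic set must first be isolated as an attractor of a nearby flow, via an orbit-splitting/DA-type step, and it is this modification that can introduce up to two new closed orbits --- which is also why, as the paper's Remark after Theorem \ref{theorem2.1} notes, no extraneous orbits occur when the basic set is one-dimensional). Two smaller points you should make explicit rather than wave at: (i) injectivity of the correspondence $L_\phi \to L_T$ follows because two distinct periodic orbits can never lie on a common stable leaf (each would then be contained in the $\omega$-limit set of the other, forcing them to coincide), so the collapse never identifies closed orbits; and (ii) the converse direction --- that closed orbits of the semi-flow on $T$ lift to closed orbits of $\phi_t$ --- is normally obtained from Williams's description of the basic set as the inverse limit of the semi-flow on the branched manifold, not from a counting argument. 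Without these, the ``bijective correspondence with at most two exceptions'' part of the statement, which is the actual content of the theorem beyond the construction of $T$, remains unproved.
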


\begin{remark}
In Theorem 2.1, if the topological dimension of the basic set is 1,
then the correspondence is exactly bijective without any extraneous
orbits.
\end{remark}

Now we introduce template moves, which were introduced by M.
Sullivan and the others (\cite{GHS}, \cite{KSS}).

\begin{definition}[template moves] \label{definition2.3}
For a template, as Figure \ref{figure2} shows, the move in Figure
\ref{figure2} (a) is called a \emph{slide move};  the move in Figure
\ref{figure2} (b) is called a \emph{split move}. Slide moves, split
moves and their converse moves are collectively referred to as
\emph{template moves}.
\end{definition}

\begin{figure}[htp]
\begin{center}
  \includegraphics[totalheight=3cm]{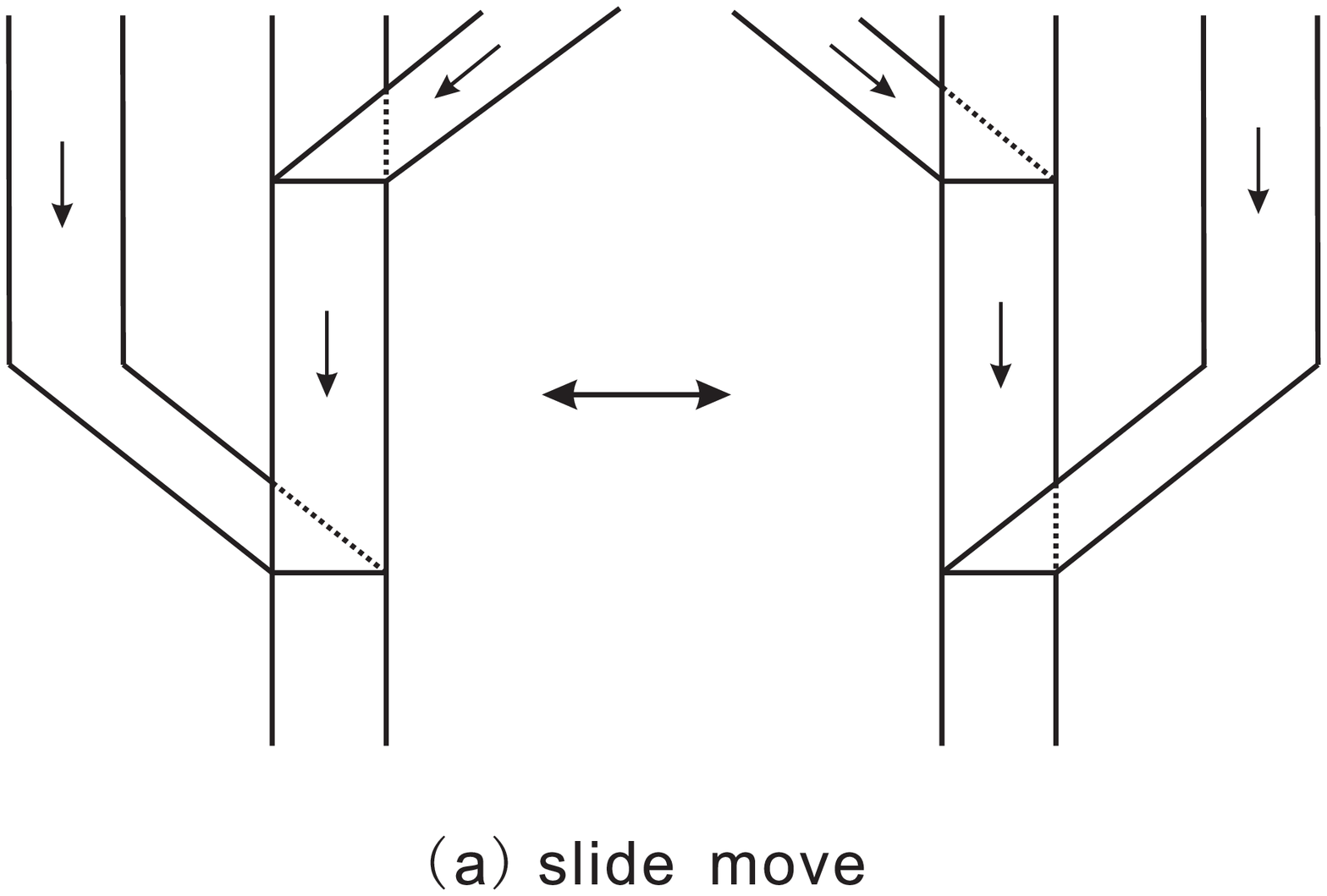}\\
  \end{center}
\end{figure}

\begin{figure}[htp]
\begin{center}
  \includegraphics[totalheight=3cm]{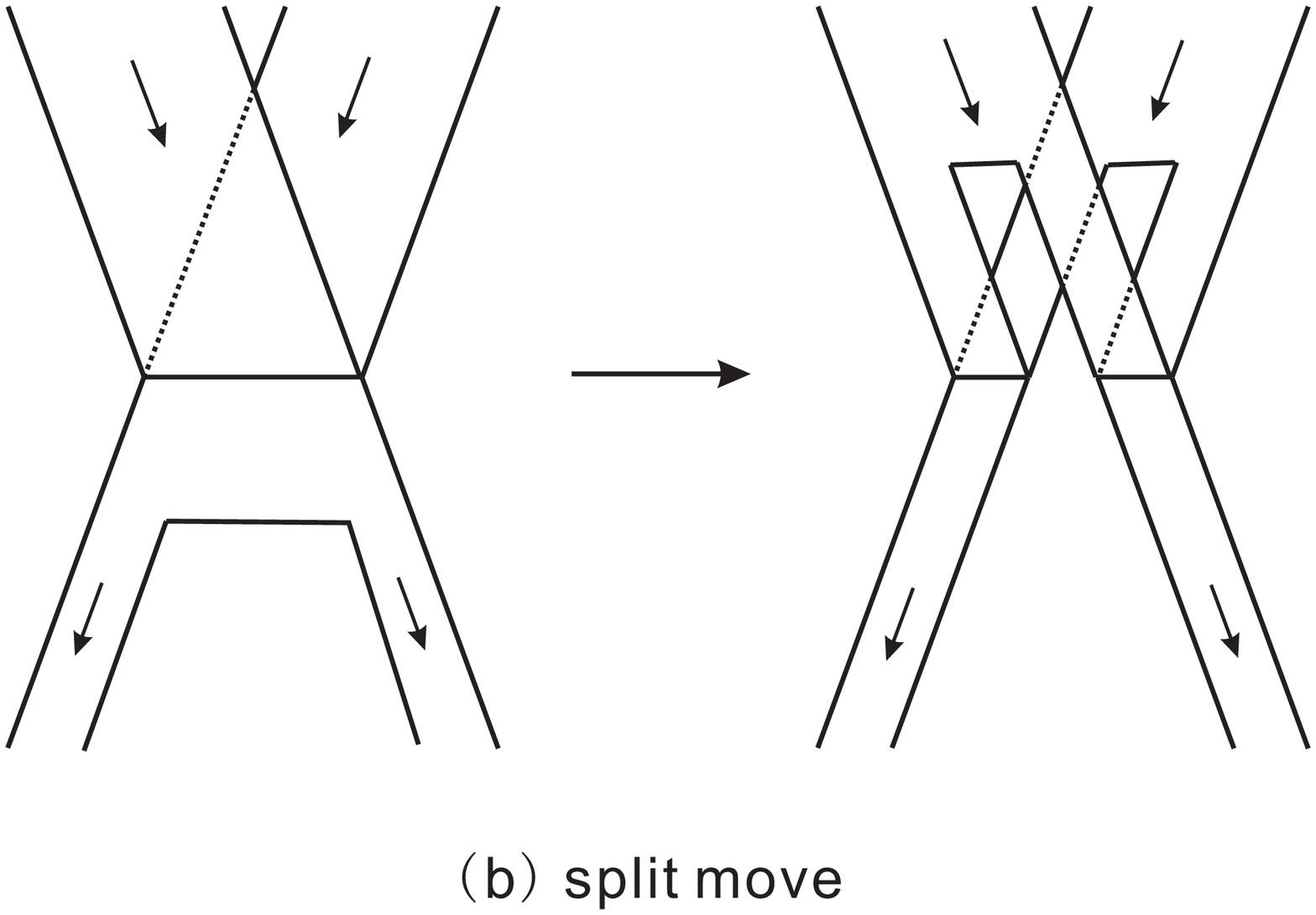}\\
  \caption{}\label{figure2}
  \end{center}
\end{figure}

We now briefly recall the construction of the templates of dimension
1 hyperbolic basic sets, which is useful to our discussion. More
details can be found in
 \cite{GHS}.

Let $B$ be a dimension 1  hyperbolic basic set of a flow $\phi_{t}$.
By Theorem \ref{theorem2.8}, $B$ is conjugate to an SSFT.
Furthermore, Bowen and Walters \cite{BoW} showed that there exists
 a finite union of
disjoint discs such that the Poincar\'e return map on these discs
satisfies the condition of Markov partitions. Throughout this paper,
this union of disjoint discs is called a cross-section. The
neighborhood of one of the above discs is known as a \textit{Markov
flowbox neighborhood}. The incoming and outgoing flowboxes near a
flowbox are shown in Figure \ref{figure4}(a). A flowbox neighborhood
is used to model the flow on the neighborhood of one of the above
discs. With this in mind, we can normalize it to Figure
\ref{figure4}(b) through a small perturbation.

Further, we decompose the perturbed Markov flowbox into some
\textit{joining flowboxes} and some \textit{splitting flowboxes} in
Figure \ref{figure5}. The gluing maps between simple flowboxes must
respect the flow and act linearly on the boundaries.

For a given Markov flowbox neighborhood of $B$ constructed above, we
can ``crush" the stable foliations to $T$ which is a branched
manifold with semiflow. Locally, we obtain joining (resp. splitting)
charts from joining (resp. splitting) flowboxes and gluing maps
between charts from gluing maps between simple flowboxes. It follows
naturally from the definition that the gluing maps between charts
respect the flow and act linearly on the edges.

\begin{figure}[htp]
\begin{center}
  \includegraphics[totalheight=4cm]{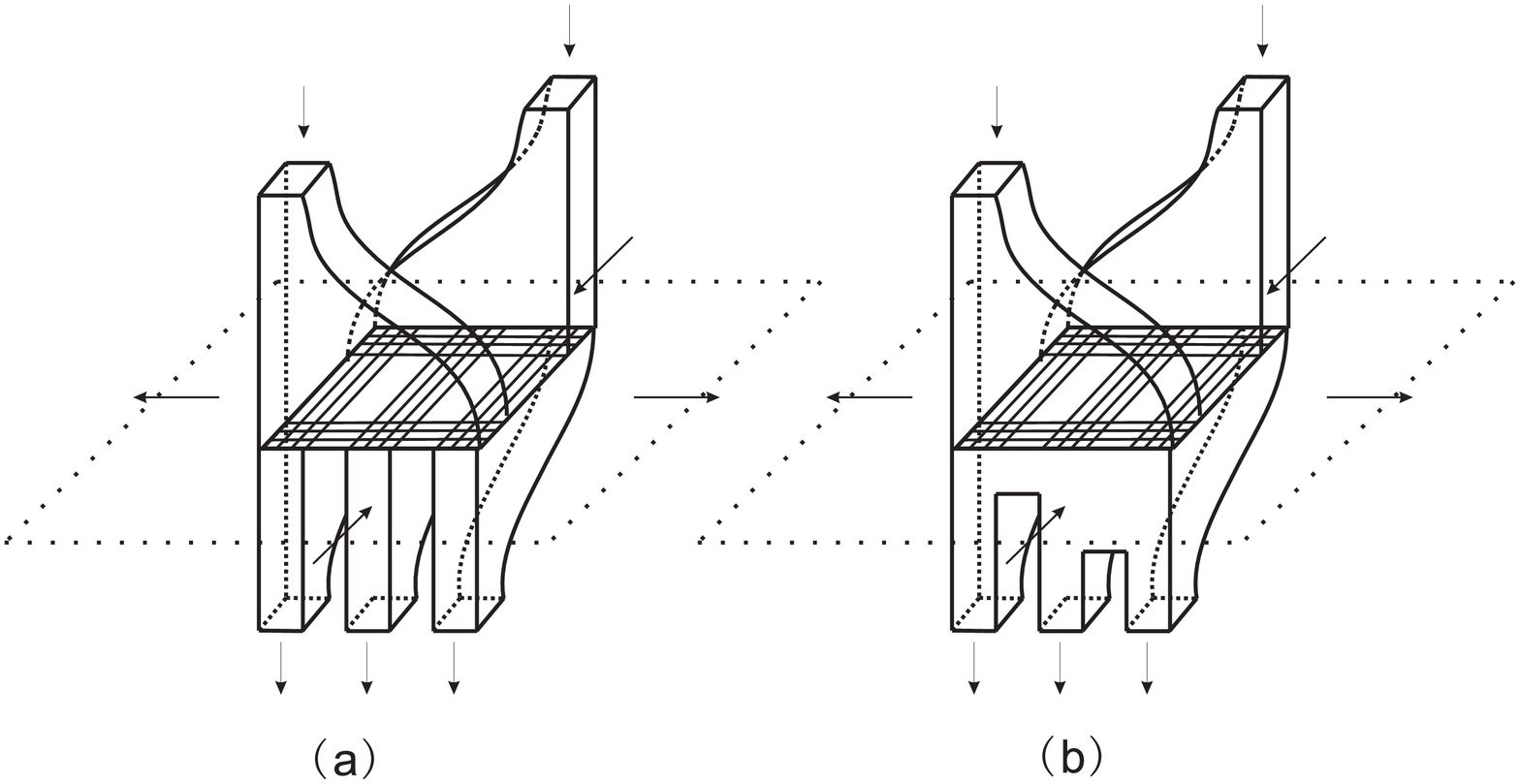}\\
  \caption{}\label{figure4}
  \end{center}
\end{figure}

\begin{figure}[htp]
\begin{center}
  \includegraphics[totalheight=4cm]{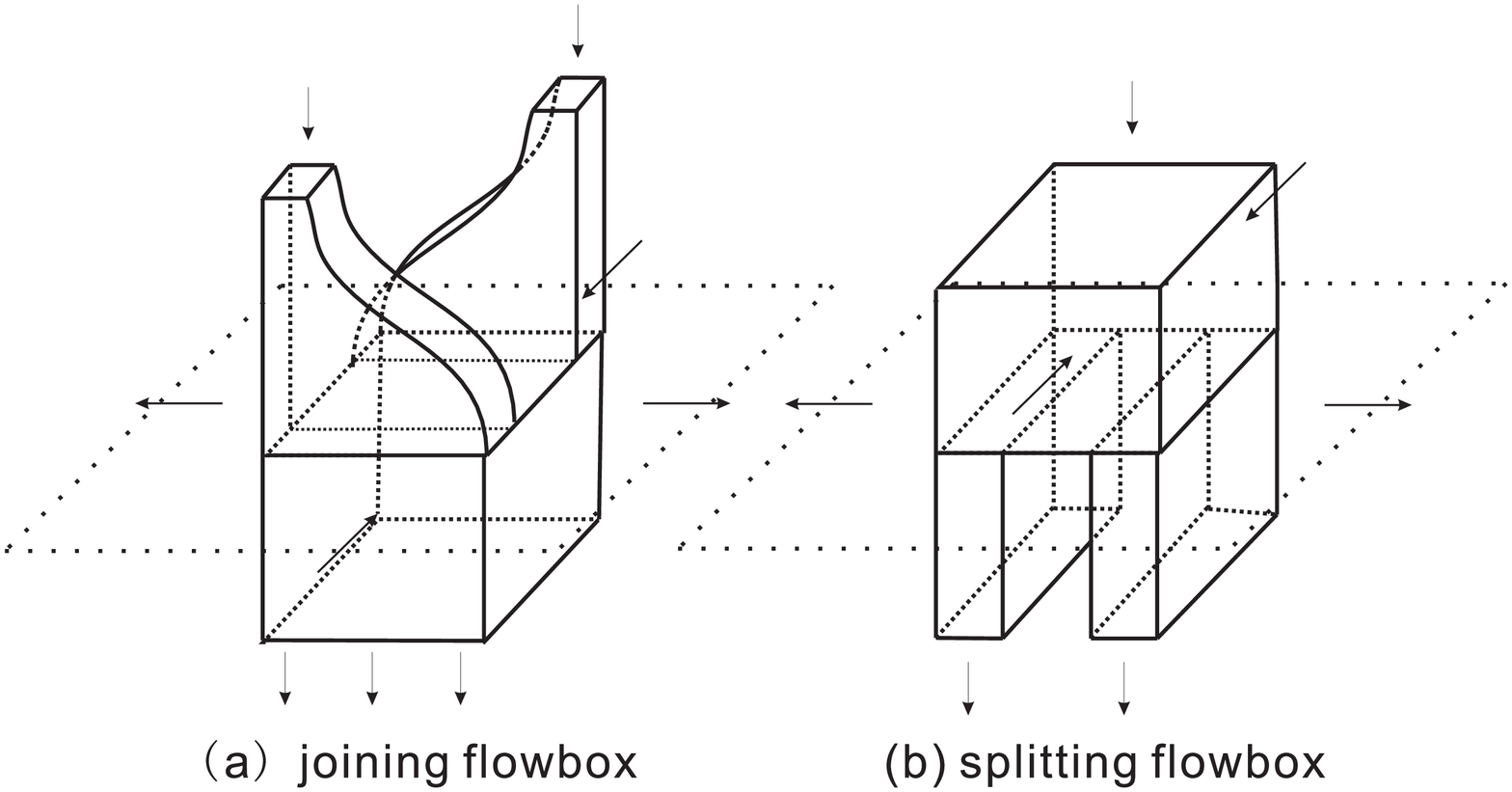}\\
  \caption{}\label{figure5}
  \end{center}
\end{figure}

Therefore, we obtain an embedded template which models the dimension
1 basic set.

\begin{remark}
A template move changes the topological type of the template but
that it is done in such a way as to not change the basic set.
\end{remark}

\subsection{Symbolic dynamics and State splitting of edge graphs}
All the definitions and the facts in this subsection can be found in
\cite{F2}, \cite{LM}.

Let $\Sigma_{n} = \{s=(...,s_{-1},s_{0},s_{1},...) \mid s_{j}\in
\{1,...,n\},\forall j\in \mathbb{Z}\}$. We can define a distance on
$\Sigma_{n}$ as follows: for any $s,t\in\Sigma_{n}$,
$d(s,t)=\Sigma_{j=-\infty}^{+\infty}
\frac{\delta(s_{j},t_{j})}{4^{|j|}}$ where  $\delta(a,b)=\{_{1,~if
~a\neq b}^{0, ~if~a=b}$. $\Sigma_{n}$ is called a \textit{two sided
shift space}. A homeomorphism $\sigma$ on $\Sigma_{n}$ is defined as
following. For any $s\in\Sigma_{n}$, let $t=\sigma(s)$ where
$t_{j}=s_{j+1}$. $(\Sigma_{n},\sigma)$ is called a \textit{full two
sided shift}.

Let $A=(a_{ij})_{n\times n}$, $a_{ij}\in \{0,1\}$. $A$ is called a
\textit{transition matrix}. $\Sigma_{A}$ is defined by
$\Sigma_{A}=\{s=(s_{j})\in \Sigma_{n} \mid a_{s_{j},s_{j+1}}=1,
\forall j\in \mathbb{Z}\}$. We denote $\sigma|_{\sum_{A}}$ by
$\sigma_{A}$. $(\Sigma_{A}, \sigma_{A})$ is called a
\textit{subshift of finite type}.

Actually, we can define a subshift of finite type for any matrix
$A=(a_{ij})_{n\times n}, a_{ij}\in \mathbb{Z}^{+}$ (such a matrix is
called an \textit{adjacent matrix}). For this purpose, we need to
introduce two concepts, \textit{vertex graphs} and \textit{edge
graphs}. For an adjacent matrix $A$, we take $n$ vertexes
$v_{1},v_{2},...,v_{n}$, then attach  $a_{ij}$ oriented edges from
$v_{i}$ to $v_{j}$ for any $i,j\in \{1,...,n\}$. Therefore, we get
an oriented graph $G$ which is called \textit{the vertex graph} of
$A$. Denote the edges of the vertex graph $G$ by $e_{1}, e_{2},...,
e_{m}$. For \emph{the edge graph} $F$ of $A$, take the edges of $G$
as the vertexes, which are still denoted by $e_{1}, e_{2},...,
e_{m}$ and join $e_i$ to $e_j$ if there exists a vertex $p$ in $G$
such that $e_i$ starts from $p$ and $e_j$ terminates at $p$.
Obviously, for two given vertexes $e_{i},e_{j}$ in $F$, there exists
at most one edge which starts from $e_{i}$ and terminates at
$e_{j}$. $F$ determines a unique transition matrix $A'$. $A'$
determines a subshift of finite type, which is called the
\textit{edge shift} of $A$.

For two adjacent matrixes $A$ and $B$, let $X_{A}$ and $X_{B}$ be
the edge shifts of $A$ and $B$ respectively. It is natural to ask
when $X_{A}$ is conjugate to $X_{B}$. In \cite{Wi1}, R. Williams
gave a necessary and sufficient condition by using adjacent matrix.
We state this condition by using edge graphs, see \cite{LM}.

\begin{definition}
\begin{enumerate}
\item Let $G$ be an edge graph with vertex set $V$ and edge set
$\varepsilon$. For a vertex $v\in V$, we divide $v$ into $v_{1}$ and
$v_{2}$, split outgoing edges of $v$ and copy incoming edges of $v$
to obtain a new graph. This surgery is called an \textit{out-split},
see Figure \ref{figure3}. Reversing the orientation of $G$, we
obtain a new graph $G_{1}$. Doing an out-split surgery on $G_{1}$,
we obtain a new graph $G_{2}$. Then reversing the orientation of
$G_{2}$, we obtain a new graph $G'$. We call the surgery from $G$ to
$G'$ an \textit{in-split}.
\item The converse surgery of an
out-split (resp. in-split) is called an \emph{out-amalgamation}
(resp. \textit{in-amalgamation}).
\item By  dividing an
 oriented edge of an edge graph $G$ to two oriented (the same orientation) edges
 by increasing a vertex to the oriented edge of $G$, we obtain a new
 graph $G'$. This surgery is called an \textit{expanding surgery}.
\end{enumerate}
\end{definition}
\begin{figure}[htp]
\begin{center}
  \includegraphics[totalheight=3cm]{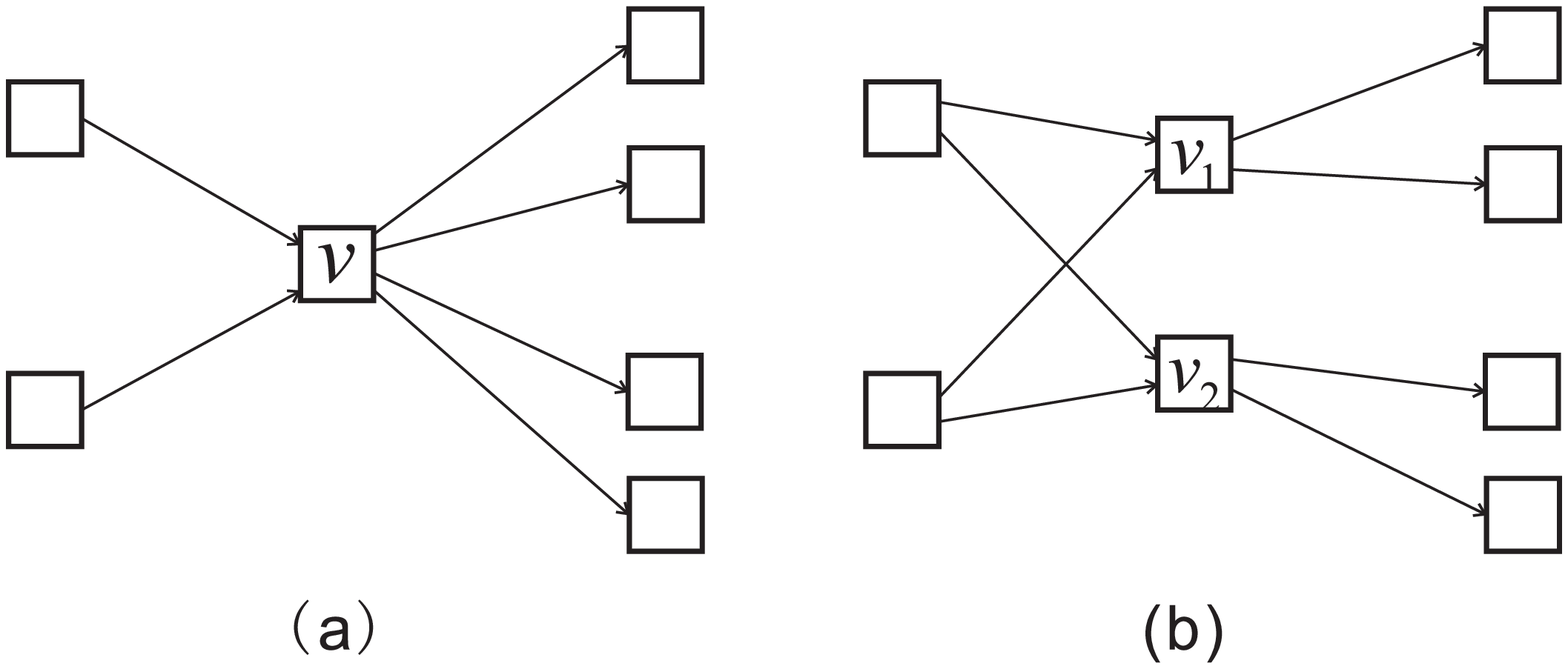}\\
  \caption{}\label{figure3}
  \end{center}
\end{figure}

\begin{theorem}
Every conjugacy from one edge shift to another is a composition of
finite steps of splitting surgeries and amalgamation surgeries.
\end{theorem}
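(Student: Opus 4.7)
The plan is to reduce the statement to Williams' strong shift equivalence theorem and then translate the matrix operations into the graph surgeries defined above. By the Curtis--Hedlund--Lyndon theorem, any conjugacy $\phi:X_A\to X_B$ between edge shifts is realized by a sliding block code with some memory $m$ and anticipation $a$. The first step is therefore to pass to the higher block presentation $X_A^{[N]}$ with $N=m+a+1$, after which $\phi$ is represented by a $1$-block (vertex-labeled) code $\tilde\phi:X_A^{[N]}\to X_B$.

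Second, I would show that the transition from $X_A^{[k]}$ to $X_A^{[k+1]}$ corresponds to a single out-split on the edge graph: each vertex of the $k$-block graph (i.e.\ each length-$k$ path in $G_A$) is split according to the possible one-step extensions, and the incoming edges are copied while the outgoing edges are distributed. Iterating, the entire passage $G_A\leadsto G_A^{[N]}$ is a composition of out-split surgeries. Dually, the inverse map $\phi^{-1}$, also a sliding block code, can be brought to $1$-block form by a sequence of out-splits on $G_B$, and the opposite direction is obtained by in-splits applied to $G_B$.

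Third, having reduced everything to the $1$-block setting, one is left with a bijective $1$-block conjugacy between two edge graphs $G'$ and $G''$ that both refine $G_A$ and $G_B$ respectively. Such a bijection identifies vertices and edges compatibly with the incidence structure, so it is realized by a graph isomorphism (possibly after an expanding surgery to harmonize multiplicities of parallel edges). Assembling the pieces, the original conjugacy factors as a sequence of out-splits taking $G_A$ up to $G'$, an isomorphism $G'\cong G''$, and a sequence of in-amalgamations (the inverses of in-splits) taking $G''$ back down to $G_B$, which gives the claimed decomposition.

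The main obstacle is the third step: turning a $1$-block conjugacy into an explicit chain of elementary graph surgeries requires careful local bookkeeping when vertices in the refined graph have multiple parallel edges or when the splittings on the two sides do not align a priori. Handling this cleanly is where the expanding surgery is used, and one has to verify that the composition of all the elementary moves produced actually induces $\phi$ on the level of bi-infinite sequences, not merely some other conjugacy. The remaining steps are essentially bookkeeping once the higher block presentation is identified with an iterated out-split.
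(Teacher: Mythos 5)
This statement is Williams' Decomposition Theorem; the paper does not prove it but cites it from \cite{Wi1} and \cite{LM} (Lind--Marcus, Theorem 7.1.2), so there is no in-paper argument to compare against. Your first two steps match the standard proof: recoding a sliding block code to a $1$-block code via the higher block presentation, and recognizing $G_A\leadsto G_A^{[N]}$ as an iterated (complete) out-splitting, are both correct and are exactly how the textbook proof begins.

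The genuine gap is your third step, and it is not mere bookkeeping --- it is the actual content of the theorem. Making $\phi$ into a $1$-block code by splitting the domain, and separately making $\phi^{-1}$ into a $1$-block code by splitting the codomain, does not produce a single map that is $1$-block in \emph{both} directions at once: each round of splitting on one side generically increases the window of the map in the other direction. A $1$-block conjugacy whose inverse is not $1$-block need not come from a graph isomorphism (an amalgamation code is already a counterexample), so the reduction you invoke does not close. The standard proof resolves this by an induction on the total window size of $\phi^{-1}$, at each stage performing a splitting whose defining partition is built from $\phi$ itself (grouping edges by their image data) so that the anticipation of the inverse strictly drops while the forward map stays $1$-block; your sketch contains no such mechanism. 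Separately, your appeal to an expanding surgery ``to harmonize multiplicities'' is not permissible here: subdividing an edge is the Parry--Sullivan symbol expansion, which changes the edge shift up to \emph{flow equivalence} but not up to conjugacy (it alters the zeta function), and indeed the statement being proved allows only splittings and amalgamations --- expanding surgeries enter only in the later theorem on topological equivalence of suspensions.
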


For SSFT, Parry and D. Sullivan \cite{PaS} gave a complete
classification
  up to topological equivalence
 by using adjacent matrix. Moreover, J. Franks \cite{F4} found a complete set of computable
 algebraic invariants.  Here we only state this kind of result by using edge graph,
 see \cite{LM}.

\begin{theorem}\label{theorem2.6}
Two SSFT given by edge graphs $G$ and $F$ are topologically
equivalent if and only if there exists a finite sequence of edge
graphs $G=G_{0},G_{1},...,G_{r}=F$ satisfying $G_{i}\sim_{s}G_{i+1}$
or $G_{i}\sim_{e}G_{i+1}$ for $i=0,...,r-1$.
\end{theorem}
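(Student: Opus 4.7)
The plan is to prove the two implications separately, using the Williams-type classification for edge shifts stated just above as the core input.

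For the ``if'' direction, I would check that each elementary move preserves the topological equivalence class of the SSFT. A splitting/amalgamation $G \sim_s G'$ yields topologically conjugate edge shifts by the preceding theorem, and hence, under the unit-roof suspension, topologically conjugate SSFT (in particular topologically equivalent). An expanding move $G \sim_e G'$ subdivides a single edge $e$ into a consecutive pair $e_1 e_2$; the unit-roof suspension of $G'$ is then obtained from that of $G$ by a piecewise-linear time reparametrization (replacing the unit passage along $e$ by two unit passages along $e_1, e_2$). Since topological equivalence allows arbitrary orientation-preserving reparametrization of time, this is a topological equivalence. A finite composition of such moves preserves the class.

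For the ``only if'' direction, the plan is a cross-section argument. Given a topological equivalence $h\colon \phi_G^t \to \phi_F^t$, let $\Sigma_G, \Sigma_F$ be cross-sections associated with $G, F$ whose Poincar\'e maps are the edge shifts. The image $h(\Sigma_G)$ is another cross-section for $\phi_F^t$, and I would deform $\Sigma_F$ into $h(\Sigma_G)$ through a finite sequence of elementary cross-section modifications: pushing a transverse disc across one Poincar\'e passage corresponds to an expanding surgery $\sim_e$ on the edge graph, while coalescing or splitting discs of the cross-section corresponds to a state amalgamation/splitting $\sim_s$. Once the cross-sections coincide, the induced conjugacy of return maps decomposes into $\sim_s$ moves by the Williams-type theorem.

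The main obstacle is showing that $h$ can be realized by \emph{finitely many} elementary cross-section moves: a priori the deformation from $\Sigma_F$ to $h(\Sigma_G)$ could require unbounded pushing along orbits. The key point is that the Markov structure, together with compactness of phase space, bounds the Poincar\'e return times uniformly, reducing the deformation to finitely many local adjustments. The Bowen--Franks algebraic invariants of \cite{F4} provide a consistent bookkeeping to ensure that no intermediate move escapes the flow-equivalence class.
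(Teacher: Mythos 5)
The paper itself gives no proof of this statement: it is quoted from the literature (Parry--Sullivan \cite{PaS} for the classification up to topological equivalence, Williams \cite{Wi1} for the conjugacy part, restated in edge-graph language following \cite{LM}). Your overall strategy is the standard one, and your ``if'' direction is sound: a splitting or amalgamation gives conjugate edge shifts and hence equivalent suspensions, and an expanding surgery amounts to changing the roof function over one cylinder set, which is a topological equivalence of the suspension flows.

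The ``only if'' direction, however, has a genuine gap at precisely the step you flag as the main obstacle, and your proposed resolution does not close it. A uniform bound on Poincar\'e return times does not by itself show that the deformation of $\Sigma_F$ onto $h(\Sigma_G)$ factors into finitely many steps each of which is an expanding surgery or a state splitting on the symbolic level; one must also arrange that every intermediate family of discs is again a Markov cross-section, and that is the entire content of the Parry--Sullivan argument (it is exactly what Lemma \ref{lemma3.1} of this paper quotes from \cite{PaS}). The appeal to the algebraic invariants of \cite{F4} is a non sequitur: the Bowen--Franks group is an invariant \emph{derived from} this theorem (Franks' classification is built on top of the Parry--Sullivan moves), and in any case the elementary moves preserve the equivalence class by construction, so no ``bookkeeping'' is needed and none of this addresses finiteness. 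A cleaner way to finish is to avoid deforming one cross-section into the other: perturb $\Sigma_F$ and $h(\Sigma_G)$ to be disjoint, note that their union is again a cross-section whose return map is related to each of the two original edge shifts by finitely many expanding surgeries together with splittings (each orbit segment between consecutive returns to one section meets the other section a bounded number of times, by compactness), and then apply the Williams theorem stated just above to the two resulting conjugate edge shifts.
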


 Here $G_{i}\sim_{s}G_{i+1}$ means that $G_{i+1}$ can be obtained from $G_{i}$ by a splitting or
 an amalgamation. $G_{i}\sim_{e}G_{i+1}$ means that $G_{i+1}$ can be obtained from $G_{i}$ by an
expanding surgery or the converse of an expanding surgery.

  \subsection{NS flows on three manifolds} \label{section2.3}

  \begin{definition}
 A smooth flow $\phi_{t}$ on
a compact manifold $M$ is called a \textit{Smale flow} if:
\begin{enumerate}

\item the chain recurrent set $R(\phi_{t})$ has hyperbolic structure;

\item  $\dim (R(\phi_{t}))\leq 1$;

\item $\phi_{t}$ satisfies the transverse condition.
\end{enumerate}

  If a Smale flow $\phi_{t}$ has no singularity, we call
  $\phi_{t}$ a \textit{nonsingular Smale flow} (an \textit{NS flow}).
  In particular, if $R(\phi_{t})$ consists entirely of closed
  orbits, $\phi_{t}$ is called a \textit{nonsingular Morse Smale flow}
  (an \textit{NMS flow}).
\end{definition}

The following theorem is due to Smale \cite{Sm} and Bowen \cite{Bo}.

\begin{theorem} \label{theorem2.8}
The chain recurrent set $R(\phi_{t})$ of an NS flow $\phi_{t}$ on a
compact manifold $M$ satisfies:
\begin{enumerate}

\item  (Spectral decomposition) $R(\phi_{t})=\Lambda_1\sqcup ... \sqcup \Lambda_n$.
Here each $\Lambda_i$ ($i=1,..,n$) is called a \textit{basic set},
    i.e., $\Lambda_i$ is an invariant subset of $R(\phi_{t})$ and is a closure of an orbit;

 \item  $\Lambda_i$ is a closed orbit or $\Lambda_i$ satisfies that $\phi_{t}$ restricted to $\Lambda_i$ is topologically equivalent to
an SSFT.
\end{enumerate}
\end{theorem}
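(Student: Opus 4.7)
Theorem~\ref{theorem2.8} has two components: finiteness of the spectral decomposition in (1), and the dichotomy in (2) between closed orbits and suspensions of SSFTs. Since these are attributed to Smale and Bowen respectively, I would follow their classical strategies rather than attempt a novel approach.

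For part~(1), my plan is to adapt Smale's discrete-time spectral decomposition argument to the flow setting. Since $R(\phi_t)$ is a compact hyperbolic invariant set, the stable/unstable manifolds of its orbits provide a local product structure. I would partition $R(\phi_t)$ into chain equivalence classes $\Lambda_1,\dots,\Lambda_n$, where $x\sim y$ iff each $\varepsilon$-chain-reaches the other for every $\varepsilon>0$. The local product structure together with expansivity implies that each class is relatively open in $R(\phi_t)$, and compactness then forces the number of classes to be finite. Invariance and closedness are automatic. The fact that each $\Lambda_i$ is the closure of a single orbit (topological transitivity) follows by combining chain transitivity inside $\Lambda_i$ with the Anosov closing/shadowing lemma for hyperbolic flows.

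For part~(2), I would split on $\dim \Lambda_i$. If $\dim\Lambda_i=0$, then $\Lambda_i$ is a compact, transitive, zero-dimensional hyperbolic invariant set of a nonsingular flow; since orbits are $1$-dimensional and $\Lambda_i$ contains a dense orbit, $\Lambda_i$ must reduce to a single closed orbit. If $\dim\Lambda_i=1$, I would invoke the Bowen--Walters construction. The steps are: (i) produce a finite family of disjoint disks $D_1,\dots,D_m$ transverse to $\phi_t$ whose union meets every orbit of $\Lambda_i$ and such that the first-return map on $\Lambda_i\cap\bigcup D_k$ admits a Markov partition; (ii) use the itinerary map associated with this partition to conjugate the first-return map to a subshift of finite type $\sigma_A$; and (iii) recognize $\phi_t\restriction_{\Lambda_i}$ as the suspension of $\sigma_A$ under the return-time roof function, which is a continuous positive function on $\Sigma_A$.

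The main obstacle is step~(i): constructing the Markov cross-section for a flow. This is substantially more delicate than the diffeomorphism case because one must simultaneously control the geometry of the transverse disks (so they respect the stable/unstable foliations in a product-like way across overlaps) and the return-time function (so the itinerary map is well defined and continuous). The Bowen--Walters strategy is to start with small rectangle neighborhoods centered at a spanning set of periodic orbits, iterate them using the specification property to cover $\Lambda_i$, and then cut along stable and unstable leaves to force the Markov overlap condition. Once the Markov cross-section is built, steps (ii) and (iii) are essentially symbolic bookkeeping.
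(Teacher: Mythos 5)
The paper does not prove Theorem~\ref{theorem2.8} at all: it is stated as a classical result and attributed to Smale \cite{Sm} and Bowen \cite{Bo}, so there is no in-paper argument to compare against. Your outline is essentially the standard proof from those references --- chain classes plus local product structure and shadowing for the spectral decomposition, and the Bowen--Walters Markov cross-section followed by the itinerary/suspension construction for the symbolic model --- and as a plan it is sound; the genuinely hard analytic content is, as you say, concentrated in building the Markov cross-section for the flow.

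One point needs correcting. Your case split in part~(2) on $\dim\Lambda_i=0$ versus $\dim\Lambda_i=1$ is miscalibrated: for a \emph{nonsingular} flow every nonempty invariant set contains a full orbit and hence has topological dimension at least $1$; in particular a closed orbit is $1$-dimensional, so the case $\dim\Lambda_i=0$ is vacuous and your sentence ``$\Lambda_i$ contains a dense orbit, so it must reduce to a single closed orbit'' sits in the wrong branch. The dichotomy in the theorem is not by dimension but within the $1$-dimensional case: either $\Lambda_i$ is a single (isolated) closed orbit, or it is an infinite $1$-dimensional basic set, which locally looks like (Cantor set) $\times$ (interval), and it is to the latter that the Bowen--Walters construction is applied to produce the suspension of a nontrivial subshift of finite type. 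This is a bookkeeping error rather than a fatal gap --- a single closed orbit is in any case the suspension of a one-symbol shift, so your step (i)--(iii) machinery subsumes it --- but as written the $\dim=0$ branch is self-contradictory and should be removed.
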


Now we introduce the descriptions of basic sets of Smale flows on
3-manifolds by F. Beguin and C. Bonatti \cite{BB}.

Let $\phi_{t}$ and $\varphi_{t}$ be two smooth flows with compact
invariant sets $K$ and $L$ respectively.
 $(\phi_{t},K)$ and $(\varphi_{t},L)$ are said to be equivalent
 if and only if there exists a neighborhood $U$ of $K$ and a
neighborhood $V$ of $L$ such that the restrictions of $\phi_{t}$ to
$U$ and $\varphi_{t}$ to $V$ are topologically equivalent via a
homeomorphism sending $K$ to $L$. The \textit{germ} $[\phi_{t},K]$
is the equivalence class represented by $(\phi_{t},K)$.

Let $\phi_{t}$ be a smooth flow on a closed orientable 3-manifold
$M$ and let $K$ be a saddle set of $\phi_{t}$. By a \emph{model} of
the germ  $[\phi_{t},K]$, we mean a pair $(\varphi_{t},N)$,  where
$N$ is a compact orientable 3-manifold and $\varphi_{t}$ is a smooth
flow on $N$ transverse to the boundary, such that:
\begin{enumerate}
\item The maximal invariant set for $\varphi_{t}$ in $N$ is a saddle
set $K_{\varphi_{t}}$ such that the germ
$[\varphi_{t},K_{\varphi_{t}}]$ is equal to the germ $[\phi_{t},K]$.
\item Denote by $\partial_{1} N$ the union of the connected
components of the boundary of $N$ where $\varphi_{t}$ is coming into
$N$; then any circle embedded in $\partial_{1} N$ and disjoint with
$W^{s}(K_{\varphi_{t}})$ bounds a disc in $\partial_{1} N$ which is
also disjoint with $W^{s}(K_{\varphi_{t}})$.
\item Any connected component of $N$ contains at least one point of
$K_{\varphi_{t}}$.
\end{enumerate}

\begin{theorem}
Given a saddle basic set $K$ of an NS flow $\phi_{t}$ on a closed
orientable 3-manifold $M$, there exists a unique (up to topological
equivalence) model $(\varphi_{t},N)$ of the germ $[\phi_{t},K]$.
\end{theorem}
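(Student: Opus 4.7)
The plan is to handle existence and uniqueness separately; both arguments rest on using the flow structure of $\phi_t$ to reduce the problem to a normalization that is governed by the dynamics near $K$.

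For existence, I would first extract a preliminary filtrating neighborhood of $K$ via a Lyapunov function. Since $\phi_t$ is an NS flow and $K$ is one of the basic sets $\Lambda_i$ in its spectral decomposition (Theorem \ref{theorem2.8}), the fundamental theorem of dynamical systems supplies a smooth function $f$ on $M$ that is strictly decreasing along orbits outside $R(\phi_t)$ and constant on each $\Lambda_i$. Choosing regular values $a<b$ close to $f(K)$ so that $K$ is the only basic set in $N_0=f^{-1}([a,b])$, the flow $\phi_t$ is transverse to $\partial N_0$ and its maximal invariant set in $N_0$ is exactly $K$; this gives a pair representing $[\phi_t,K]$ satisfying condition (1). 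Condition (3) is arranged by discarding the components of $N_0$ disjoint from $K$. To achieve (2), I would inspect $\partial_1 N_0$ and remove, one at a time, every simple closed curve $c$ disjoint from $W^s(K)$ that fails to bound a disc in $\partial_1 N_0$ disjoint from $W^s(K)$: such a $c$ flows in $N_0$ along an embedded annulus $A$ to a curve $c'\subset\partial_2 N_0$, and $A$ avoids $W^s(K)\cup W^u(K)$, so a thin flowbox neighborhood of $A$ can be either excised from $N_0$ or filled in, producing a new filtrating neighborhood with a strictly simpler pattern of $W^s(K)\cap\partial_1 N$. Finitely many such surgeries terminate at a model $(\phi_t,N)$ verifying (1)--(3).

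For uniqueness, let $(\varphi_t,N)$ and $(\varphi'_t,N')$ be two models of $[\phi_t,K]$. By definition of the germ, there is a homeomorphism $h_0\colon U\to U'$ between open neighborhoods of the invariant sets in $N$ and $N'$ that sends $\varphi$-orbits to $\varphi'$-orbits. Push $h_0$ along the flow: for $x\in N$ on an orbit meeting $U$ at some time $t$, set $h(x)=\varphi'_{-t}\!\bigl(h_0(\varphi_t(x))\bigr)$. This yields a well-defined flow-preserving homeomorphism from the $\varphi_t$-saturation $\hat U\subset N$ of $U$ onto the corresponding saturation in $N'$. In particular $h$ is defined on $W^s(K)\cup W^u(K)$ and near the traces of these manifolds on $\partial N$. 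The remaining orbits are those entering through $\partial_1 N$ and exiting through $\partial_2 N$ without meeting $U$. Here condition (2) is the key: each connected component $D$ of $\partial_1 N\setminus W^s(K)$ is a planar surface, and because $D$ avoids $W^s(K)$ its forward orbit sweeps out a product flowbox $D\times[0,1]\subset N$ that is mapped homeomorphically onto a component of $\partial_2 N\setminus W^u(K)$. Applying (2) and (3) symmetrically to $N'$ then matches these product pieces componentwise with those of $N'$, and $h$ extends by a product homeomorphism on each flowbox, agreeing with the flow-pushed definition on the boundary with $\hat U$.

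The main obstacle is the last step, namely showing that condition (2) really forces each complementary component of $W^s(K)$ in $\partial_1 N$ to sweep as a trivial flowbox across $N$, and that the combinatorial pairing of components of $\partial_1 N\setminus W^s(K)$ with components of $\partial_2 N\setminus W^u(K)$ is dictated entirely by the germ $[\phi_t,K]$. Once this matching is established, patching $h_0$ on $\hat U$ with the product homeomorphisms on the flowbox pieces yields the global topological equivalence $N\to N'$ in a routine way.
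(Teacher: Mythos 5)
This theorem is stated in the paper as a quoted result of B\'eguin--Bonatti \cite{BB}; the paper contains no proof of its own, so your proposal can only be measured against the original argument. Your skeleton is the right one: a Lyapunov filtration to produce a first filtrating neighborhood, surgery along the annuli swept out by offending curves of $\partial_1 N_0$ to enforce condition (2) (the inverse of the handle attachment of Proposition \ref{proposition2.11}), and, for uniqueness, saturating the germ equivalence along the flow and analyzing the orbits that cross $N$ without meeting $U$. Your observation that condition (2) forces every component of $\partial_1 N\setminus W^s(K)$ to be an open planar (indeed, disc) region carried homeomorphically by the flow onto a component of $\partial_2 N\setminus W^u(K)$ is correct and consistent with Theorem \ref{theorem4.1}. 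But two essential steps are missing, and they are the ones carrying the real content.

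First, in the existence part, termination of the surgery process is asserted, not proved. Cutting along the annulus over an essential curve $c$ and capping with two solid cylinders lowers the genus of $\partial_1 N$, but when $c$ does bound discs in $\partial_1 N$ and merely every such disc meets $W^s(K)$, the surgery splits off a new sphere component still carrying part of the lamination $W^s(K)\cap\partial_1 N$, and no evident complexity decreases; you need an actual complexity function and a proof that new offending curves are not created. Second, and more seriously, the uniqueness argument halts exactly at the crux. The formula $h(x)=\varphi'_{-t}\bigl(h_0(\varphi_t(x))\bigr)$ is not well defined, because a topological equivalence sends orbits to orbits but does not intertwine the time parameter, so different admissible $t$ give different values; this is repairable (via cross-sections or a reparametrization), but it must be done. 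More importantly, the step you yourself flag as ``the main obstacle'' --- that the pairing of the infinitely many disc components of $\partial_1 N\setminus W^s(K)$ with those of $\partial_2 N\setminus W^u(K)$, and the way the \emph{closures} of the corresponding flowboxes (which are not compact products: exit times blow up near $W^s(K)$, so these closures meet $K$ itself) attach to the saturation of $U$, are dictated by the germ alone --- is precisely the theorem. Since Proposition \ref{proposition2.11} shows that without condition (2) many non-homeomorphic manifolds realize the same germ, uniqueness cannot follow from ``routine'' patching; the identification of those flowbox closures is where condition (2) must do essential work, and that argument is absent from the proposal.
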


\begin{theorem}
Let $\phi_{t}$ and $\varphi_{t}$ be two Smale flows on two
orientable 3-manifolds $M$ and $N$. Let $K$ and $L$ be two saddle
basic sets of $\phi_{t}$ and $\varphi_{t}$ respectively. If the
models of $[\phi_{t},K]$ and $[\varphi_{t},L]$ are topologically
equivalent, then there exist invariant neighborhoods $U$ of $K$ and
$V$ of $L$ such that the restriction of $\phi_{t}$ to $U$ is
topologically equivalent to the restriction of $\varphi_{t}$ to $V$.
\end{theorem}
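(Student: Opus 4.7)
\medskip
\noindent\textbf{Proof proposal.}
The plan is to chain together four germ-level topological equivalences: the two that relate $\phi_t$ and $\varphi_t$ to their models, the one furnished by the hypothesis, and their composition. Because every map produced along the way sends orbits to orbits and invariant sets to invariant sets, invariant neighborhoods will push forward and pull back to invariant neighborhoods, so obtaining the desired $U$ and $V$ amounts to a careful bookkeeping of domains.

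First I would unpack the definition of a model. Writing $(\varphi^{(1)}_t,N_1)$ for the model of $[\phi_t,K]$ with maximal invariant set $K_1$, the equality of germs $[\varphi^{(1)}_t,K_1]=[\phi_t,K]$ supplies open neighborhoods $U_0\subset M$ of $K$ and $\widetilde U_0\subset N_1$ of $K_1$ together with a topological equivalence $h_1:U_0\to\widetilde U_0$ with $h_1(K)=K_1$. Similarly, from $(\varphi^{(2)}_t,N_2)$ being a model of $[\varphi_t,L]$ we obtain $V_0\subset N$, $\widetilde V_0\subset N_2$, and a topological equivalence $h_2:V_0\to\widetilde V_0$ with $h_2(L)=K_2$. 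The hypothesis provides a topological equivalence $H:N_1\to N_2$ between the models, which by definition sends $K_1$ to $K_2$.

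Next I would build the candidate equivalence $F:=h_2^{-1}\circ H\circ h_1$ and identify its actual domain. Set $U:=h_1^{-1}\bigl(\widetilde U_0\cap H^{-1}(\widetilde V_0)\bigr)$; this is an open neighborhood of $K$ on which $F$ is a well-defined topological equivalence onto an open neighborhood of $L$, sending $K$ to $L$. To upgrade $U$ to an invariant open neighborhood, I would pass to the flow-saturation (the union of the full $\phi_t$-orbits through $U$ that remain inside $U$); alternatively, one uses the fact that the models $N_i$ are themselves filtrating, hence invariant in the ambient sense, so one can first shrink $\widetilde U_0$ and $\widetilde V_0$ to the interiors of the models and then pull back, obtaining canonical flow-invariant neighborhoods. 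Putting $V:=F(U)$ finishes the construction.

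The only subtlety, and hence the main technical point to watch, is ensuring that the two shrinkings (one for compatibility with $H$, one for invariance) do not leak away from $K$: one must verify that the intersection $\widetilde U_0\cap H^{-1}(\widetilde V_0)$ still contains $K_1$, which is clear because $H(K_1)=K_2\in\widetilde V_0$, and that saturating by the flow does not enlarge $U$ beyond the domain of definition of $h_1$, which is handled by replacing $\widetilde U_0$ with a small filtrating sub-neighborhood of $K_1$ inside the model $N_1$ before pulling back. Once this is done, $F|_U$ is the required topological equivalence, so the theorem follows essentially by transitivity of the germ relation.
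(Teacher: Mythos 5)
You should first be aware that the paper does not prove this statement at all: it is quoted verbatim as background from B\'eguin--Bonatti \cite{BB}, so there is no in-paper argument to compare against. Measured against the statement itself, the core of your proposal is the right idea and is essentially the standard one: compose the germ-to-model equivalence $h_1$, the model-to-model equivalence $H$, and $h_2^{-1}$, restrict to the common domain $h_1^{-1}\bigl(\widetilde U_0\cap H^{-1}(\widetilde V_0)\bigr)$, and conclude that the germs $[\phi_t,K]$ and $[\varphi_t,L]$ are equal. (One small point: $H(K_1)=K_2$ is not literally ``by definition'' of topological equivalence; it follows because an equivalence of the flows on $N_1$ and $N_2$ must carry the maximal invariant set of $N_1$ to that of $N_2$. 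Worth a sentence.)

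The genuine gap is the upgrade from ``some neighborhoods'' to ``invariant neighborhoods,'' and your first proposed mechanism for it fails outright. The set you describe --- ``the union of the full $\phi_t$-orbits through $U$ that remain inside $U$'' --- is the maximal invariant subset of $U$, which for a saddle basic set of a Smale flow is one-dimensional (indeed equal to $K$ once $U$ is isolating): every point off the local stable and unstable sets eventually leaves $U$, so this set has empty interior and is not a neighborhood of $K$. Your second mechanism conflates ``filtrating'' with ``flow-invariant'': a filtrating neighborhood is an isolating block whose boundary the flow crosses transversally, not a set saturated by the flow, and the models $N_i$ are abstract compact manifolds with boundary, not invariant subsets of $M$ or $N$, so ``invariant in the ambient sense'' does not apply to them. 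To close the gap you must first fix what ``invariant neighborhood'' means here. If (as in B\'eguin--Bonatti's usage) it means an isolating/filtrating neighborhood, the repair is easy and your second route works once rephrased: every neighborhood of a hyperbolic basic set contains a filtrating neighborhood, so shrink inside $\widetilde U_0\cap H^{-1}(\widetilde V_0)$ to a filtrating neighborhood of $K_1$ and pull back by $h_1$. If instead literal flow-saturation is required, you would need an actual extension argument propagating $F$ along orbits leaving a filtrating $U$ (using that each orbit meets $U$ in a single interval), and that argument is currently missing.
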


A filtrating neighborhood of $K$ is a neighborhood $U$ such that:
\begin{enumerate}

\item $K$ is the maximal invariant set (for $\phi_{t}$) in $U$.
 \item The intersection of any orbit of $\phi_{t}$ with $U$ is
 connected (in other terms, any orbit getting out of $U$ never comes
 back).
\end{enumerate}

Let $(\varphi_{t},N)$ be a model of the germ $[\phi_{t},K]$. Let
$D_{1}$ and $D_{2}$ be two closed disjoint discs contained in the
entrance boundary of $N$ such that the orbits of $D_{1}$ and $D_{2}$
leave $N$ in a finite amount of time. The orbits of these two discs
form the union of two cylinders $D^{2}\times [0,1]$ endowed with the
vector-field $\frac{\partial}{\partial t}$. Let us cut out of $N$
these two cylinders and then paste the two resulting tangent
boundary components. By doing so, we have added to $N$ a handle,
getting a new 3-manifold still endowed with a vector-field. This
surgery is called \textit{handle attachment}.

\begin{proposition}\label{proposition2.11}
 If $U$ is
a filtrating neighborhood of $K$ such that all connected components
of $U$ meet $K$, one can obtain a pair topologically equivalent to
$(U,K)$ by modifying $(\varphi_{t},N)$ by a finite number of handle
attachments of the above type.
\end{proposition}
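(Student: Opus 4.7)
The plan is to realize a copy $\tilde{N}$ of the model inside $U$, decompose the complement into finitely many product flow-tubes, and recognize each non-trivial tube as an instance of the cut-and-paste surgery described before the statement.

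First, using the germ equality $[\phi_t,K]=[\varphi_t,K_{\varphi_t}]$ and the uniqueness of the model up to topological equivalence, I would construct a compact submanifold $\tilde N\subset U$ realizing the model structure: $\phi_t$ transverse to $\partial\tilde N$, $K$ the maximal invariant set in $\tilde N$, and $\partial_1\tilde N$ satisfying the disc-bounding condition of the model. Concretely, one starts with the germ-level flow-preserving homeomorphism from a small neighborhood of $K_{\varphi_t}\subset N$ to a neighborhood of $K\subset U$, then pushes the boundary outward along the flow until it becomes transverse and the simplicity condition on the entrance side holds. By uniqueness, $(\phi_t|_{\tilde N},\tilde N,K)$ is topologically equivalent to $(\varphi_t,N,K_{\varphi_t})$.

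Second, analyze $W:=\overline{U\setminus\mathrm{int}(\tilde N)}$. Since $K\subset\mathrm{int}(\tilde N)$ is the only $\phi_t$-invariant set in $U$, the region $W$ has no invariant points, and by compactness each orbit's intersection with $W$ has uniformly bounded length. The time-of-flight function trivializes each connected component of $W$ as a product $S_i\times[0,1]$ with flow $\partial/\partial t$, where $S_i\times\{0\}\subset\partial_0\tilde N\sqcup\partial_1 U$ and $S_i\times\{1\}\subset\partial_1\tilde N\sqcup\partial_0 U$. Crucially, $\tilde N$ need not be a filtrating neighborhood of $K$ inside $U$; orbits may exit $\tilde N$ and re-enter through non-trivial ``handle'' tubes of $U$. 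Classify components by where their ends sit: (a) both ends on $\partial\tilde N$ (exit-to-entrance handles); (b) one end on $\partial\tilde N$ and one on $\partial U$ (product collars); (c) both ends on $\partial U$, which are components of $U$ disjoint from $\tilde N$ and hence from $K$, excluded by the hypothesis.

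Type-(b) collars are absorbable into $\tilde N$ by extending the flow-preserving homeomorphism along the collar, without changing the equivalence class of $(U,K)$. For each type-(a) tube, subdivide $S_i$ by a finite system of properly embedded arcs into discs, expressing it as a finite concatenation of disc-handles $D^2\times[0,1]$; each such disc-handle attachment is topologically and flow-theoretically equivalent to the surgery of the definition (remove two flow-tubes from $\tilde N$ and paste their tangent lateral annuli). Finiteness of the total count follows from compactness of $\partial\tilde N\sqcup\partial U$. The principal obstacle is the explicit matching between a naive disc-handle attachment and the definition's surgery, which differ superficially (additive versus cut-and-identify) but yield the same flow pair up to topological equivalence; the identification proceeds by viewing the pasted lateral annulus as a flow-invariant $2$-slice inside the attached disc-handle.
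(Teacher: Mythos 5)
First, note that the paper does not prove Proposition \ref{proposition2.11}: it is quoted verbatim from B\'eguin--Bonatti \cite{BB} as part of the preliminaries, so there is no in-paper proof to compare with. Judged on its own terms, your argument has a fatal gap at Step 1, and the difficulty you dismiss at the end as ``superficial'' is in fact the reason the strategy cannot work. You assume that the model $(\varphi_t,N)$ can be realized as a compact codimension-zero submanifold $\tilde N\subset U$ with boundary transverse to the flow and with $K$ as its maximal invariant set. If such a $\tilde N$ existed, then (as your own Step 2 essentially shows) the flow would be transverse to all of $\partial\tilde N\sqcup\partial U$, so each component of $\overline{U\setminus\tilde N}$ would be a product $\Sigma\times[0,1]$ over a \emph{closed} surface $\Sigma$ equal to a whole component of the exit boundary of $\tilde N$ or of the entrance boundary of $U$ (the set of exit points of $\tilde N$ whose orbits return to $\tilde N$ is open and closed in $\partial\tilde N$, because $U$ is filtrating). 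Hence $U$ would be $\tilde N$ with some exit components glued homeomorphically to some entrance components, and in particular every component of $\partial U$ would be homeomorphic to a component of $\partial N$. This is false in general: a single handle attachment with $D_1,D_2$ in the same entrance component raises the genus of that boundary component by one. Concretely, for the Lorenz germ the model has only spherical boundary components (Theorem \ref{theorem4.3}: thickened template plus $2$-handles), while the filtrating neighborhoods of Example \ref{example4.7} have torus boundary components; no flow-equivariant copy of the model sits inside them. The relation between $N$ and $U$ is a cut-and-reglue surgery, not an inclusion, so ``push the boundary outward until the simplicity condition holds'' cannot succeed.

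Two secondary problems follow from the same misconception: the surfaces $S_i$ in your type-(a) tubes are closed, so they cannot be ``subdivided by properly embedded arcs into discs''; and gluing a product over a whole closed boundary component is not a composition of the handle attachments defined before the statement, which specifically excise two disc-cylinders and identify their lateral annuli. The correct argument (the one in \cite{BB}) runs in the opposite direction: starting from $U$, if condition (2) of the model fails one finds an essential circle $c\subset\partial_1 U$ disjoint from $W^s(K)$; since $c\cap W^s(K)=\emptyset$ its forward orbit sweeps out a flow-saturated annulus $A$ from the entrance to the exit boundary; cutting $U$ along $A$ and capping the two copies of $A$ with disc-cylinders $D^2\times[0,1]$ is exactly the inverse of a handle attachment and strictly decreases a complexity of $\partial_1 U$, so finitely many such cuts reduce $U$ to the model, and reversing them proves the proposition. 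If you want to keep an ``upward'' proof you must build $U$ from $N$ by reversing that induction, not by embedding $N$ into $U$.
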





\section{Template and Germ} \label{section3}
If we extend a template $T$ in the direction perpendicular to its
surface, we obtain a thickened template  $\overline{T}$. The
semi-flow on $T$ extends to a flow on $\overline{T}$, see Figure
\ref{figure6}.  $\partial \overline{T}$ is composed of the entrance
set $X$, the exit set $Y$ and the dividing curves set $C$. More
details about thickened templates can be found in \cite{Me}. Denote
$\varphi^{T}_{t}$ by the flow on $\overline{T}$ and $K_{T}$ by the
invariant set of $\varphi^{T}_{t}$.
\begin{figure}[htp]
\begin{center}
  \includegraphics[totalheight=2.5cm]{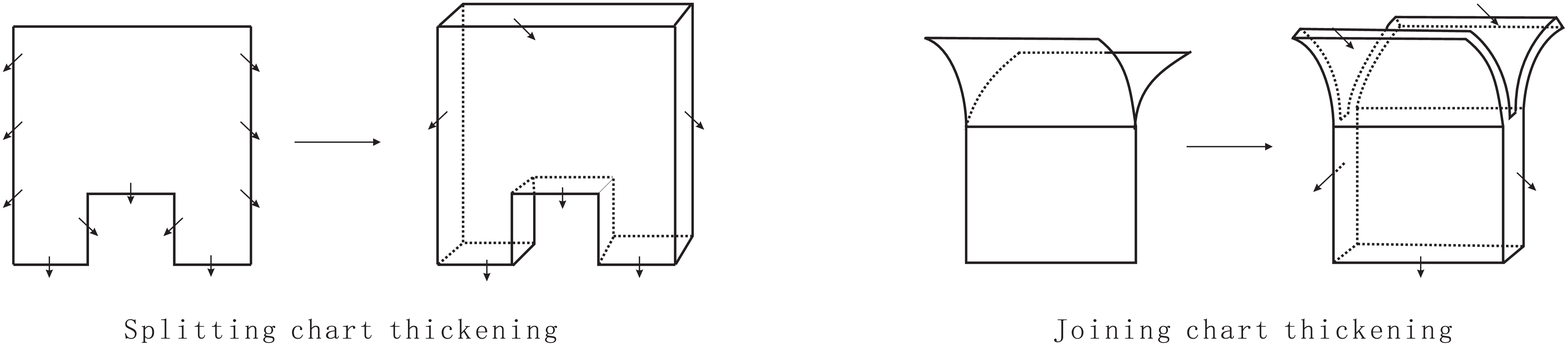}\\
  \caption{}\label{figure6}
  \end{center}
\end{figure}

 Then, we can define a germ $[\varphi^{T}_{t}, K_{T}]$ for a
 template $T$. On the other hand, by Theorem \ref{theorem2.1} (the template theorem), any germ can be
 represented by a template.

 By the proof of Theorem 2.5 in \cite{PaS}, we have the
following lemma.

\begin{lemma} \label{lemma3.1}
Up to isotopic equivalence, different cross-sections can be
exchanged by a finite sequence of unstable direction divisions (or
their converses), stable direction divisions (or their converses)
and adding parallel cross sections (or their converses).
\end{lemma}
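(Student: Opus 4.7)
The plan is to follow the strategy of Parry--Sullivan's proof of Theorem 2.5 in \cite{PaS}, which classifies SSFT up to topological equivalence via elementary combinatorial operations, and to translate each such operation into one of the three geometric moves on cross-sections listed in the statement.

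First, given two Markov cross-sections $\Sigma_1$ and $\Sigma_2$ of the basic set $B$, each induces a Markov partition and hence an SSFT conjugate to the restriction of $\phi_t$ to $B$. Since both SSFT represent the same flow, Parry--Sullivan's theorem supplies a finite chain of elementary matrix moves relating the two transition matrices, and the aim is to lift this algebraic chain to a sequence of cross-section moves up to isotopic equivalence.

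The heart of the argument is to realize each elementary move geometrically. Refining a disc of a cross-section along a local arc of unstable manifold corresponds to an unstable direction division, while refining along a local arc of stable manifold corresponds to a stable direction division; between them, these suffice to implement the Williams edge-graph splittings (both out-splittings and in-splittings), since such splittings amount to refining the Markov partition one iteration forward or backward. The remaining elementary operation, which essentially replaces a rectangle by its flow image in the coding, is realized by first inserting a flow-translate via the ``adding parallel cross-section'' move and then applying a converse move to the original disc.

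Concretely, to link $\Sigma_1$ and $\Sigma_2$, one produces a common refinement as follows. First use ``adding parallel cross-sections'' to flow each disc of $\Sigma_1$ until it lies in the same flow slab as $\Sigma_2$; then record the arrangement of intersections between forward and backward orbits of discs of $\Sigma_1$ and discs of $\Sigma_2$. The resulting common subdivision is reached from each $\Sigma_i$ by a finite sequence of stable and unstable direction divisions. The principal obstacle will be the verification step: checking that every elementary move appearing in a Parry--Sullivan chain can indeed be decomposed into exactly the three allowed geometric moves, and that all intermediate configurations remain valid cross-sections (still meeting every orbit of $B$ and still giving a Markov Poincar\'e return map). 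Once this is secured, iterating along the algebraic chain produces the required finite sequence of geometric moves.
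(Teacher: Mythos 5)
Your proposal takes essentially the same route as the paper, which offers no independent argument for this lemma and simply derives it from the proof of Theorem 2.5 in \cite{PaS}; your translation of the elementary Williams/Parry--Sullivan operations into unstable divisions, stable divisions and parallel cross-sections, implemented via a common refinement of the two Markov cross-sections, is exactly the intended content of that citation. The only point to keep straight is the direction of the argument: one should construct the common refinement geometrically first (so every intermediate stage is an honest embedded cross-section) rather than trying to realize an abstract algebraic chain step by step, and your ``concretely'' paragraph already does this.
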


Now we state the key lemma in this section.

\begin{lemma} \label{lemma3.2}
Let $\phi_{t}$ be a smooth flow on a 3-manifold M having a
hyperbolic chain-recurrent set which contains a dimension 1 basic
set $\Lambda$. Every two templates induced by $\Lambda$ can be
exchanged by a finite sequence of template moves.
\end{lemma}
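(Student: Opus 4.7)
The plan is to reduce the equivalence of templates to the equivalence of cross-sections granted by Lemma \ref{lemma3.1}, and then to check case by case that each elementary cross-section move translates into a template move (or a short composition thereof).

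First I would recall that, in the construction recalled before Remark~2.4, a template modeling $\Lambda$ is obtained by the following procedure: choose a cross-section (a finite disjoint union of discs transverse to $\phi_t$ yielding a Markov partition for the return map), thicken it to a Markov flowbox neighborhood, subdivide this neighborhood into joining and splitting flowboxes, and finally crush the neighborhood along the strong-stable foliation. The template produced depends only on the cross-section, once the flowbox decomposition is fixed up to the elementary moves used in Figures \ref{figure4} and \ref{figure5}. Thus if $T_{1}$ and $T_{2}$ are two templates induced by $\Lambda$, they come from two cross-sections $\Sigma_{1}$ and $\Sigma_{2}$, and it suffices to show that whenever $\Sigma$ and $\Sigma'$ differ by a single elementary cross-section move, the resulting templates differ by a finite sequence of template moves.

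Using Lemma \ref{lemma3.1} there are three elementary moves to analyze. For an \emph{unstable direction division}, one of the Markov discs is cut by an arc transverse to the unstable direction; after crushing the stable foliation this introduces an extra branch line of $T$ where previously the semiflow was free, which is exactly the picture of a split move of Definition \ref{definition2.3}(b). For an \emph{addition of a parallel cross-section}, an extra disc is inserted along the orbits between an outgoing face of an existing flowbox and the next incoming face; after crushing, the effect on $T$ is to slide the branch structure of $T$ past a joining region, which realizes a slide move in Figure \ref{figure2}(a). For a \emph{stable direction division}, the disc is cut transverse to the strong-stable foliation, and since the construction crushes this foliation, the resulting branched surface is canonically identified with the previous one up to subdividing and then re-amalgamating joining charts, which is a slide move composed with its inverse. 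The converse elementary moves translate into the inverse template moves by the same pictures.

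Composing over the finite sequence of elementary moves between $\Sigma_{1}$ and $\Sigma_{2}$ provided by Lemma \ref{lemma3.1} yields a finite sequence of template moves taking $T_{1}$ to $T_{2}$. The main technical obstacle I expect is the careful bookkeeping in the stable-direction case, where one must verify that crushing the stable foliation really identifies the two pre-crushing branched surfaces modulo joining-chart slides; this amounts to checking that the choice of Markov subdivision along stable leaves is immaterial after the crush, which follows from the fact that the semiflow on $T$ acts linearly on edges and the gluing maps of flowboxes respect the stable foliation. Once this is dealt with, the two remaining cases are essentially pictorial and match Figures \ref{figure2}(a) and \ref{figure2}(b) directly.
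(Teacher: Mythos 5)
Your overall strategy --- reduce to Lemma \ref{lemma3.1} and then translate each elementary cross-section move into template moves --- is the same as the paper's, but the case-by-case translation is exactly where the difficulty sits, and as written your analysis has a gap and an error. The paper inserts an intermediate combinatorial layer that you skip: crushing the unstable foliation of each template produces an edge graph (a ``template edge graph''), and the three cross-section moves of Lemma \ref{lemma3.1} correspond to in-splits, out-splits and expanding surgeries on these graphs. The crucial point is that a single unstable (or stable) direction division corresponds to a \emph{general} in-split (or out-split) at a vertex, and such a surgery is not realized by a single split move on the template. It must first be decomposed as $r_i\circ h_i\circ s_i$, where $s_i$ and $r_i$ are compositions of simple splits and expanding surgeries (realized on the template by slide moves) and $h_i$ is one split joining two template edge graphs (realized by one split move); moreover one must verify that every intermediate graph in this decomposition is again a template edge graph, so that the phrase ``template move'' makes sense at each stage (this is the content of Figures \ref{figure02}, \ref{figure03} and \ref{figure9}). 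Your assertion that an unstable direction division ``is exactly the picture of a split move'' is therefore too strong: in general it is a split move sandwiched between nontrivial sequences of slide moves, and justifying even that requires the decomposition argument.

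Second, your treatment of the stable direction division is wrong in substance. You claim that after crushing the stable foliation the new branched surface is ``canonically identified with the previous one'' up to a slide move composed with its inverse, i.e.\ essentially unchanged, and you flag verifying this as the main technical obstacle. But the edge graph of a template is determined by the template (crush the unstable foliation), and a stable direction division changes that edge graph by an out-split, which changes the presentation of the subshift; hence the template genuinely changes. In the paper this case is handled symmetrically with the unstable one: the out-split, like the in-split, decomposes into slide moves and one split move between template edge graphs. So the step you propose to check is not a bookkeeping lemma to be verified but a false statement, and the actual work of the proof --- decomposing general splits into simple splits and expanding surgeries while keeping every intermediate object a template --- is absent from your argument.
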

\begin{proof}
Let $T_{1}$ and $T_{2}$ be two templates embedded in $M$ such that
both of them model $\Lambda$. Obviously, there exist two
cross-sections of $\Lambda$ denoted by  $\mathcal {C}_1$ and
$\mathcal{C}_2$ corresponding to $T_1$ and $T_2$ respectively. If we
crush the unstable
 foliation of $T_1$ and $T_2$, we can obtain two oriented embedded graphs $G_1$ and $G_2$.
 It is easy to see that the two SSFT of the edge shifts defined by both $G_1$ and $G_2$ are topologically equivalent to
$\phi_{t}|_{\Lambda}$. Hence $G_1$ and $G_2$ can be regarded as two
edge graphs of
 $\phi_{t}|_{\Lambda}$.
Given the relation between $G_i$ and $T_i$, we call $G_i$ the edge
graph of $T_i$ for $i=1,2$. $G_1$ and $G_2$ are the so called
\emph{template edge graphs} in the sense that they can be obtained
by crushing the unstable foliations of two templates.

\begin{figure}[htp]
\begin{center}
  \includegraphics[totalheight=3cm]{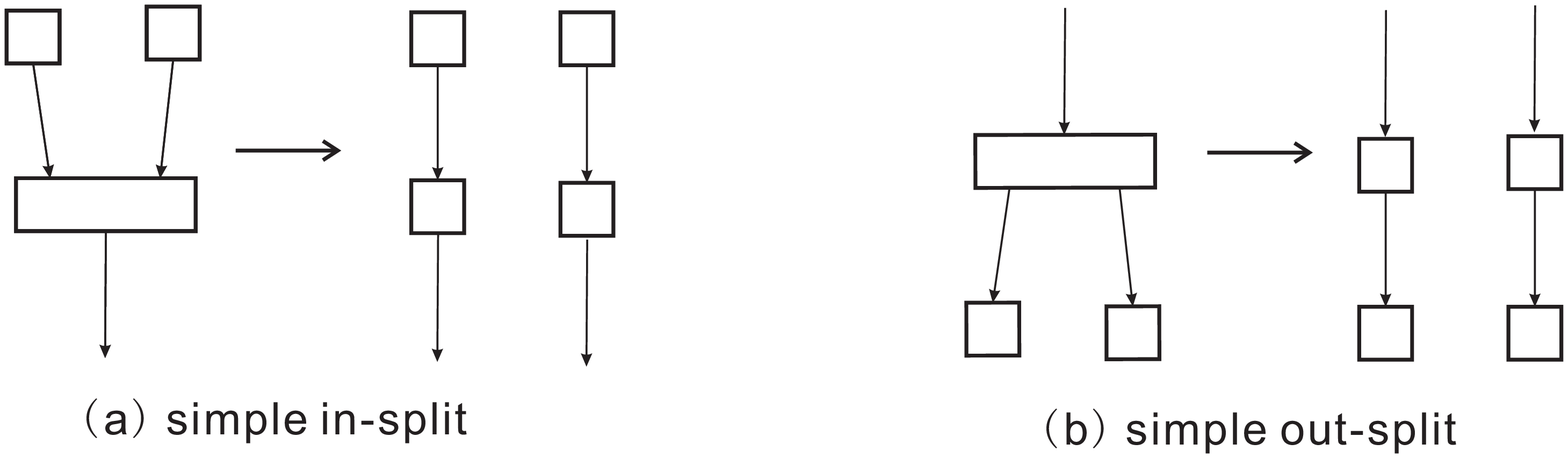}\\
  \caption{}\label{figure02}
  \end{center}
\end{figure}

\begin{figure}[htp]
\begin{center}
  \includegraphics[totalheight=7cm]{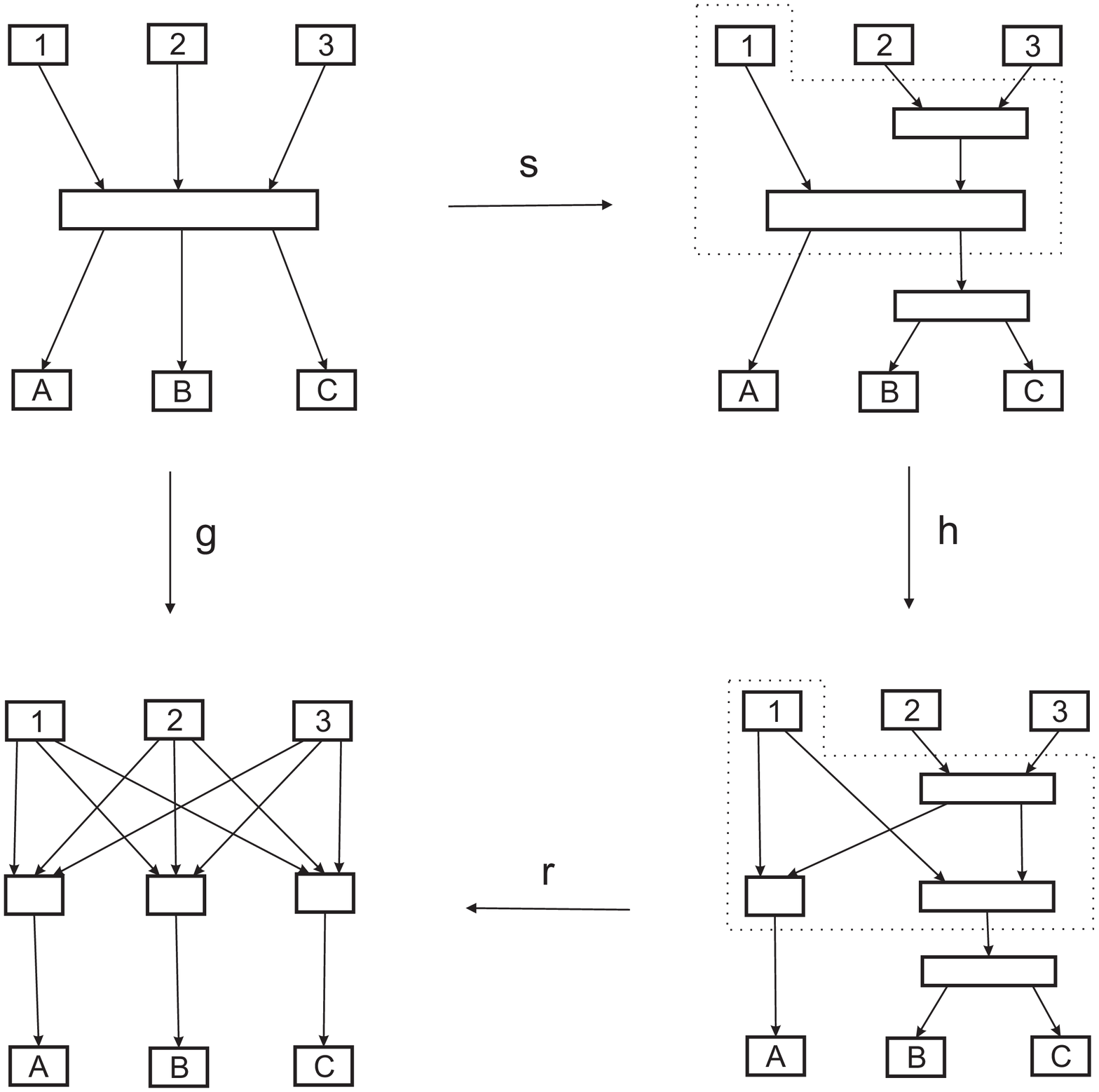}\\
  \caption{}\label{figure03}
  \end{center}
\end{figure}

By Lemma \ref{lemma3.1} and some observations, we can obtain
$\mathcal {C}_2$ from $\mathcal {C}_1$ by finitely many steps of
unstable direction divisions (or their converses), stable direction
divisions (or their converses) and adding parallel cross sections
(or their converses) such that, after each step, we obtain a new
cross-section of $\Lambda$. Each cross-section corresponds to a
Markov flowbox neighborhood (See Section \ref{section2.1} or
\cite{GHS}). By crushing the stable and unstable foliations of a
Markov flowbox neighborhood, we obtain an edge graph.

\begin{figure}[htp]
\begin{center}
  \includegraphics[totalheight=4cm]{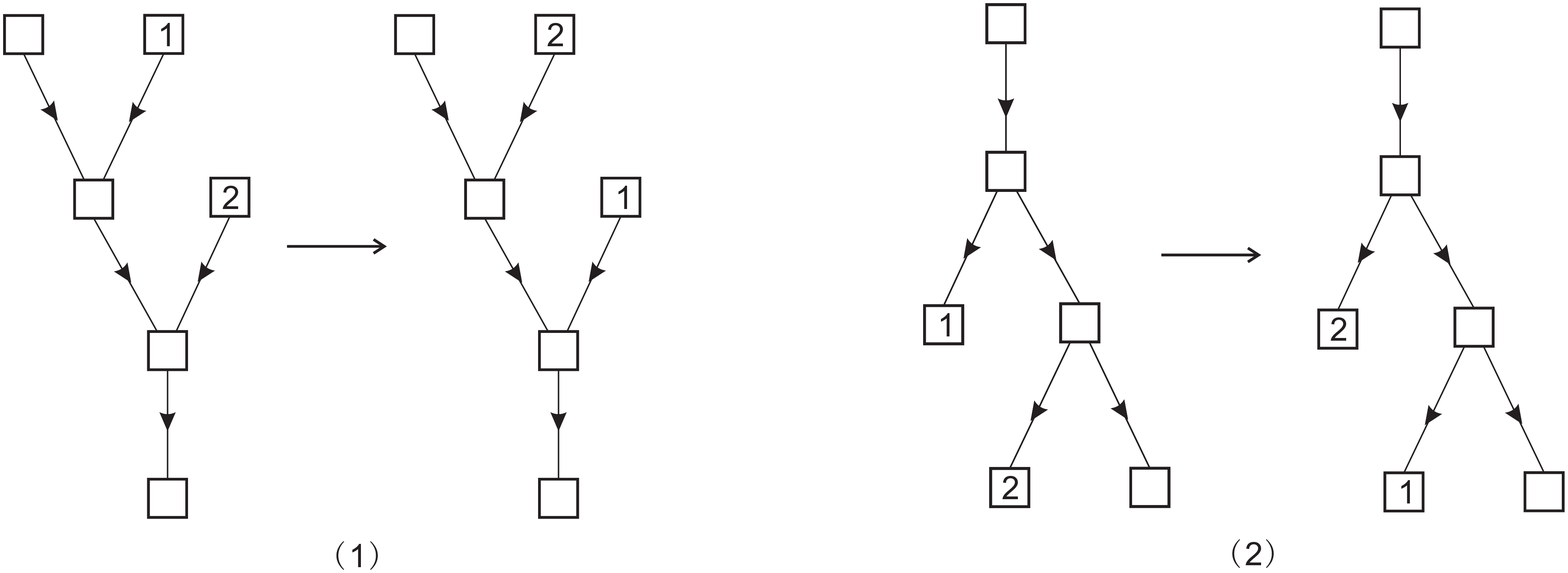}\\
  \caption{}\label{figure01}
  \end{center}
\end{figure}

\begin{figure}[htp]
\begin{center}
  \includegraphics[totalheight=6cm]{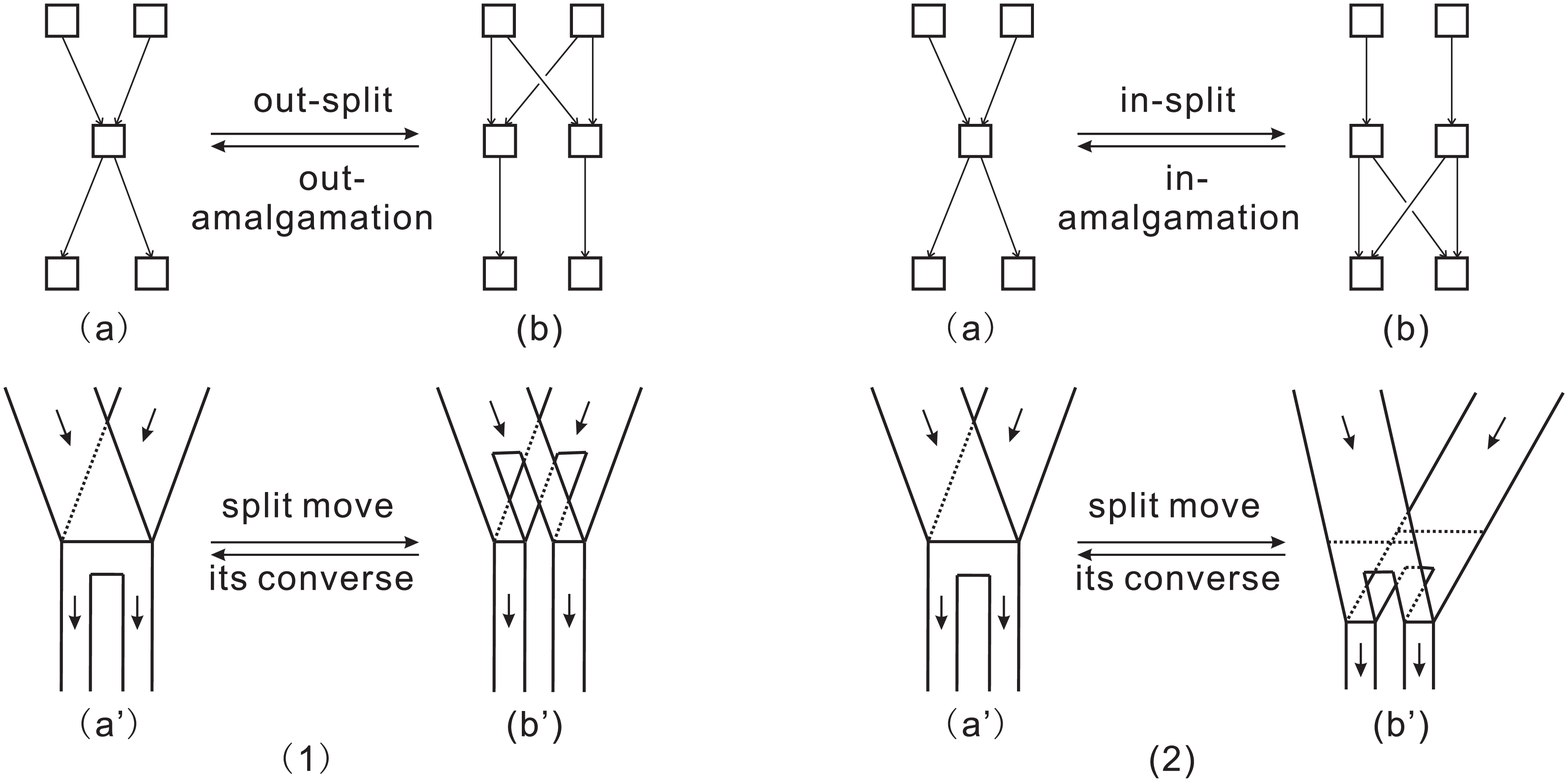}\\
  \caption{}\label{figure9}
  \end{center}
\end{figure}

Obviously, unstable direction division, stable direction division
and adding a parallel cross section on cross sections correspond to
in-split, out-split and expanding surgery on edge graphs
respectively. So, we can obtain $G_1$ from $G_2$ by $f_{n+1}\circ
g_{n}\circ f_{n}\circ ... \circ g_{1}\circ f_{1}$ ($n\in
\mathbb{N}$). Here each $f_i$ ($i=1,...,n+1$) is a composition of
finitely many steps of expanding surgery and/or its converse. and
each $g_i$ ($i=1,...,n+1$) is an out-split, an in-split or the
inverse of an out-split or in-split. For simplicity, we denote this
relation by $G_1 =f_{n+1}\circ g_{n}\circ f_{n}\circ ... \circ
g_{1}\circ f_{1} (G_2)$.

Each $g_i$ can be written as the composition $r_{i} \circ h_{i}
\circ s_{i}$ satisfying the following three conditions:
\begin{enumerate}
\item Both $s_{i}\circ f_{i}\circ ... \circ
g_{1}\circ f_{1}(G_2)$ and $h_{i} \circ s_{i}\circ f_{i}\circ ...
\circ g_{1}\circ f_{1}(G_2)$ are template edge graphs.
\item Both $r_i$ and $s_i$ are composed of finitely many steps of
simple splits (See Figure \ref{figure02}), expanding surgeries
and/or their converses.
\item $h_i$ is an out-split, an in-split or the inverse of an out-split or in-split
between two template edge graphs.
\end{enumerate}

Figure \ref{figure03} shows  a decomposition of an out-split. In a
similar way
 as Figure \ref{figure03} shows, each $g_i$ admits such a decomposition.

Set $t_1 =s_{1} \circ f_1$, $t_i = s_i \circ f_i \circ r_{i-1}$
($i=2,...,n$) and $t_{i+1}=f_{n+1}\circ r_n$. Then $G_1
=t_{n+1}\circ h_{n}\circ t_{n}\circ ... \circ h_{1}\circ t_{1}
(G_2)$. Here each $t_i$ ($i=1,...,n+1$) is a composition of finitely
many steps of simple splits, expanding surgeries and/or their
converses between two template edge graphs. It is easy to show that
each $t_i$ is a composition of the surgeries in Figure
\ref{figure01}, expanding surgeries and the converses of expanding
surgeries. The local structure of each $h_i$ is shown in Figure
\ref{figure9} (see also Figure \ref{figure03}).

Obviously $t_{n+1}\circ h_{n}\circ t_{n}\circ ...\circ h_{1}\circ
t_{1}: G_2 \rightarrow G_1$ provides the corresponding
transformation from $T_2$ to $T_1$, i.e., $\mathcal {T}_{n+1}\circ
\mathcal {H}_{n}\circ \mathcal {T}_{n}\circ ... \circ \mathcal
{H}_{1}\circ \mathcal {T}_{1}: T_2 \rightarrow T_1$. Here $\mathcal
{T}_i$ and $\mathcal {H}_j$ correspond to $t_i$ and $h_j$
respectively. By the constructions of $t_i$ and $h_i$, each
$\mathcal{T}_i$ corresponds to a composition of finitely many steps
of slide moves and each $\mathcal{H}_i$ corresponds to a split move
or the converse of a split move (see Figure \ref{figure9}).
Therefore, $T_1$ and $T_2$ can be exchanged by a finite sequence of
template moves.
\end{proof}

 \begin{theorem}\label{theorem3.3}
 Let $T_{1}$ and $T_{2}$ be two templates and $[\varphi^{T_1}_{t},
 K_{T_1}]$ and $[\varphi^{T_2}_{t}, K_{T_2}]$ be the germs  determined by $T_1$
 and $T_2$ respectively. Then $[\varphi^{T_1}_{t}, K_{T_1}]=[\varphi^{T_2}_{t}, K_{T_2}]$
 if and only if $T_{1}$ and $T_{2}$ can be exchanged by applying a finite sequence
 of template moves.
 \end{theorem}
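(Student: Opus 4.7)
The plan is to prove the two implications separately, with the main work being in the $(\Rightarrow)$ direction.

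For $(\Leftarrow)$, it suffices to check that each elementary template move --- a slide move, a split move, or its inverse --- preserves the germ $[\varphi^T_t, K_T]$. Each such move is a local modification of the template whose lift to the thickened template is realized by a flow-conjugating homeomorphism between two neighborhoods of $K_T$, since the move is supported in a flowbox region away from the invariant set itself. Applying finitely many such moves therefore leaves the germ unchanged.

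For $(\Rightarrow)$, my strategy is to bring both templates into a common ambient 3-manifold carrying a Smale flow so that Lemma \ref{lemma3.2} applies. Concretely, I would first close up $\overline{T_1}$ to a closed orientable 3-manifold $M$ carrying a smooth flow $\phi_t$ in which $K_{T_1}$ appears as a hyperbolic basic set modeled by the embedded template $T_1 \subset M$; this is done by attaching flow-compatible handles along the entrance and exit boundary of $\overline{T_1}$ (leading to sources and sinks respectively). The hypothesis $[\varphi^{T_1}_t, K_{T_1}] = [\varphi^{T_2}_t, K_{T_2}]$ then supplies a flow-conjugating homeomorphism $h: U_1 \to U_2$ between open neighborhoods $U_1 \subset \overline{T_1} \subset M$ of $K_{T_1}$ and $U_2 \subset \overline{T_2}$ of $K_{T_2}$, with $h(K_{T_1}) = K_{T_2}$. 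Next, choose a Markov flowbox neighborhood of $K_{T_2}$ contained in $U_2$ and let $\tilde T_2 \subset U_2$ be the template obtained by collapsing its stable foliation (as in Section \ref{section2.1}). By Lemma \ref{lemma3.2} applied to $\varphi^{T_2}_t$ on $\overline{T_2}$, the two embedded templates $T_2$ and $\tilde T_2$, both modeling $K_{T_2}$, are related by a finite sequence of template moves. Transporting $\tilde T_2$ via $h^{-1}$ produces an embedded template $\tilde T_2' := h^{-1}(\tilde T_2) \subset M$ that models $K_{T_1}$ and is isomorphic to $\tilde T_2$ as an abstract template (since $h$ conjugates the flows). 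A second application of Lemma \ref{lemma3.2} in $M$ yields a sequence of template moves between $T_1$ and $\tilde T_2'$. Splicing the two sequences via the canonical identification $\tilde T_2' \cong \tilde T_2$ --- template moves depending only on the abstract template, not on a specific embedding --- gives the desired sequence of template moves between $T_1$ and $T_2$.

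The main obstacle is ensuring that the Markov flowbox neighborhood of $K_{T_2}$ can be chosen to lie inside the prescribed neighborhood $U_2$. This should follow from the existence of arbitrarily fine Markov partitions for hyperbolic basic sets (Bowen--Walters), since the corresponding flowbox neighborhoods can be made to shrink down to $K_{T_2}$ and hence fit into any prescribed open neighborhood of it. A secondary technical point is that the closing-up of $\overline{T_1}$ can be carried out without introducing extra chain-recurrent behavior inside $\overline{T_1}$, which is routine given the natural partition of $\partial \overline{T_1}$ into the entrance set $X$, the exit set $Y$, and the dividing curves $C$.
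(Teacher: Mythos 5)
Your proposal is correct and follows essentially the same route as the paper: sufficiency by checking that a slide move gives an equivalence of thickened templates while a split move gives a sub-neighborhood of the invariant set (hence the same germ), and necessity by using the germ-equivalence homeomorphism to produce a pair of identified templates inside the equivalent neighborhoods $U_1$ and $U_2$ and then invoking Lemma \ref{lemma3.2} twice. Your extra steps (closing up $\overline{T_1}$ to a closed manifold, and justifying that the Markov flowbox neighborhood fits inside $U_2$ via arbitrarily fine Markov partitions) only make explicit technical points the paper leaves implicit.
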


\begin{proof}

\emph{The sufficiency}. It suffices to consider only one template
move and there are the following two possible cases:
\begin{enumerate}
\item $T_1$ is transformed into $T_2$ by a slide move;
\item $T_1$ is transformed into $T_2$ by a split move.
\end{enumerate}

By the construction of a thickened template, it is easy to show that
$(T_{1},\varphi^{T_1}_{t})$ is topologically equivalent to
$(T_{2},\varphi^{T_2}_{t})$ in case (1). Therefore, in this case,
$[\varphi^{T_1}_{t}, K_{T_1}]=[\varphi^{T_2}_{t}, K_{T_2}]$.

In case (2), there exists a template $T_{1}'\subset T_{1}$ such that
$T_{1}'=T_2$. So $(\overline{T_{1}'},\varphi^{T_{1}'}_{t})$ is
topologically equivalent to $(\overline{T_{2}},\varphi^{T_{2}}_{t})$
and $K_{T_1}\subset\overline{T_{1}'}\subset \overline{T_1}$.
Therefore, $[\varphi^{T_1}_{t}, K_{T_1}]=[\varphi^{T_2}_{t},
K_{T_2}]$.

\emph{The necessity}.
 If $[\varphi^{T_1}_{t},
K_{T_1}]=[\varphi^{T_2}_{t}, K_{T_2}]$, by the definition of germ,
there exists $U_1$ and $U_2$ which are the neighborhoods of
$K_{T_1}$ and $K_{T_2}$ respectively such that:
\begin{enumerate}
\item $K_{T_i}\subset U_i\subset \overline{T_i}$ for $i=1,2$;
\item $(U_1,\varphi^{T_1}_{t}|_{U_1})$ is topologically equivalent to
$(U_2,\varphi^{T_2}_{t}|_{U_2})$.
\end{enumerate}

Therefore, we can construct templates $T_{1}^{1}\subset U_1$ for
$(U_1,\varphi^{T_1}_{t}|_{U_1})$ and $T_{2}^{2}\subset U_2$ for
$(U_2,\varphi^{T_2}_{t}|_{U_2})$ such that $T_{1}^{1}=T_{2}^{2}$. On
the other hand, by Lemma \ref{lemma3.2}, $T_i$ and $T_{i}^{i}$
($i=1,2$) can be exchanged by applying a finite sequence of template
moves. Therefore, $T_1$ and $T_2$ can be exchanged by applying a
finite sequence of template moves.
\end{proof}

\section{Template, Model of Germ and Filtrating Neighborhood}

One may attach  a 2-handle along a dividing curve $c$ of a thickened
template $\overline{T}$ as shown in  Figure \ref{figure11}. Here a
2-handle is a cylinder $D^{2}\times [0,1]$ endowed with the vector
field $\frac{\partial}{\partial t}$. The first figure in Figure 11
is a cross-section of $\overline{T}$ along $c$.

\begin{figure}[htp]
\begin{center}
  \includegraphics[totalheight=4.5cm]{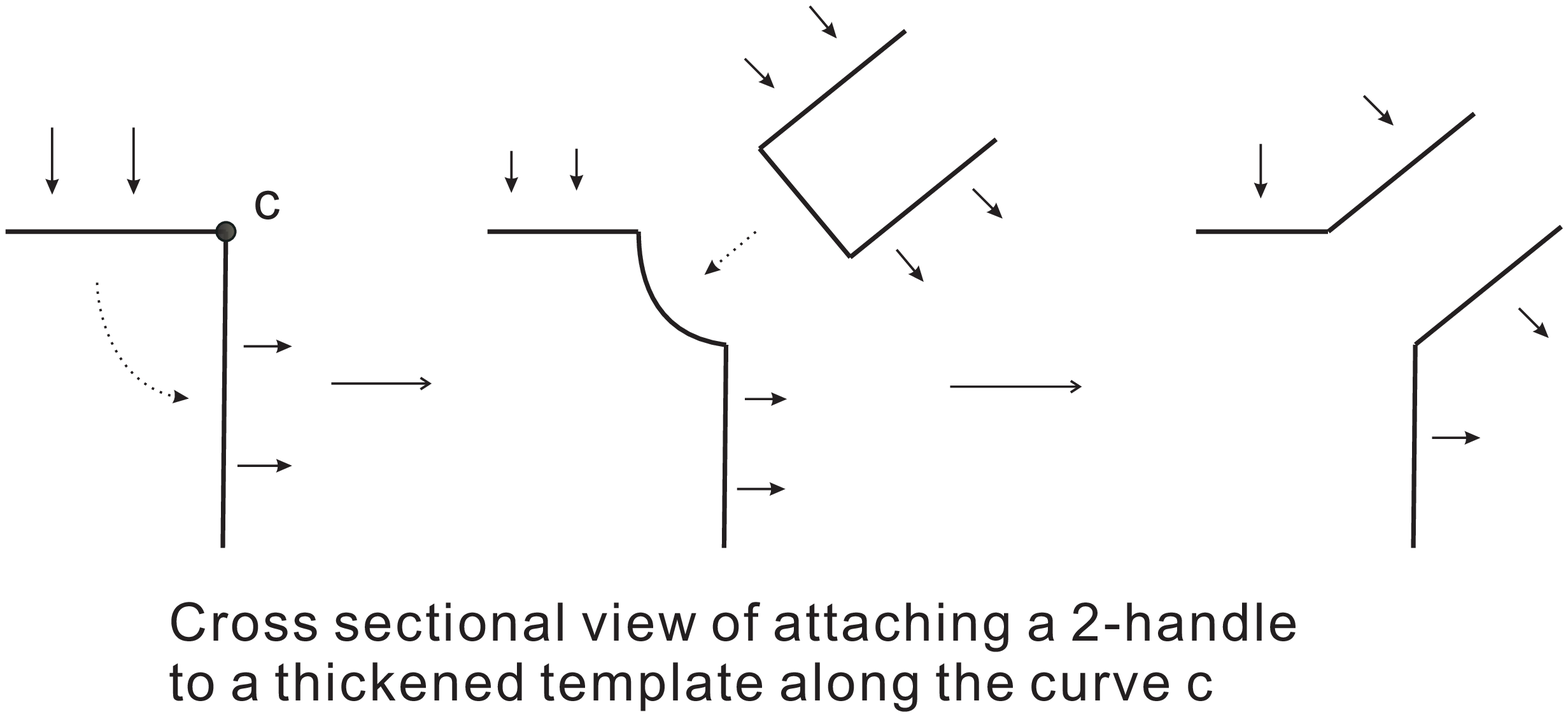}\\
  \caption{}\label{figure11}
  \end{center}
\end{figure}

If we repeat the above process for each dividing curve of
$\overline{T}$, we obtain a  compact 3-manifold $\overline{N}$ with
the flow $\overline{\varphi_{t}}$. Here $\overline{\varphi_{t}}$ is
transverse to $\partial \overline{N}$. Obviously, $K_{T}\subset
\overline{T}\subset \overline{N}$ is the maximal invariant set for
$\overline{\varphi_{t}}$ in $\overline{N}$.

In \cite{Fr}, G. Frank described $W^{s}(\varphi_t)\cap \partial
\overline{T}$. We restate his result in the following:

\begin{theorem}\label{theorem4.1}
$W^{s}(\varphi_t)\cap \partial \overline{T}=W^{s}(\varphi_{t})\cap
X$ where $X$ is the entrance set of $\partial \overline{T}$. It is a
lamination $\mathcal {F}$ on $X$ which locally is homeomorphic to
$\mathcal {C} \times \mathbb{R}$. Here $\mathcal {C}$ is a Cantor
set. In $\mathcal {F}$, there are only a finite number of circles.
For any $x\in \mathcal {F}$, ${x}\times t$ approaches a circle
asymptotically as $t\rightarrow +\infty$ (or $t\rightarrow
-\infty$). $X-\mathcal {F}$ has infinite connected components. Each
connected component of $X-\mathcal {F}$ is homeomorphic to the
interior of $D^2$.
\end{theorem}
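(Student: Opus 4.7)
The plan is to transport the classical picture of the stable lamination on a Markov cross-section of the basic set $K_T$ back through the flow to the entrance boundary $X$, and then read off the five claims from this picture.

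First I would show that $W^{s}(\varphi_t)\cap \partial \overline{T}=W^{s}(\varphi_t)\cap X$. Since $\varphi_t$ is transverse to the entrance set $X$ and to the exit set $Y$, any point of $W^{s}(K_T)$ lying on $\partial\overline{T}$ must be on an orbit that enters $\overline{T}$ in positive time and stays. Points of $Y$ leave immediately, and points of the dividing curves $C$ do not belong to $W^s$ since $C$ contains no points of $K_T$ and the local flow near $C$ escapes $\overline{T}$ in both time directions. This forces $W^s\cap\partial\overline{T}\subseteq X$.

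Next, fix a Markov cross-section $\mathcal{S}=\bigsqcup D_i$ for $\varphi_t|_{K_T}$ given by Bowen--Walters. On each disc $D_i$ the intersection $W^s(K_T)\cap D_i$ is a Cantor family of smooth arcs (the stable slices of the Markov rectangles), together with finitely many distinguished arcs at the boundary of each Markov rectangle that correspond to the stable manifolds of the ``boundary'' periodic orbits of the template. Each Markov flowbox neighborhood deformation-retracts through the flow onto its base disc, so its entrance face is a union of rectangles; pulling back the Cantor arc structure along the flow gives $W^s(K_T)\cap X$ the local product form $\mathcal{C}\times\mathbb{R}$. This establishes the laminated structure with Cantor transversal.

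For the finitely many circles, I would identify them as the intersections with $X$ of the stable manifolds of the finitely many periodic orbits of $K_T$ that lie on the combinatorial boundary of the template (the orbits coming from the boundary edges of the Markov rectangles). Each such stable manifold is an annulus meeting $X$ in a single circle. For the asymptotic claim, I use that the Markov return map $f:\mathcal{S}\to\mathcal{S}$ acts hyperbolically on the transversal Cantor set, and that every non-boundary stable leaf of the Markov rectangle system, under backward iteration of $f$ (and the induced motion along $\mathbb{R}$-factor in $X$), accumulates on the boundary stable leaves; translating this back to $X$ via the flow says exactly that each $\{x\}\times t$ leaf spirals onto one of the boundary circles as $t\to+\infty$ or $t\to-\infty$, depending on which side of the Markov rectangle's boundary the leaf approaches. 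Finally, the complementary components of $X-\mathcal{F}$ correspond to the ``gaps'' in the Cantor transversal, each crossed with the $\mathbb{R}$-factor, producing countably many open strips; each such strip projects via the flow to a disc bounded by pieces of two boundary circles (or of the same circle), and a direct check shows that each complementary component is homeomorphic to the interior of $D^2$; infinitude follows from the Cantor transversal having infinitely many gaps.

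The main obstacle I expect is the asymptotic spiraling claim: a careful bookkeeping is required to match the symbolic coding on $\mathcal{C}$ with the combinatorial boundary of the branched surface $T$ and to verify that every non-boundary symbolic sequence has its forward (or backward) tail eventually trapped in the neighborhood of a boundary periodic sequence, so that the corresponding leaf genuinely accumulates on a single boundary circle of $\mathcal{F}$ rather than wandering among several. Once this trapping is established for each symbol, the other four claims follow from routine arguments on Markov partitions, flowboxes, and the template construction of Section \ref{section2.1}.
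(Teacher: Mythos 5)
This statement is not proved in the paper at all: it is a restatement of a theorem of G.~Frank from \cite{Fr} (``Templates and train tracks''), and the only original content is the remark immediately following it, namely that the last sentence (each component of $X-\mathcal{F}$ is an open disc) is not in Frank's theorem but follows easily from his arguments. So there is no in-paper proof to compare yours against; what you have written is a from-scratch outline, and it does follow the same general strategy as Frank's (transport the Markov/symbolic structure of the basic set to the entrance boundary along the flow). Your first two steps are sound: the reduction $W^{s}\cap\partial\overline{T}=W^{s}\cap X$ via transversality at $X$, $Y$ and external tangency along $C$, and the local $\mathcal{C}\times\mathbb{R}$ product structure pulled back from the Cantor-set-of-arcs picture on a Bowen--Walters cross-section, are both correct and routine.

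The genuine gaps are exactly where you flag difficulty, plus one imprecision. First, your assertion that the stable manifold of a boundary periodic orbit ``is an annulus meeting $X$ in a single circle'' is not right as stated: the global stable manifold of such an orbit meets $X$ in one closed leaf (the first backward exit of the \emph{outer} half of its local stable annulus) together with infinitely many line leaves coming from the other preimages; only the former are the circles of $\mathcal{F}$, and you still owe an argument that \emph{every} compact leaf arises this way, i.e.\ that a closed leaf forces its asymptotic orbit to be one of the finitely many boundary-periodic orbits of the Markov coding. Second, the spiraling claim and the disc claim are asserted, not proved: for the spiraling you need the symbolic trapping argument you describe but do not carry out, and for the complementary components note that ``infinitely many gaps in the transversal Cantor set'' does not by itself give infinitely many components of $X-\mathcal{F}$ (distinct gaps on one transversal could a priori lie in the same global complementary region), so the disc structure and the count have to be established together. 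None of these gaps indicates a wrong approach---they are precisely the content of Frank's paper---but as written the proposal proves only the first two of the five assertions.
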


\begin{remark}
The statement ``Each connected ...$D^2$" in Theorem \ref{theorem4.1}
is not included in the original theorem of G. Frank. But it is easy
to prove this fact by the arguments in his paper \cite{Fr}.
\end{remark}

 \begin{theorem}\label{theorem4.3}
Let $\overline{T}$ be a thickened template and let $\overline{N}$ be
obtained by attaching 2-handles to each dividing curve. Then
$(\overline{\varphi_{t}},\overline{N})$ is the model of the germ
$[\varphi^{T}_{t}, K_{T}]$.
 \end{theorem}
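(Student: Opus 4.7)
The plan is to verify, for the pair $(\overline{\varphi_t},\overline{N})$, the three defining conditions of a model of the germ $[\varphi^{T}_t,K_T]$ given in Section \ref{section2.3}. Conditions (1) and (3) are essentially immediate: each 2-handle $D^{2}\times[0,1]$ carries the vector field $\partial/\partial t$, so every orbit entering through $D^{2}\times\{0\}$ exits through $D^{2}\times\{1\}$ in finite time and never returns to $\overline{N}$; hence the maximal $\overline{\varphi_t}$-invariant set in $\overline{N}$ lies in $\overline{T}$ and equals $K_T$, and since $\overline{T}\subset\overline{N}$ is a neighborhood of $K_T$ on which the two flows agree, the germs coincide. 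Because the handles are attached along annular collars of the dividing curves in $\partial\overline{T}$, they introduce no new connected components, and condition (3) follows from the fact that each component of $\overline{T}$ already meets $K_T$.

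For condition (2), I would first describe the entrance boundary carefully: $\partial_{1}\overline{N}$ is obtained from $X$ by excising a narrow $X$-side collar of each dividing curve $c$ and gluing in the entrance disc $D^{2}\times\{0\}$ of the corresponding handle. Since orbits through any of these caps stay in the 2-handle and exit through $D^{2}\times\{1\}$, none belongs to $W^{s}(K_T)$; hence $W^{s}(K_T)\cap\partial_{1}\overline{N}=\mathcal{F}$, the lamination of Theorem \ref{theorem4.1} supported on the $X$-piece. Now take a circle $\gamma\subset\partial_{1}\overline{N}$ disjoint from $W^{s}(K_T)$. Then $\gamma$ lies in a connected component $U$ of $\partial_{1}\overline{N}\setminus\mathcal{F}$, and the plan is to prove $U$ is homeomorphic to an open disc; any Jordan curve in an open disc automatically bounds a closed disc, which delivers condition (2). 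By Theorem \ref{theorem4.1}, each component of $X\setminus\mathcal{F}$ is an open disc, and the added caps are open discs carrying no part of $\mathcal{F}$; attached along arcs to the $X$-piece, they give $U$ as an assembly of open discs glued along disjoint boundary arcs, which is again an open disc.

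The main obstacle is precisely this last topological verification. The gluing circle of each cap may meet $\mathcal{F}$ in a Cantor set and so, a priori, be subdivided into countably many arcs; one has to rule out that gluing discs across such a complicated pattern creates a component of higher genus or with extra ends. The plan is to exploit the local Cantor$\times\mathbb{R}$ product structure of $\mathcal{F}$ together with the finiteness of its circle leaves (both from Theorem \ref{theorem4.1}) to choose the $X$-side collar of each dividing curve $c$ thin, with its inner boundary circle transverse to the leaves of $\mathcal{F}$ and disjoint from the finitely many circle leaves. A direct local analysis along each arc of the gluing circle then confirms that the resulting component $U$ is an open disc, closing condition (2) and finishing the proof.
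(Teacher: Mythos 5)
Your overall skeleton matches the paper's: conditions (1) and (3) of the definition of a model are immediate, and the substance is condition (2), for which the description of $W^{s}(K_T)\cap X$ in Theorem \ref{theorem4.1} is the essential input. You diverge in how condition (2) is extracted, and that is where there is a genuine gap. You aim to prove the stronger global statement that every connected component $U$ of $\partial_{1}\overline{N}\setminus\mathcal{F}$ is an open disc, by assembling $U$ from disc pieces (components of $X\setminus\mathcal{F}$ and the handle caps) ``glued along disjoint boundary arcs, which is again an open disc.'' That step is false as a general principle: two discs glued along two disjoint boundary arcs form an annulus, and the assembly of discs along disjoint arcs is a disc only when the adjacency pattern of the pieces contains no cycle. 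A cycle arises, for example, whenever two distinct arcs of the attaching circle of a single cap abut the same component of $X\setminus\mathcal{F}$, or when two caps each meet two common complementary components. You flag this as the main obstacle, but the remedy you propose --- a local analysis along each arc of the gluing circle using the Cantor~$\times\,\mathbb{R}$ structure and the finiteness of circle leaves --- cannot close it, because the obstruction is global: it concerns which pairs of arcs see the \emph{same} complementary component, which no analysis local to a single arc can detect. Note that this is not a removable technicality: if such a cycle occurred, $U$ would contain an essential circle and condition (2) would genuinely fail, so ruling it out is the entire content of the step.

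The paper sidesteps the global assembly problem. Given a circle $c\subset\partial_{1}\overline{N}$ disjoint from $W^{s}(\overline{\varphi_{t}})$, it argues in three cases: $c$ inside a cap (it bounds there trivially); $c\subset X$ (it lies in a single disc component of $X\setminus\mathcal{F}$ by Theorem \ref{theorem4.1} and bounds a disc there); and the mixed case, where $c$ is first isotoped off the caps into $X$ through the complement of $W^{s}$ --- possible because points of the caps and of a neighbourhood of the dividing curves have orbits leaving $\overline{N}$ immediately and hence miss $W^{s}(K_T)$ --- after which the bounding disc is the union of the isotopy trace with the disc supplied by the second case. If you wish to retain your ``every component is a disc'' strategy, you must add an argument controlling the adjacency combinatorics of caps and complementary components (e.g., that each cap together with its collar lies in the closure of exactly one complementary component and capping it off there creates no cycle); as written, that argument is missing.
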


 \begin{proof}
 $(\overline{\varphi_{t}},\overline{N})$ obviously satisfies (1) and (3) in the
 definition of the model of a germ. Therefore, it is sufficient to
 prove that $(\overline{\varphi_{t}},\overline{N})$ satisfies (2) of the
 definition.

 For any circle $c$ in $\partial_{1}\overline{N}$ which is disjoint
from $W^{s}(\overline{\varphi_{t}})$, at least one of the following
three situations occurs.
 Here $\partial_{1}\overline{N}$ is the entrance set of $(\overline{\varphi_{t}},\overline{N})$.

 Case 1: $c\subset D^{2}\times0 \sqcup D^{2}\times1$ for some attached
 2-handle $D^{2}\times [0,1]$. Then obviously $c$ bounds a disk in
 $(D^{2}\times0 \sqcup D^{2}\times1)\subset
 \partial_{1}\overline{N}$. It is easy to see that the disk
 is disjoint from $W^{s}(\overline{\varphi_{t}})$.

 Case 2: $c\subset X\subset \partial_{1}\overline{N}$ and
 $c\cap W^{s}(\overline{\varphi_{t}})=\emptyset$. Obviously $c\cap
 W^{s}(\varphi_{t}^{T})=\emptyset$. By Theorem \ref{theorem4.1}, $c$ is in the interior of a
 disk which belongs to $X$ and is
 disjoint from $W^{s}(\overline{\varphi_{t}})$.
 Therefore, $c$ bounds a disk in $X\subset
 \partial_{1}\overline{N}$. Moreover, the disk
 is disjoint from $W^{s}(\overline{\varphi_{t}})$.

 Case 3: $c\cap X\neq\emptyset$ and $c\cap \{2-handles\}
 \neq\emptyset$. In this case, there exists an isotopy map $F_{t}(x):S^1\rightarrow
 \partial_{1}\overline{N}$ ($t\in [0,1]$) such that
$F_{0}(S^1)=c$, $F_{1}(S^1)=c_1$ and $F_{[0,1]}(S^1)\cap
W^{s}(\overline{\varphi_{t}})=\emptyset$, where $c_1$ is a simple
closed curve in $X$.

As in Case 2, $c_1$ bounds a disk $D_1\subset \partial \overline{N}$
which is disjoint from $W^{s}(\overline{\varphi_{t}})$. Let
$D=F_{[0,1]}(S^1)\cup D_1$ which is also a disk in $\partial_1
\overline{N}$. It is easy to check that $\partial D =c$ and $D\cap
W^{s}(\overline{\varphi_{t}})=\emptyset$.
 \end{proof}

Let $\Sigma_{g,k}$ be a genus $g$ orientable surface with $k$
punctures. We endow $\Sigma_{g,k}\times [0,1]$ with the vector field
$\frac{\partial}{\partial t}$. Let $\overline{T}$ be a thickened
template and denote the set of dividing curves by set
$C=\{c_1,c_2,...,c_n\}$ where each $c_i$ $(i=1,...,n)$ is a simple
closed curve. Consider a partition of $C$, i.e., $C=C_1 \sqcup C_2
\sqcup ... \sqcup C_t$ satisfying $C_j\subset C$ $(j=1,...,t)$ and
$C_i \cap C_j =\emptyset$ ($i\neq j$). Denote the number of the
connected components of $C_i$ by $k_i$. One may attach
$\Sigma_{g_i,k_i}\times [0,1]$ to $\overline{T}$  along $C_i$
similar to attaching 2-handle.  By Proposition \ref{proposition2.11}
and Theorem \ref{theorem4.3}, we have the following theorem.

\begin{theorem}\label{theorem4.4}
If $U$ is a filtrating neighborhood with a basic set modeled by $T$,
then there exists a partition $C_1 \sqcup C_2 \sqcup ... \sqcup C_t$
of $C$ such that $U$ is topologically equivalent to $(\phi_t,N)$.
Here $(\phi_t,N)$ is obtained by attaching $\Sigma_{g_i,k_i}\times
[0,1]$ to $\overline{T}$  along $C_i$ ($i=1,...,t$) for some
nonnegative integers $g_i$ $(i=1,...,t)$.
\end{theorem}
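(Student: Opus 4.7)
The plan is to combine Theorem \ref{theorem4.3} with Proposition \ref{proposition2.11} and then reorganize the resulting handle attachments into groups, each realizing a surface-times-interval piece. By Theorem \ref{theorem4.3}, the pair $(\overline{\varphi_t},\overline{N})$ obtained from $\overline{T}$ by attaching a 2-handle along each dividing curve $c_1,\ldots,c_n \in C$ is a model of the germ $[\varphi_t^{T},K_T]$. Since $U$ is a filtrating neighborhood whose basic set is modeled by $T$, its germ coincides with $[\varphi_t^{T},K_T]$, so Proposition \ref{proposition2.11} produces a pair topologically equivalent to $(\phi_t|_{U},K)$ by performing finitely many handle attachments on $(\overline{\varphi_t},\overline{N})$.

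My next step would be to show that every attaching disc used by these handle attachments may be assumed to lie inside one of the 2-handle entrance caps rather than inside the original entrance $X$ of $\overline{T}$. Any such disc must be disjoint from $W^{s}$, so by Theorem \ref{theorem4.1}, if it sits in $X$ it is contained in a connected component of $X\setminus \mathcal{F}$. Each such component is an open disc whose frontier in $X$ is a circle of $\mathcal{F}$ asymptotic under the flow to a closed orbit of the basic set, and that closed orbit corresponds to a dividing curve of $T$. Following these asymptotic orbits, I would isotope the attaching disc along the flow into the entrance cap of the associated 2-handle; performing this one handle attachment at a time by induction ensures the isotopies do not create unwanted intersections.

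With every handle attachment now connecting two 2-handle caps, I define the partition $C_1 \sqcup \cdots \sqcup C_t$ of $C$ by declaring that $c_p$ and $c_q$ belong to the same block whenever their associated 2-handles are joined by a chain of handle attachments, and set $k_i = |C_i|$. For each block $C_i$, let $N_i$ be the union of the $k_i$ corresponding 2-handles together with the handle attachments among them; it is a connected compact 3-submanifold of $U$ glued to $\overline{T}$ along annular neighborhoods of the curves in $C_i$. The flow on $N_i$ is transverse to its entrance and exit boundaries and has no chain-recurrent points (the basic set lies entirely in $\overline{T}$), so the flow trivializes $N_i$ as $S_i \times [0,1]$ with the vertical field; since $S_i$ is connected and is built from $k_i$ discs (the entrance caps) together with bands (the entrance traces of the handle attachments), it has the form $\Sigma_{g_i,k_i}$ for some nonnegative integer $g_i$. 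The main obstacle is the second step: certifying that the handle attachments produced abstractly by Proposition \ref{proposition2.11} can all be isotoped so that their attaching discs lie inside the 2-handle caps, and verifying that these isotopies can be carried out simultaneously without interference.
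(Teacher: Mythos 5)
Your outline -- realize the model via Theorem \ref{theorem4.3}, invoke Proposition \ref{proposition2.11} to write $U$ as the model plus finitely many handle attachments, then regroup the handles into $\Sigma_{g_i,k_i}\times[0,1]$ pieces -- is exactly the route the paper takes (the paper in fact offers nothing beyond the citation of those two results). You also correctly isolate the one step that carries all the content: showing the attaching discs may be taken inside the $2$-handle caps. Unfortunately, the justification you sketch for that step does not work, and this is a genuine gap rather than a technicality.

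The problem is your reading of Theorem \ref{theorem4.1}. A connected component of $X\setminus\mathcal F$ is an open disc, but its frontier is a union of \emph{leaves} of the lamination $\mathcal F$, i.e.\ pieces of $W^{s}(K_T)\cap X$; only finitely many leaves are circles while there are infinitely many complementary components, and in any case those circles are stable manifolds of periodic orbits of $K_T$ -- they have nothing to do with the dividing curves, which constitute $\partial X$ and are entirely disjoint from $W^{s}$. Consequently a complementary disc lying ``deep'' in the lamination sits in a connected component of $\partial_{1}\overline{N}\setminus W^{s}(\overline{\varphi_t})$ that contains no $2$-handle cap and does not meet the collar of $\partial X$, so there is no isotopy avoiding $W^{s}$ that carries an attaching disc from such a gap into a cap; and ``isotoping along the flow'' pushes the disc into the interior of $\overline N$, not to another location on the entrance boundary. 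Since the topological type of the result of a handle attachment depends on the component of $\partial_{1}\overline{N}\setminus W^{s}$ (indeed on the itinerary of the orbit cylinder through the template), and Proposition \ref{proposition2.11} places no restriction on where its discs lie, the theorem is not proved until attachments based in these interior gaps are either ruled out or shown to yield equivalent results. A workable route is to argue directly that for a genuine filtrating neighborhood $U$ the closure of $U\setminus\overline T$ is a region with no chain recurrence in which every orbit has uniformly bounded transit time, hence a product $F\times[0,1]$ glued to $\overline T$ along annular neighborhoods of the dividing curves; your final regrouping step then goes through as you describe. As written, though, the central normalization claim is unsupported.
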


\begin{theorem}\label{theorem4.5}
Let $M=M'\sharp mS^1 \times S^2$ where $M'$ is prime to $S^1 \times
S^2$. Suppose $\phi_t$ is an NS flow on $M$ with a basic set
$\Lambda$ modeled by a template $T$. As Theorem \ref{theorem4.4}
shows, the filtrating neighborhood $U$ of $\Lambda$ is obtained by
attaching $\Sigma_{g_i,k_i}\times [0,1]$ $(i=1,...,t)$ to
$\overline{T}$ along dividing curves set $C$ of $\overline{T}$. Then
for any $i\in\{1,...,t\}$, we have $g_i\leq m+1$ and
\begin{enumerate}
\item  $k_i \leq 4m-3g_i +3$ for $g_i\leq m$;
\item  $k_i\leq m+1$ for $g_i=m+1$.
\end{enumerate}
\end{theorem}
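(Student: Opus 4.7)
My approach is to embed the filtrating neighborhood $N$ inside $M$ and use the restricted topology of $M$, governed by the prime count $m$, to constrain each attaching pair $(g_i, k_i)$.

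First I would invoke the author's earlier result from \cite{Yu2}: if $M$ admits an NS flow with a basic set modeled by $T$, then $M = M'' \sharp g(T)\, S^1 \times S^2$ for some closed orientable 3-manifold $M''$. Combining this with the prime decomposition $M = M' \sharp m\, S^1 \times S^2$ and the hypothesis that $M'$ is prime to $S^1 \times S^2$, uniqueness of the prime decomposition of 3-manifolds forces $g(T) \leq m$. This anchors the base topology already contributed by the handlebody $\overline{T}$ of genus $g(T)$.

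Next I would analyze each attached piece $\Sigma_{g_i, k_i} \times [0,1]$ and how it modifies $\overline{T}$. As a compact orientable 3-manifold whose fundamental group is the free group of rank $2g_i + k_i - 1$ and whose boundary is a closed surface of the same genus, $\Sigma_{g_i, k_i} \times [0,1]$ is itself a handlebody of genus $2g_i + k_i - 1$; it is glued to $\overline{T}$ along $k_i$ annuli neighboring the curves of $C_i$. Using Mayer--Vietoris I would compute $H_1(N)$ and track $\partial N$ in terms of $g(T)$, $g_i$, $k_i$, and the induced maps of the attaching cycles. A parallel Euler-characteristic count gives $\chi(\partial N)=2-2g(T)+4t-4\sum_i g_i -2\sum_i k_i$, recording how each attachment reshapes the boundary surface. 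The manifold $N$ embeds in $M$, and the complement $W = M \setminus \mathrm{int}(N)$ supports the restriction of $\phi_t$. Outside $N$ the invariant set consists of the remaining basic sets of $\phi_t$, each contained in its own filtrating neighborhood and joined by NMS-type transition regions. Since NMS flows admit round-handle decompositions whose $S^{1}\times S^{2}$ contribution is non-negative, and since any other saddle basic set can be absorbed into the same bookkeeping (or, in the simplest case, every other basic set is a closed orbit so $W$ is a union of solid tori and NMS transition regions), the contribution of $W$ to the $S^1 \times S^2$ count of $M$ is non-negative, and hence the contribution from $N$ is bounded above by $m$.

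To extract the specific inequalities, I would maximize the $S^1 \times S^2$ consumption of a single attachment $\Sigma_{g_i, k_i} \times [0,1]$ subject to the global budget $m$. The bound $g_i \leq m+1$ arises because taking $g_i$ larger would already exceed the number of $S^1 \times S^2$ summands available in $M$; the linear bound $k_i \leq 4m - 3g_i + 3$ for $g_i \leq m$ is obtained by quantitatively balancing the genus and boundary-circle contributions from $\Sigma_{g_i, k_i}\times\{0,1\}$ against the handlebody genus of $\overline{T}$; and $k_i \leq m+1$ when $g_i = m+1$ reflects that in this extremal regime almost all of the available $S^1 \times S^2$ summands are already absorbed by $g_i$, leaving only $m+1$ for the boundary components.

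\textbf{Main obstacle.} The technical heart of the argument is converting the global constraint $N \hookrightarrow M$ into quantitative bounds on each individual $(g_i,k_i)$, and establishing the case split at $g_i=m+1$. One has to verify that attachments along different $C_i$ do not share topology in a way that lets their contributions cancel, and that the complement $W$'s contribution to the $S^{1}\times S^{2}$ count is correctly bounded below. The sharp transition between $k_i \leq 4m-3g_i+3$ (for $g_i\leq m$) and $k_i\leq m+1$ (for $g_i=m+1$) will likely require a separate extremal argument that distinguishes the maximally filled regime from the generic one.
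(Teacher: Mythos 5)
Your proposal does not reach the actual mechanism of the paper's proof, and the route you sketch has a gap that I do not think can be repaired in the form stated. The paper's argument is local and geometric: it first quotes Theorem 1 of \cite{Yu2} to bound the genus of every boundary component of $U$ by $m+1$ (this gives $g_i\leq m+1$ immediately), and then argues by contradiction. Assuming $k_i\geq 4m-3g_i+4$, a purely combinatorial genus count inside the genus-$\leq m+1$ exit surface $\Sigma_1$ (Claim 1) shows that at least $2m-g_i+2$ of the curves of $C_i\times 1$ must bound \emph{disks} in $\Sigma_1$ --- this is where the coefficients $4m-3g_i+3$ actually come from, and your proposal contains no step that produces them. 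These disks, pushed backward along the flow and capped off in $\Sigma_0$, assemble into at least $m+1$ disjoint embedded $2$-spheres (Claim 2). Since $M=M'\sharp mS^1\times S^2$ with $M'$ prime to $S^1\times S^2$, some subfamily of these $m+1$ spheres must separate $M$, and the separating family cuts the connected template $T$ into two pieces --- a contradiction. None of these three ingredients (disks from Claim 1, spheres from Claim 2, the separation/connectivity contradiction) appears in your outline.

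The specific step in your plan that fails is the ``budget'' argument: you propose to bound the $S^1\times S^2$ ``consumption'' of $N$ by $m$ on the grounds that the complement $W$ contributes non-negatively. But $N$ is glued to $W$ along surfaces of genus up to $m+1$, not along spheres, so the prime decomposition of $M$ does not split as a sum of contributions from $N$ and $W$; there is no well-defined non-negative ``$S^1\times S^2$ count'' attached to each side of a higher-genus separating surface. Your Mayer--Vietoris and Euler-characteristic computations (the formula for $\chi(\partial N)$ is correct) constrain only aggregate quantities such as $\sum_i(2g_i+k_i)$, and cannot isolate a bound on an individual $(g_i,k_i)$, let alone explain the case split at $g_i=m+1$. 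To complete a proof you would need to replace the budget heuristic with an honest construction of essential $2$-spheres in $M$ from the attaching data --- which is exactly what the paper's Claims 1 and 2 do.
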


\begin{proof}
By Theorem 1 of \cite{Yu2}, the genus of any connected component of
$\partial U$ is no more than $m+1$, hence $g_{i}\leq m+1$ for any
$i\in\{1,...,t\}$.

Suppose $\Sigma_{g_i,k_i} \times 0$ ($\Sigma_{g_i,k_i} \times 1$) is
attached to $\Sigma_0$ ($\Sigma_1$) which is a connected component
of $\partial U$. Under the assumption  $k_i\geq 4m-3g_i +4$, we
obtain the following two claims.

\textbf{Claim 1.} There exist at least $r(g_i)=2m-g_i +2$ simple
closed curves of $C_i \times 1$ such that these curves bound the
disks $D_1, D_2, ..., D_{r(g_i)}$ in $\Sigma_1$.

Otherwise, there exist $k=2m-2g_i +3$ simple closed curves in $C_i
\times 1$ bound some surfaces $S_1, S_2,.., S_s$ in $\Sigma_1$ such
that none of them is homeomorphic to a disk. Without loss of
generality, we suppose these simple closed curves are $c_1, ...,
c_k$. If $\partial S_j$ ($j=1,..,s$) is connected, then the genus of
$S_j$ is at least $1$.  Denote the number of the connected
components of $\partial S_j$ by $r_j$. If $\partial S_j$ isn't
connected , the genus of $(\Sigma_1 -S_j)$ is at most $m+1-r_j +1$.
Suppose none of $\partial S_1,...,\partial S_{\lambda}$
($\lambda\leq s$) is connected and each of $\partial S_{\lambda+1},
...,
\partial S_s$ is connected. It is easy to show that the genus of
$\Sigma_{g_i,k_i}$ is at most
\begin{equation}
\begin{split}
&m+1-\sum_{j=1}^{\lambda} (r_j -1) -(s-\lambda)\\
&=m+1-k+\lambda \quad (\text{since } \sum_{j=1}^{\lambda} r_j+(s-\lambda)=k)\\
&=2g_i -m-2+\lambda \quad (\text{since } k=2m-2g_i +3).
\end{split}\nonumber
\end{equation}
Noting that $r_j \geq 2$ and $\sum_{j=1}^{\lambda}
r_j+(s-\lambda)=k=2m-2g_i +3$, we have $\lambda\leq m-g_i +1$ and
$2g_i -m-2+\lambda\leq 2g_i -m-2+(m-g_i +1) =g_i -1$. Therefore, the
genus of $\Sigma_{g_i,k_i}$ is at most $g_i -1$. It contradicts the
fact that the genus of $\Sigma_{g_i,k_i}$ is $g_i$.

\textbf{Claim 2.} The disks $D_1,...,D_{r(g_i)}$ together with the
backward trajectory of their boundaries and $\Sigma_0
-\Sigma_{g_i,k_i} \times 0$ ($0\leq g_i \leq m+1$) compose at least
$m+1$ 2-spheres.

Case 1: $g_i =m+1$.

By the assumption ``$k_i\geq 4m-3g_i +4$" and Claim 1, we have
$k_i\geq m +1$ and there exist at least $r(m+1)=m+1$ simple closed
curves of $C_i \times 1$ which bound disks in $\Sigma_1$. By Theorem
1 of \cite{Yu2}, the genus of $\Sigma_0$ and $\Sigma_1$ are no more
than $m+1$, so the backward trajectory of the boundaries of
$D_1,...,D_{r(g_i)}$ also bounds disks in $\Sigma_0$. Therefore, if
$g_i =m+1$, Claim 2 is true.

 Case 2: $g_i \leq m$ and $C_i \times 0$ bound some genus $0$ surfaces $F_1, F_2,..., F_{\mu}$
 in $\Sigma_0$.

 Suppose $\partial F_i$ ($i=1,..,\mu$) has $r_i$ ($r_i\geq 1$)
 connected
 components. Obviously $g_i =m+1-\sum_{i=1}^{\mu} (r_i
 -1)=m+\mu+1-\sum_{i=1}^{\mu}
 r_i$. By Claim 1, there  exist at least $r(g_i)=2m-g_i
 +2=2m-(m+\mu+1)+\sum_{i=1}^{\mu}
 r_i +2=(m+1)+\sum_{i=1}^{\mu} (r_i -1)$ simple closed curves of $C_i \times 1$ bound disks $D_1, D_2, ...,
D_{r(g_i)}$ in $\Sigma_1$. Since $\partial F_i$ ($i=1,..,\mu$) has
$r_i$ ($r_i\geq 1$) connected
 components, the genus of $F_i$ is $0$ and $r(g_i)=(m+1)+\sum_{i=1}^{\mu} (r_i
 -1)$. Therefore, the backward trajectory of these simple closed curves (the
boundaries of $D_1, D_2, ..., D_{r(g_i)}$) bounds at least $m+1$
disks in $\Sigma_0$. Therefore, in this case, Claim 2 is also true.

Case 3: $g_i \leq m$ and there exists $\{c_1 \times 0,...,c_k \times
0\}\subset C_i \times 0$ bounds a genus $g$ ($g\geq 1$) surface in
$\Sigma_0$.

Attaching a surface $\Sigma_{g,k}$ to $\Sigma_{g_i,k_i}$ along its
boundary, we have a surface $\Sigma'$. The genus of $\Sigma'$ is
$g_i +g+ k-1$ and $\partial \Sigma'$ has at least $4m-3g_i + 4-k$
components. By Claim 1, there exist at least $r(g_i)=2m-g_i +2$
simple closed curves of $C_i \times 1$ which bound disks in
$\Sigma_1$, so there exist at least $2m-g_i +2-k$ simple closed
curves of $\partial \Sigma' \times 1$ which bound disks in
$\Sigma_1$.

Since the genus of $\Sigma'$ is $g_i +g+ k-1$, $\partial \Sigma'$
has at least $4m-3g_i + 4-k$ ($> 4m-3(g_i +g+ k-1)+4$) components
and there exist at least $2m-g_i +2-k$ ($> 2m-(g_i +g+ k-1)+2$)
simple closed curves of $\partial \Sigma' \times 1$ which bound
disks in $\Sigma_1$. By induction on the number of the connected
components of $\Sigma_0 -\Sigma \times 0$ whose genus is no less
than $1$, it can always be reduced to a situation similar to $g_i
\leq m+1$ or a situation in which $g_i \leq m$ and $C_i \times 0$
bound some genus $0$ surfaces. Here $\Sigma \times 0 \subset
\Sigma_0$. By the previous discussions, in these two situations,
Claim 2 is true. Therefore, Claim 2 is always true.

Now, let's return to the proof of the theorem. Let
$S_1^2,...,S_{m+1}^2$ be $m+1$ 2-spheres in Claim 2, since
$M=M'\sharp mS^1 \times S^2$ where $M'$ is prime to $S^1 \times
S^2$, there exists $r\in \mathbb{N}$ satisfying $r\leq m+1$ such
that $S=S_1^2 \cup S_2^2 \cup ... \cup S_r^2$ divides $M$ into two
connected components.

If $g_i \leq m$ and $k_i\geq 4m-3g_i +4 \geq m+4$, then $S$ borders
with $\overline{T}$ on both sides of $S$, the template $T$ can't be
connected. It is a contradiction. This means ``$k_i\geq 4m-3g_i +4$"
is impossible, therefore $k_i\leq 4m-3g_i +3$.

If $g_i =m+1$ and $k_i \geq m+2$, then $m+2 > 4m-3(m+1)+4= 4m-3g_i
+4$. Therefore,  Claim 2 is also true in this case. Since $k_i \geq
m+2$, $S$ borders with $\overline{T}$ on both sides of $S$, the
template $T$ can't be connected. It is a contradiction, so $k_i \leq
m+1$ when $g_i =m+1$.
\end{proof}

\begin{corollary}\label{corollary4.6}
There exist at most a finite number of filtrating neighborhoods (up
to topological equivalence) for a given closed orientable 3-manifold
$M$ and a template $T$ such that:
\begin{enumerate}
\item $T$ models the invariant sets of these filtrating
neighborhoods;
\item Everyone of these filtrating neighborhoods can be realized as a filtrating neighborhood of an NS
flow on $M$.
\end{enumerate}
\end{corollary}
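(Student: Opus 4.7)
The plan is to combine Theorems \ref{theorem4.4} and \ref{theorem4.5} directly: the former expresses any filtrating neighborhood of $T$ via a finite amount of combinatorial data, and the latter bounds that data in terms of the topological invariant $m$ of $M$.

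Let $\overline{T}$ be the thickened template, let $C$ be its (finite) set of dividing curves, and write $M = M' \sharp m S^{1}\times S^{2}$ with $M'$ prime to $S^{1}\times S^{2}$. By Theorem \ref{theorem4.4}, every filtrating neighborhood $U$ whose invariant set is modeled by $T$ is topologically equivalent to one built from $\overline{T}$ by attaching $\Sigma_{g_i,k_i}\times [0,1]$ along each piece $C_i$ of some partition $C = C_1 \sqcup \cdots \sqcup C_t$, where $k_i = |C_i|$ and $g_i \geq 0$. Since $|C|$ is finite, the set of partitions of $C$ is finite. If such a datum arises from an NS flow on $M$, then Theorem \ref{theorem4.5} forces $g_i \leq m+1$ for every $i$, so each $g_i$ lies in the finite set $\{0,1,\ldots,m+1\}$; hence for each partition there are at most $(m+2)^{|C|}$ admissible tuples $(g_1,\ldots,g_t)$. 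Multiplying, the combinatorial data takes finitely many values, so only finitely many equivalence classes of $U$ can arise.

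The main obstacle I anticipate is not the counting itself but confirming that the data $(\{C_i\},\{g_i\})$ genuinely controls the topological type of $U$ up to topological equivalence, and not merely the homeomorphism type of the underlying 3-manifold. A priori, different isotopy classes of attaching maps of $\partial(\Sigma_{g_i,k_i}\times [0,1])$ to tubular neighborhoods of the curves of $C_i$ could produce topologically inequivalent flows; one needs to argue that, modulo the relative mapping class group of $\Sigma_{g_i,k_i}$, only finitely many such attaching maps remain. This reduces to counting bijections between the $k_i$ boundary circles of $\Sigma_{g_i,k_i}$ and the curves of $C_i$ (together with their orientations), which is bounded by a finite function of $k_i$; and $k_i\leq |C|$ is already finite, with Theorem \ref{theorem4.5} providing even sharper control. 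Hence each admissible parameter set yields only finitely many topological equivalence classes of filtrating neighborhoods, and the overall count remains finite, as the corollary asserts.
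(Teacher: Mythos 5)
Your argument is essentially identical to the paper's proof: both combine Theorem \ref{theorem4.4} (finite combinatorial data: a partition of the dividing curve set $C$ plus genera $g_i$) with the bound $g_i\leq m+1$ from Theorem \ref{theorem4.5} and the finiteness of $|C|$ to conclude there are only finitely many possibilities. Your closing paragraph about controlling the isotopy classes of the attaching maps addresses a subtlety that the paper's proof passes over silently (it jumps from ``finitely many tuples $(\Sigma_{g_1,k_1},\ldots,\Sigma_{g_t,k_t})$'' directly to ``finitely many equivalence classes of $U$''), and your resolution --- absorbing the ambiguity into the relative mapping class group so that only the finitely many boundary-matchings matter --- is a reasonable and correct supplement.
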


\begin{proof}
Suppose $U$ is a filtrating neighborhood with basic set modeled by
$T$ and can be embedded into $M$. By Theorem \ref{theorem4.4}, $U$
is topologically equivalent to $(\phi_t,N)$ which is obtained by
attaching $\Sigma_{g_i,k_i}\times [0,1]$ to $\overline{T}$  along
$C_i$ ($i=1,...,t$) for a partition $C_1 \sqcup C_2 \sqcup ...
\sqcup C_t$ of $C$. Here $C$ is the dividing curves set of
$\overline{T}$ and $k_i$ is the number of connected components of
$C_i$. Suppose $M\cong M'\sharp m S^{1}\times S^{2}$ where $M'$ is
prime to $S^{1} \times S^{2}$. Obviously, the partition number of
$\{c_1,...,c_n\}$ is finite. By Theorem \ref{theorem4.5}, $g_i \leq
m+1$ for any $i\in \{1,2,...,t\}$.  We can obtain that $\sum_{i=1}^t
k_i =n$. Therefore, the number of all the possibilities of
$(\Sigma_{g_1,k_1},...,\Sigma_{g_t,k_t})$ is finite. This means  the
number of all the possibilities of the topological equivalence
classes of $U$ is finite.
\end{proof}

\begin{example}\label{example4.7}
Suppose $T=\mathcal {L}(1,1)$ is a template as Figure
\ref{figure12}-1 shows. It is a Lorenz like template, see \cite{GHS}
or \cite{Yu1}. The thickened template $\overline{T}$ of $T$ is shown
in Figure \ref{figure12}-2 and Figure \ref{figure12}-3. $X$ and $Y$
are the exit set and the entrance set of $\partial \overline{T}$
respectively. $\{a,b,c\}$ is the dividing curves set of
$\overline{T}$.  Figure \ref{figure12}-4 shows $X\cup \{a,b,c\}$.

\begin{figure}[htp]
\begin{center}
  \includegraphics[totalheight=6cm]{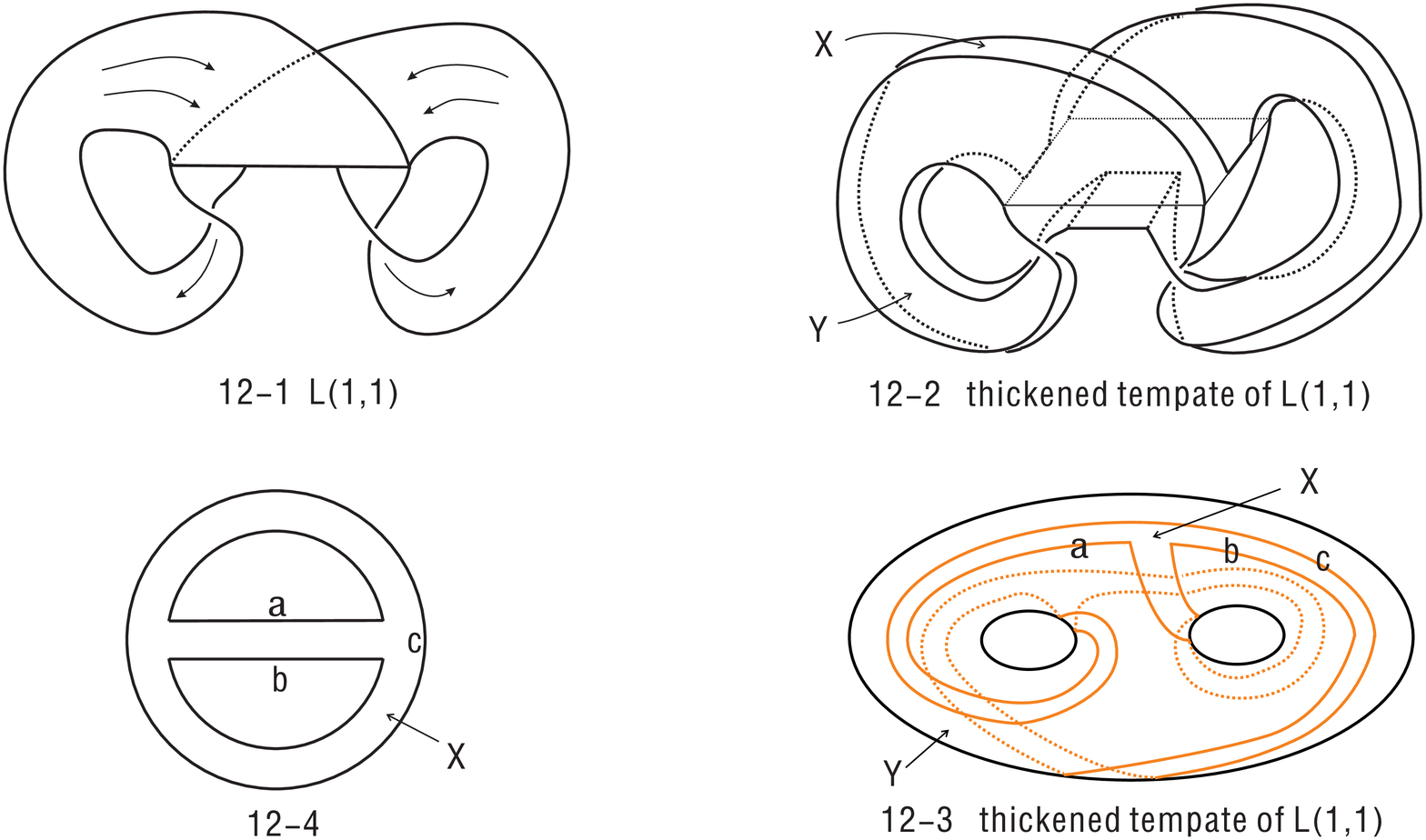}\\
  \caption{}\label{figure12}
  \end{center}
\end{figure}

Let $M$ be a closed orientable irreducible 3-manifold and
$(\phi_t,N)$ be a filtrating neighborhood with invariant set modeled
by $T$ which can be realized as a filtrating neighborhood of an NS
flow on $M$. Noting that $a$ and $b$ are symmetric in $\overline{T}$
and $M$ is prime to $S^1 \times S^2$, by a proof similar to that of
Corollary \ref{corollary4.6},
 we know that $(\phi_t,N)$ may be obtained by one of the following procedures:

\begin{enumerate}
\item attaching $\Sigma_{0,2}\times [0,1]$ to $\overline{T}$ along
$\{a,b\}$ and $\Sigma_{0,1}\times [0,1]$ to $\overline{T}$ along
$\{c\}$;
\item attaching $\Sigma_{0,2}\times [0,1]$ to $\overline{T}$ along
$\{a,c\}$ and $\Sigma_{0,1}\times [0,1]$ to $\overline{T}$ along
$\{b\}$;
\item attaching $\Sigma_{1,1}\times [0,1]$ to $\overline{T}$
along $\{a\}$ and two $\Sigma_{0,1}\times [0,1]$ to $\overline{T}$
along $\{b\}$ and $\{c\}$;
\item attaching $\Sigma_{1,1}\times [0,1]$ to $\overline{T}$
along $\{c\}$ and two $\Sigma_{0,1}\times [0,1]$ to $\overline{T}$
along $\{a\}$ and $\{b\}$.
\end{enumerate}

In case (1) and (2) above, by the proofs of Theorem 2 and Theorem 4
in \cite{Yu1}, it is easy to show that $N\cong L(3,1)\sharp
T^{2}\times [0,1]$. $M$ is irreducible and $N$ can be embedded into
$M$, therefore $M\cong L(3,1)$ and the interior of $N\cong
(L(3,1)-K)$.
 Here $K$ is a Hopf link in a three ball in $L(3,1)$.

In case (3) and (4) above, also  by the proofs of Theorem 2 and
Theorem 4 in \cite{Yu1}, we have that $N\cong S^{1}\times D^{2}-V$.
Here $V$ is a small open neighborhood of a standard $(3,1)$ torus
knot in the interior of $S^{1}\times D^{2}$. Since $N$ can be
realized as a filtrating neighborhood of an NS flow on $S^{1}\times
D^{2}$ and any closed orientable 3-manifold can admit an NS flow
(\cite{Yu2}), $M$ may be any closed orientable irreducible
3-manifold. In particular, if $M\cong S^3$ and $N$ can be realized
as a filtrating neighborhood of an NS flow on $S^3$, by the proofs
of Theorem 2 and Theorem 4 in \cite{Yu1}, $S^{3}-N$ may be a small
open neighborhood of a two components link $L$ which is composed of
a trivial knot and a $(p,3)$ torus knot in the boundary of a solid
torus neighborhood of the trivial knot. Here $p$ is any integer such
that $p$ and 3 are coprime.
\end{example}

\begin{remark}
Example \ref{example4.7} tells us that, given a closed orientable
3-manifold $M$ and a template $T$, although there exist at most a
finite number of filtrating neighborhoods  with invariant set
modeled by $T$ such that each of them can be realized as a
filtrating neighborhood of an NS flow on $M$, there may be
infinitely many different ways to embed some filtrating neighborhood
of $T$ as a filtrating neighborhood of an NS flow on $M$.
\end{remark}

\section{Embedding Templates in NS Flows on 3-Manifolds}
To prove the main theorem (Theorem \ref{theorem5.5}) of this
section, we first recall a theorem due to J. Morgan \cite{Mo}:
\begin{theorem}\label{theorem5.1}
Suppose that $M$ is an orientable 3-manifold with
$X(M,\partial_{-}M)= 0$ where $\partial_{-}M$ is composed of some
connected components of $\partial M$ and $X(M,\partial_{-}M)$ is the
Euler number of $(M,\partial_{-}M)$. For some nonnegative integer
$k$, $(M\sharp k S^{1}\times S^{2},
\partial_- M)$ has an NMS flow which is transverse outside to
$\partial_- M$ and transverse inside to $\partial M-\partial_- M$.
\end{theorem}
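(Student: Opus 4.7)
The plan is to translate the statement into the language of round handle decompositions and then construct one explicitly for $M \sharp k(S^1 \times S^2)$ for an appropriate $k\geq 0$. By the Asimov correspondence between NMS flows on compact 3-manifolds and round handle decompositions (RHDs), it suffices to exhibit a relative RHD of $(M \sharp k(S^1\times S^2), \partial_- M)$ whose flow direction matches the prescribed transversality on the boundary. In dimension three, round handles come in three types (round $i$-handles for $i=0,1,2$), each homeomorphic to $D^i \times D^{2-i} \times S^1$ and each contributing $0$ to the relative Euler characteristic, which is consistent with the hypothesis $\chi(M,\partial_-M)=0$.

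First, I would fix an ordinary relative handle decomposition of $(M, \partial_- M)$, built from a collar $\partial_- M \times [0,1]$ (equipped with the flow $\partial/\partial t$) by attaching $h_1$ one-handles, $h_2$ two-handles, and $h_3$ three-handles in order of increasing index. The hypothesis $\chi(M, \partial_- M) = 0$ forces $h_1 - h_2 + h_3 = 0$. The strategy is to group these handles into round handles: fuse each 1-handle with a 2-handle into a round 1-handle, and each 3-handle with a 2-handle into a round 2-handle (with round 0-handles introduced if the boundary configuration requires closed attracting orbits). The Euler characteristic identity ensures that the handle counts are numerically consistent with such a pairing.

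Next, I would perform handle slides and reorderings to arrange the attaching data so that each 1-handle sits naturally inside the cocore disc of some 2-handle, and dually for 2/3-pairs. When the geometric pairing is obstructed by linking or framing issues between the attaching spheres, I would insert a cancelling 1-/2-handle pair attached along trivial curves. Such an insertion has the effect of connect summing the ambient manifold with $S^1 \times S^2$ and increases $k$ by one; the extra 1-handle provides a fresh arc that can absorb an otherwise unpaired 2-handle into a round 1-handle, while the extra 2-handle simultaneously resolves a 2/3 pairing obstruction on the dual side.

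The principal obstacle will be to control the number of cancelling-pair insertions and to verify that the resulting round handle decomposition induces precisely the prescribed outward transversality on $\partial_-M$ and inward transversality on $\partial M - \partial_- M$. I would handle this by induction on the number of unpaired handles, showing that a suitably placed cancelling pair strictly reduces a well-defined obstruction measured by the intersection data of attaching circles with the ascending/descending spheres of previously attached handles. Since the initial handle decomposition is finite, only finitely many $S^1 \times S^2$ summands are needed. Once every ordinary handle has been absorbed into a round handle, the Asimov correspondence produces the desired NMS flow on $M \sharp k(S^1 \times S^2)$, with the round 0- and 2-handles giving the attracting and repelling periodic orbits, the round 1-handles giving saddle periodic orbits, and the flow transverse to the boundary in the required pattern.
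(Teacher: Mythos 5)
The paper does not actually prove this statement: it is recalled verbatim from Morgan \cite{Mo} and used as a black box, so there is no in-paper argument to compare yours against. Your strategy --- convert an ordinary relative handle decomposition of $(M,\partial_-M)$ into a round handle decomposition by pairing $i$-handles with $(i+1)$-handles, stabilize by $S^1\times S^2$ summands when the pairing is obstructed, and then invoke the Asimov correspondence \cite{As} between round handle decompositions and NMS flows --- is indeed the route taken in the literature, so the outline is the right one.

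As a proof, however, the proposal has a genuine gap at exactly the step that constitutes the theorem. In dimension at least four, Asimov shows that $\chi=0$ alone suffices to realize the pairing geometrically; in dimension three this fails (which is precisely why Morgan's theorem needs the $S^1\times S^2$ summands at all), and the entire content of the result is the analysis of how the geometric fusion of a $1$-handle with a $2$-handle (and a $2$-handle with a $3$-handle) can be obstructed and why each stabilization removes an obstruction. Your induction ``on the number of unpaired handles'' appeals to ``a well-defined obstruction measured by the intersection data of attaching circles with the ascending/descending spheres'' that is never defined; defining it, showing it is well defined under handle slides, and proving a suitably placed insertion strictly decreases it \emph{is} the proof, and none of that is supplied. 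Two smaller points: (i) a genuinely cancelling $1$-/$2$-handle pair leaves the manifold unchanged, so it cannot account for the growth of $k$; what produces an $S^1\times S^2$ summand is a $1$-handle together with a $2$-handle whose attaching circle is null-homotopic in the old boundary and geometrically disjoint from the belt sphere of the $1$-handle, so the mechanism needs to be restated; (ii) when $\partial_-M=\emptyset$ the decomposition must begin with $0$-handles and the pairing must also produce round $0$-handles, a case omitted by your count $h_1-h_2+h_3=0$.
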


\begin{lemma} \label{lemma5.2}
Let $W$ be a compact 3-manifold with boundary $\Sigma\cup S_1\cup
...\cup S_g$ where $\Sigma$ is a genus $(g+1)$ closed orientable
surface and each $S_i$ ($i=1,...,g$) is homeomorphic to a 2-shpere.
Then for some nonnegative integer $k$, there exists an NMS flow
$\phi_t$ on $W\sharp kS^{1}\times S^{2}$ such that $\phi_t$ is
transverse outside to $\partial(W\sharp kS^{1}\times
S^{2})=\Sigma\cup S_1\cup ...\cup S_g$.
\end{lemma}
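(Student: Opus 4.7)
The plan is to reduce the lemma directly to Morgan's theorem (Theorem \ref{theorem5.1}) by taking $\partial_- M = \partial W$, the entire boundary. If this choice is legal, then the ``transverse inside'' condition on $\partial M - \partial_- M$ is vacuous, and Morgan's conclusion delivers exactly the NMS flow on $W \sharp k S^1 \times S^2$ transverse outward to all of $\partial W$ that we want.

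The only hypothesis to verify is the Euler-number condition $X(W,\partial W)=0$. First I would compute $X(\partial W)$ by additivity over the disjoint union $\partial W=\Sigma \sqcup S_1 \sqcup \cdots \sqcup S_g$, obtaining
\[
X(\partial W) \;=\; \bigl(2-2(g+1)\bigr) + 2g \;=\; -2g+2g \;=\; 0.
\]
Note that the numerology works precisely because the genus of $\Sigma$ is $g+1$: this is exactly the role of the hypothesis. Next I would invoke the classical consequence of Poincar\'e--Lefschetz duality that a compact odd-dimensional manifold with boundary satisfies $X(W)=\tfrac{1}{2}X(\partial W)$. Hence $X(W)=0$ and therefore $X(W,\partial W)=X(W)-X(\partial W)=0$.

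With the Euler-number hypothesis satisfied, Theorem \ref{theorem5.1} applied with $\partial_-M=\partial W$ produces a nonnegative integer $k$ and an NMS flow on $W\sharp k S^{1}\times S^{2}$ transverse outward along $\partial W$, proving the lemma. There is no real obstacle: the genus assumption on $\Sigma$ is calibrated so that the boundary Euler characteristic of $W$ vanishes, which is precisely what is needed to feed $W$ into Morgan's stabilization theorem.
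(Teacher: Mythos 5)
Your proof is correct and follows essentially the same route as the paper: both verify the hypothesis $X(W,\partial W)=0$ of Morgan's theorem (Theorem \ref{theorem5.1}) with $\partial_- M=\partial W$ and then invoke it directly. The only cosmetic difference is that you cite the classical fact $X(W)=\tfrac12 X(\partial W)$ for compact odd-dimensional manifolds, whereas the paper re-derives it on the spot by doubling $W$; the Euler characteristic computation $X(\partial W)=2g+(2-2(g+1))=0$ is identical.
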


\begin{proof}
Attaching $W$ to a copy of $W$ along $\partial W$, we obtain a
closed 3-manifold $M$. Obviously $X(M)=0=2X(W,\partial W)-X(\partial
W)$. On the other hand,
\begin{equation}
X(\partial M)=\sum_{i=1}^{g}X(s_i)+X(\Sigma)
             =2g+(2-2(g+1))
             =0.
\nonumber
\end{equation}
Then we have  $X(W,\partial W)=0$, hence $(W,\partial W)$ satisfies
the condition of Theorem \ref{theorem5.1}. By Theorem
\ref{theorem5.1}, for some nonnegative integer $k$, $W\sharp
kS^{1}\times S^{2}$ admits an NMS flow $\phi_t$ which is transverse
outside to $\partial W$.
\end{proof}

\begin{theorem} \label{theorem5.3} \textbf{(Meleshuk \cite{Me})}
Any template $T$ can model a basic set $\Lambda$ of a Smale flow
$\varphi_t$ on $S^3$.
\end{theorem}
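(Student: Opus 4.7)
The plan is to embed the thickened template $\overline{T}$ directly in $S^3$ as one side of a Heegaard splitting and extend $\varphi^{T}_{t}$ into the complementary handlebody by a gradient-like Morse-Smale vector field. The key flexibility relative to the NS setting is that Smale flows allow hyperbolic singularities, so there is no Euler-characteristic obstruction to filling the complement with such a vector field.

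First, I would thicken $T$ to obtain $\overline{T}$ with flow $\varphi^{T}_{t}$ and boundary decomposition $\partial \overline{T} = X \cup Y \cup C$. Since $\overline{T}$ is a handlebody of genus $g=g(T)$, embed it in $S^{3}$ as the standard genus-$g$ Heegaard handlebody, so that the complement $H' := S^{3} \setminus \operatorname{int}(\overline{T})$ is also a handlebody of genus $g$ with $\partial H' = \partial \overline{T}$. Relative to $H'$, the flow $\varphi^{T}_{t}$ points inward across $Y$, outward across $X$, and is tangent to $\partial H'$ along the dividing curves $C$ with the saddle-type tangency inherited from the branch set of $T$.

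Next, I would construct a smooth Lyapunov function $f\colon H' \to [0,1]$ satisfying $f|_{X}=0$, $f|_{Y}=1$, $f|_{C}=\tfrac{1}{2}$, and whose gradient (for a suitable Riemannian metric) agrees near $\partial H'$ with the inward normal along $Y$, the outward normal along $X$, and the template's tangent vector along $C$. After a generic perturbation in the interior, $-\nabla f$ becomes a Morse-Smale vector field on $H'$ with only finitely many hyperbolic critical points of indices $0,1,2,3$ and no closed orbits. Gluing this vector field to $\varphi^{T}_{t}$ along $\partial \overline{T}$ produces a smooth flow $\varphi_{t}$ on $S^{3}$. Its chain recurrent set is the disjoint union of $K_{T}$ with the finitely many Morse singularities in $H'$, which is hyperbolic and of dimension at most one; transversality follows by a Kupka-Smale-type genericity step. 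Hence $\varphi_{t}$ is a Smale flow on $S^{3}$ and $K_{T}$ is a basic set modeled by $T$.

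The main obstacle is the construction of $f$ with the prescribed tangency along $C$. The gradient $\nabla f$ must be tangent to $\partial H'$ along $C$ and reproduce the saddle-type normal form of the template flow there; this requires an explicit local model in a tubular neighborhood of $C$, which must then be interpolated to a global Morse function on $H'$ with the prescribed boundary values on $X$, $Y$, and $C$. While this is standard Morse theory on manifolds with corners, handling the template-specific tangency data carefully, and verifying that the resulting glued vector field is smooth across $\partial \overline{T}$, is where the real work lies.
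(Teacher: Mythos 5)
The paper does not prove this statement; it is quoted from Meleshuk's thesis \cite{Me}, so your argument has to stand on its own. It does not, because of a recurrence problem that your construction actively creates rather than avoids. You fill the complementary handlebody $H'$ with $-\nabla f$ for a Morse function with $f|_{Y}=1$ and $f|_{X}=0$. Orbits enter $H'$ through $Y$ (the exit set of $\overline{T}$) and, for a generic such gradient field, an open dense set of them descend all the way to level $0$ and leave $H'$ through $X$ --- that is, they re-enter the thickened template through its entrance set. The glued flow therefore has orbits that pass arbitrarily close to $K_{T}$, exit through $Y$, traverse $H'$, re-enter through $X$, and approach $K_{T}$ again. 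These orbits (and the pseudo-orbits they generate) are chain recurrent but lie outside $K_{T}\cup\mathrm{Crit}(f)$, so your identification of $R(\varphi_t)$ is wrong; the chain recurrent set need not be one-dimensional or hyperbolic, $K_{T}$ is not an isolated basic set, and $\varphi_t$ is not a Smale flow. No Kupka--Smale perturbation repairs this, since the problem is the existence of $W^{u}(K_T)\cap W^{s}(K_T)$ connections through $H'$, not their transversality.

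The missing idea is precisely the one the paper develops in Section 4: before filling in the complement you must enlarge $\overline{T}$ to an isolating/filtrating neighborhood whose entrance and exit sets lie on \emph{disjoint} boundary components, e.g.\ by attaching $2$-handles along the dividing curves $C$ as in Theorem \ref{theorem4.3} (cf.\ Proposition \ref{proposition2.11}). Once $\partial\overline{N}$ splits into purely-entrance and purely-exit components, the complementary pieces are filled separately: the exit components flow into attracting basic sets (sinks are allowed here, since only a Smale flow, not an NS flow, is required) and the entrance components flow out of repelling ones, so no orbit leaving the template ever returns. This is the structure of the paper's own realization argument in Theorem \ref{theorem5.5} and of Meleshuk's construction; the genuinely hard part, which your proposal also does not address, is arranging the embedding of $\overline{N}$ in $S^{3}$ so that the complementary pieces admit such flows.
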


\begin{lemma} \label{lemma5.4}
There exists an NS flow $f_t$ on $S^2 \times I$ such that:
\begin{enumerate}
\item $f_t$ is transverse inward to $S^2 \times 0$ and outward to $S^2 \times
1$;
\item there is no flowline which starts in $S^2 \times 0$ and terminates
in $S^2 \times 1$.
\end{enumerate}
\end{lemma}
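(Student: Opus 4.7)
The plan is to construct $f_t$ by cutting $S^2\times I$ along an embedded closed orientable genus-$2$ surface $\Sigma$ separating the two boundary spheres, and assembling compatible NMS flows on the two resulting pieces. Such a $\Sigma$ exists: identify $S^2\times I$ with $S^3\setminus(\mathrm{int}\,B_0\sqcup\mathrm{int}\,B_1)$ and take $\Sigma$ to be a standard genus-$2$ Heegaard surface of $S^3$ with $B_0,B_1$ on opposite sides. Write $M_-\supset S^2\times 0$ and $M_+\supset S^2\times 1$ for the components of $S^2\times I\setminus\Sigma$; each is a genus-$2$ handlebody with an open ball removed. A direct computation gives $\chi(M_\pm)=\chi(\partial M_\pm)/2=(2+(-2))/2=0$, matching the Euler characteristic hypothesis of Lemma \ref{lemma5.2} (with $g=1$).

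I would then construct an NMS flow on $M_-$ that is transverse inward on all of $\partial M_-$ — by applying Lemma \ref{lemma5.2} and reversing time — and an NMS flow on $M_+$ transverse outward on all of $\partial M_+$. Since every orbit in $M_-$ is forward-trapped, its chain recurrent set contains no repelling closed orbits; the round handle counting identity $r_0-r_1+r_2=\chi(M_-)=0$ can be satisfied, for instance, by two attracting closed orbits together with two saddle closed orbits whose unstable separatrices land on these attractors. A symmetric configuration works on $M_+$ with repellers in place of attractors.

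Gluing the two flows along $\Sigma$, with $\Sigma$ transverse to the flow and pointing from $M_+$ into $M_-$, produces an NS flow $f_t$ on $S^2\times I$. Condition~(1) holds by construction. For Condition~(2), the surface $\Sigma$ functions as a one-way barrier: any forward orbit of a point in $S^2\times 0$ stays inside $M_-$ and must limit to one of the attracting orbits there, while any backward orbit of a point in $S^2\times 1$ stays inside $M_+$ and must emerge from a repelling orbit; in particular, no orbit can connect the two boundary spheres.

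The main obstacle is that Lemma \ref{lemma5.2} only produces an NMS flow on $M_\pm\sharp k_\pm(S^1\times S^2)$ for some $k_\pm\geq 0$, whereas I need $k_-=k_+=0$ so that the glued manifold is $S^2\times I$ itself rather than $(S^2\times I)\sharp(k_-+k_+)(S^1\times S^2)$. Since each $M_\pm$ is a very simple manifold (a genus-$2$ handlebody with a ball removed), I would dispose of this by giving an explicit round handle decomposition of $M_\pm$ and checking by hand that the attaching maps of the handles introduce no spurious $S^1\times S^2$ summands.
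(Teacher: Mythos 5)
Your overall architecture (a one-way genus-$2$ barrier $\Sigma$ splitting $S^2\times I$ into two pieces, each carrying a flow transverse in a single direction across its whole boundary) is a genuinely different route from the paper's, which simply takes the Smale flow on $S^3$ with one source and one sink constructed in Lemma 3.4 of \cite{Yu2} (with $n=1$) and deletes standard ball neighborhoods of the two singularities; conditions (1) and (2) are then inherited from that explicit construction. The logic of your gluing step is sound: since $\Sigma$ can only be crossed from $M_+$ to $M_-$, no orbit can run from $S^2\times 0$ to $S^2\times 1$, and no new chain recurrence is created (though you should still perturb the gluing along $\Sigma$ so that the unstable laminations coming from $M_+$ meet the stable laminations of $M_-$ transversally in $\Sigma$, to keep the glued flow Smale).

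The genuine gap is exactly the one you flag and then defer: getting $k_\pm=0$. This is not a routine verification but the entire difficulty of the construction, and there are concrete reasons to doubt your proposed resolution. Morgan's theorem requires the $\sharp\, k(S^1\times S^2)$ stabilization precisely because many pairs $(M,\partial_-M)$ with vanishing relative Euler characteristic admit no round handle decomposition, and your $M_\pm$ (a genus-$2$ handlebody minus a ball) has a $2$-sphere boundary component, which is the most delicate case. In particular, the specific configuration you propose for $M_-$ --- two attracting orbits and two saddle orbits, no repellers --- cannot work: such a flow exhibits $M_-$ as a union of two round $0$-handles and two round $1$-handles built up from the attractors, the attaching circles of the round $1$-handles are the curves where the saddle separatrices enter the attractor solid tori and are therefore freely homotopic in the previously built stage to nonzero multiples of the attracting orbits, hence essential in the intermediate boundary surfaces; surgery along essential curves only produces boundary components of nonpositive Euler characteristic, so the resulting manifold can never acquire the $S^2$ boundary component that $M_-$ has. (Relatedly, the inference ``every orbit in $M_-$ is forward-trapped, hence there are no repelling closed orbits'' is a non sequitur: an interior repeller is perfectly compatible with the flow entering through all of $\partial M_-$, and a repeller or a nontrivial basic set may in fact be forced here.) So to complete your argument you would need either a correct explicit round handle decomposition of $H_2\setminus \mathrm{int}\,B^3$ realizing the transversality with $k=0$ and verifying that the resulting manifold is really $M_\pm$ and not some other manifold with the same boundary, or to abandon NMS pieces in favour of NS pieces with nontrivial basic sets --- which is essentially what the paper's citation of \cite{Yu2} accomplishes in one line. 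As written, the proposal does not establish the lemma.
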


\begin{proof}
In the proof of Lemma 3.4 of \cite{Yu2}, choosing $n=1$, we get a
Smale flow $\varphi_t$ on $S^3$ with two singularities: one is a
sink, the other is a source. Cutting two standard small
neighborhoods of the sink and the source respectively, we obtain an
NS flow $f_t$ on $S^2 \times I$ which satisfies (1) of Lemma
\ref{lemma5.4}. By the argument in the proof of Lemma 3.4 of
\cite{Yu2}, it is easy to prove that there is no flowline of $f_t$
which starts in $S^2 \times 0$ and terminates in $S^2 \times 1$.
\end{proof}

\begin{theorem} \label{theorem5.5}
Let $T$ be a template. Then there exists a positive integer $n$ such
that  $nS^1 \times S^2$ admits an NS flow $\psi_t$  with a basic set
$\Lambda$ modeled by $T$.
\end{theorem}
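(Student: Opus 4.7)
The plan is to start from Meleshuk's realization of $T$ on $S^3$ given by Theorem \ref{theorem5.3} and then surgically remove the remaining singularities using $S^2 \times I$ tubes from Lemma \ref{lemma5.4}. By Theorem \ref{theorem5.3}, there is a Smale flow $\varphi_t$ on $S^3$ having a basic set $\Lambda$ modeled by $T$. First I would arrange that every zero-dimensional basic set of $\varphi_t$ is either a source or a sink: any saddle-type hyperbolic fixed point sitting in the complement of a filtrating neighborhood $U_\Lambda$ of $\Lambda$ can be cancelled with a neighboring sink or source by a local Morse-type bifurcation without affecting $\Lambda$. Once this has been arranged, the Euler-characteristic balance $0 = \chi(S^3) = \#\{\text{sinks}\} - \#\{\text{sources}\}$ (one-dimensional hyperbolic basic sets contribute $0$) forces the number of sources to equal the number of sinks; call this common value $p$.

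Next, excise a small open $3$-ball around each of the $2p$ singularities, obtaining a compact $3$-manifold $Y = S^3 \setminus (2p \text{ open balls})$ on which $\varphi_t$ restricts to an NS flow, since every singularity has been removed. The boundary $\partial Y$ is a disjoint union of $2p$ two-spheres: $p$ ``source spheres'' where the flow exits $Y$, and $p$ ``sink spheres'' where the flow enters $Y$. Pair each source sphere with a sink sphere arbitrarily and glue in a copy of $(S^2 \times I, f_t)$ from Lemma \ref{lemma5.4} between them, identifying the entrance sphere $S^2 \times \{0\}$ of $f_t$ with the source sphere and the exit sphere $S^2 \times \{1\}$ with the sink sphere so that the flow directions agree along both gluings. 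The Lemma \ref{lemma5.4} property that no $f_t$-orbit runs directly from $S^2 \times \{0\}$ to $S^2 \times \{1\}$ is essential here: it prevents the creation of shortcut orbits that could spoil the basic-set structure, so the glued flow $\psi_t$ on the resulting closed $3$-manifold $M$ is a genuine NS flow and $\Lambda$ (sitting in the unmodified region $U_\Lambda$) persists as one of its basic sets, still modeled by $T$.

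Finally, topologically $M$ is obtained from $S^3$ by removing $2p$ open $3$-balls and then attaching $p$ sphere-bounded $1$-handles in pairs, which is exactly the standard realization of $S^3 \sharp p(S^1 \times S^2) = p(S^1 \times S^2)$; setting $n = p$ completes the proof. The main obstacle is the saddle-cancellation step at the beginning: one must argue that Meleshuk's construction can be arranged (or modified after the fact) so that all hyperbolic fixed points of $\varphi_t$ outside a filtrating neighborhood of $\Lambda$ are of source or sink type. This should reduce to a standard Morse-theoretic handle rearrangement on $S^3 \setminus U_\Lambda$, where $\varphi_t$ is gradient-like, since saddle cancellation there is a purely local move in the complement of $\Lambda$. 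A secondary obstacle, if this cancellation is delicate, is to instead build $M$ directly from a filtrating neighborhood of $\Lambda$ (via Theorem \ref{theorem4.3}), cap its boundary components using Lemma \ref{lemma5.2}, and close up the remaining sphere boundaries with Lemma \ref{lemma5.4} tubes; then one verifies by bookkeeping of $S^1 \times S^2$ summands produced by each capping and tubing step that the resulting closed manifold is again of the form $n(S^1 \times S^2)$.
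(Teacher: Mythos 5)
Your first step---cancelling every saddle singularity of Meleshuk's flow against a nearby source or sink so that only sources and sinks remain in $S^3\setminus U_\Lambda$---is the gap, and it cannot be repaired in general. The complement of a filtrating neighborhood $N$ of $\Lambda$ in $S^3$ splits into pieces, each bounded by a single component of $\partial N$, and these components can have genus $g\ge 2$ (the genus bound from Theorem 1 of \cite{Yu2} used in Theorem \ref{theorem4.5} applies to NS flows, not to Meleshuk's singular Smale flow). Take a piece $M^{+}$ whose boundary $\Sigma_g$ lies entirely in the entrance set of $N$, so the flow exits $M^{+}$ through all of $\partial M^{+}$. If the only basic sets inside $M^{+}$ were $p$ sources and $q$ sinks, excising small balls around them would leave a nonsingular flow on $W=M^{+}\setminus(p+q\ \text{balls})$ transverse to its boundary, and Poincar\'e--Hopf forces $\chi(W,\partial_- W)=0$, i.e.\ $p-q=1-g$; equivalently, the Morse--Conley inequalities demand a basic set with nontrivial index in degree $1$ whenever $H_1(M^{+})\neq 0$, which neither a source nor a sink provides. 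So for $g\ge 1$ the cancellation bottoms out before you reach only sources and sinks: this is not a local move in a ball but a global constraint on the complementary piece, and it is exactly the obstruction that Morgan's theorem (Theorem \ref{theorem5.1}) quantifies---the piece admits the required boundary-transverse flow only after connect-summing with some copies of $S^1\times S^2$. Those extra summands must appear in $n$, so your count $n=p$ is also wrong.

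Your ``secondary obstacle'' paragraph is, in outline, the paper's actual proof: decompose $\overline{S^3\setminus N}$ into the complementary pieces, handle genus $\ge 2$ pieces by cutting out balls and applying Lemma \ref{lemma5.2} (which rests on Morgan's theorem and introduces the $S^1\times S^2$ summands $m_i$, $n_j$), handle torus pieces as knot complements via Franks' Proposition 6.1, leave sphere pieces as punctures, check by Poincar\'e--Hopf that the leftover entrance and exit spheres match in number $r$, and only then close up with the $S^2\times I$ flows of Lemma \ref{lemma5.4}, getting $n=\sum m_i+\sum n_j+r$. So your use of Lemma \ref{lemma5.4} for the final gluing is the right idea, but the heart of the proof is the treatment of the positive-genus complementary pieces, which your main line of argument assumes away.
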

\begin{proof}
Let $(\phi_t,N)$ be a filtrating neighborhood of $\Lambda$ of
$\varphi_t$ on $S^3$ where $\varphi_t$ is the Smale flow in Theorem
\ref{theorem5.3}. Suppose $\partial
N=\Sigma_1^+\cup...\cup\Sigma_t^+\cup\Sigma_1^-\cup...\cup\Sigma_s^-$
where $\Sigma_1^+\cup...\cup\Sigma_t^+$ and
$\Sigma_1^-\cup...\cup\Sigma_s^-$ are the entrance set and exit set
of $\varphi_t$ on $N$ respectively.

Suppose
\begin{equation}
\begin{split}
&g_i^+>1, ~~~i=1,...,t_1\\
&g_i^+=0, ~~~i=t_1+1,..., t_1+t_2\\
&g_i^+=1, ~~~i=t_1+t_2+1,...,t
\end{split}\nonumber
\end{equation}

and
\begin{equation}
\begin{split}
&g_j^->1, ~~~j=1,...,s_1\\
&g_j^-=0, ~~~j=s_1+1,..., s_1+s_2\\
&g_j^-=1, ~~~j=s_1+s_2+1,...,s.
\end{split}\nonumber
\end{equation}

Here each of $t_1$, $t_2$, $t-t_1-t_2$, $s_1$, $s_2$ and $s-s_1-s_2$
is a nonnegative integer and $g_i^+$ ($g_j^-$) is the genus of
$\Sigma_i^+$ ($\Sigma_j^-$).

Since any embedded surface in $S^3$ is separable,
$\overline{S^3-N}=M_1^+\sqcup...\sqcup M_t^+ \sqcup
M_1^-\sqcup...\sqcup M_s^-$ where $\partial M_i^+=\Sigma_i^+$ and
$\partial M_j^-=\Sigma_j^-$. Cutting $g_i^+ -1$ open 3-balls in
$M_i^+$ $(i=1,...,t_1)$, we get a compact 3-manifold $W_i^+$ such
that $\partial W_i^+$ is composed of $\Sigma_i^+$ and $g_i^{+}-1$
2-spheres. Similarly, cutting $g_j^- -1$ open 3-balls in $M_j^-$
$(j=1,...,s_1)$, we get a compact 3-manifold $W_j^-$ such that
$\partial W_j^-$ is composed of $\Sigma_j^-$ and $g_j^- -1$
2-spheres. By Lemma \ref{lemma5.2}, there exist NMS flows
$\phi_i^+(t)$ $(i=1,...,t_1)$ and $\phi_j^-(t)$ $(j=1,...,s_1)$ on
$W_i^+\sharp m_i S^1 \times S^2$ and $W_j^- \sharp n_j S^1\times
S^2$ respectively such that $\phi_i^+(t)$ is transverse outside to
$\partial W_i^+$ and $\phi_j^-(t)$ is transverse inside to $\partial
W_j^-$.

When $t_1 +t_2 +1\leq i\leq t$ and $s_1 +s_2 +1\leq j\leq s$,
$\partial M_i^+\cong \partial M_j^-\cong T^2$.  $M_i^+$ $(M_j^-)$
can be embedded into $S^3$, so $M_i^+$ $(M_j^-)$ is a knot
complement. By Proposition 6.1 of J. Franks \cite{F1}, there exists
an NS flow $\phi_i^+(t)$ ($\phi_j^-(t)$) on $M_i^+$ ($M_j^-$) which
is transverse inside (outside) to $\partial W_i^+$ ($\partial
W_j^-$).

By attaching $(W_i^+\sharp m_i S^1 \times S^2, \phi_i^+(t))$
$(i=1,...,t_1)$, $(W_j^- \sharp n_j S^1\times S^2, \phi_j^-(t))$
$(j=1,...,s_1)$, $(M_i^+, \phi_i^+(t))$  $(i=t_1 +t_2 +1,..., t)$
and $(M_j^-, \phi_j^-(t))$ $(j=s_1 +s_2 +1,..., s)$ to $(N, \phi_t)$
standardly, we get an NS flow $\psi_t^1$ on $V$ where $V$ is
homeomorphic to $(\sum_{i=1}^{t_1} m_i +\sum_{j=1}^{s_1} n_j) S^1
\times S^2$ with $(\sum_{i=1}^{t_1} g_i^+ -t_1)+ (\sum_{j=1}^{s_1}
g_j^- -s_1) + t_2 + s_2$ punctures. By Poinc\'are-Hopf theorem, it
is easy to show that $\sum_{i=1}^{t_1} g_{i}^+ -t_1
+s_2=\sum_{j=1}^{s_1}g_j^- -s_1 + t_2 $ (denote this number by $r$).
So $(V,\psi_t ^1)$ is transverse outside to 2-spheres $S_1^+,...,
S_r^+$ and inside to 2-spheres $S_1^-,...,S_r^-$.

If either $S_i^+$ belongs to some $W_i^+\sharp m_i S^1 \times S^2$
or $S_i^-$ belongs to some $W_j^-\sharp n_j S^1 \times S^2$, we
attach $S_i^+$ to $S_i^-$. Otherwise, we attach  $(S^2 \times I,
f_t)$ in Lemma \ref{lemma5.4} to $(V,\psi_t ^1)$ such that $S^2
\times 0$ is attached to $S_i^+$ and $S^2 \times 1$ is attached to
$S_i^-$.

Thus we obtain a flow $\varphi_t$ on $n S^1 \times S^2$ where
$n=\sum_{i=1}^{t_1} m_i +\sum_{j=1}^{s_1} n_j+r$. Noting that (2) in
Lemma \ref{lemma5.4} ensures that there are no more chain recurrent
points except the chain recurrent sets which we have controlled, it
is easy to check that $\varphi_t$ is an NS flow.
\end{proof}

By a comparison of Theorem \ref{theorem5.5} and the proof of Lemma
3.6 in \cite{Yu2}, we have the following corollary.

\begin{corollary}
Let $T$ be a template. There exists a positive integer $n$  such
that, for any closed orientable 3-manifold $M'$, there exists an NS
flow $\psi_t$ on $M=M'\sharp nS^1 \times S^2$ with a basic set
$\Lambda$ modeled by $T$.
\end{corollary}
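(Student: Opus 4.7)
The plan is to extend the construction of Theorem \ref{theorem5.5} by inserting $M'$ into one of the ``filler'' flow pieces used in that proof. Starting from the NS flow on $n_0 S^1 \times S^2$ with basic set modeled by $T$ produced by Theorem \ref{theorem5.5}, I will modify the construction so that the underlying manifold becomes $M' \sharp n S^1 \times S^2$ for some positive integer $n$ (depending on $T$ and $M'$). The key observation is that the role played by $(S^2 \times I, f_t)$ of Lemma \ref{lemma5.4} can be played by any cobordism between two $2$-spheres carrying an NS flow with the ``no direct flowline'' property; in particular, by $(M' \sharp k S^1 \times S^2)$ with the interiors of two disjoint $3$-balls removed, for a suitable nonnegative integer $k$.

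The main technical step is to produce, for any closed orientable $3$-manifold $M'$, an integer $k=k(M')\ge 0$ and an NS flow $f'_t$ on $W := (M' \sharp k S^1 \times S^2) \setminus (\mathrm{int}(B_1^3)\cup\mathrm{int}(B_2^3))$ such that (i) $f'_t$ is transverse inward on one boundary sphere and outward on the other, and (ii) no flowline of $f'_t$ runs from the entrance sphere to the exit sphere. The construction mirrors that of Lemma \ref{lemma5.4}: first produce a Smale flow on $M' \sharp k S^1 \times S^2$ possessing a source, a sink, and an intermediate saddle basic set whose stable and unstable laminations together block all orbits from the source to the sink; then delete small invariant ball neighborhoods of the source and sink. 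The existence of such a Smale flow for some $k$ is the content of the proof of Lemma 3.6 in \cite{Yu2} that the corollary's statement refers to; its ingredients are Morgan's theorem (Theorem \ref{theorem5.1}), applied to a suitable auxiliary cobordism, together with standard modifications that introduce a blocking one-dimensional basic set without destroying the source or sink.

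With $(W, f'_t)$ in hand, I will carry out the gluing procedure of the proof of Theorem \ref{theorem5.5} but substitute $(W, f'_t)$ for one of the $(S^2 \times I, f_t)$ attachments: the entrance sphere of $W$ is glued to some $S_i^+$ and the exit sphere to the corresponding $S_i^-$. A standard connect-sum bookkeeping then shows that the resulting closed $3$-manifold has the form $M' \sharp n S^1 \times S^2$ with $n = n_0 + k$ (the ``$+1$ handle'' from the double identification being absorbed by the $k$ produced by Morgan). The resulting flow is again an NS flow: property (ii) of $f'_t$ prevents new chain-recurrent orbits from arising through the inserted piece, while the basic set modeled by $T$ from the original construction is preserved untouched. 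The hardest step will be establishing the blocking property of the saddle basic set in the Smale flow on $M' \sharp k S^1 \times S^2$, i.e., arranging that its stable and unstable manifolds truly separate the source from the sink; this is precisely the technical point that must be imported from \cite{Yu2}.
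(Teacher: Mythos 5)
Your overall strategy --- splicing the construction of Theorem \ref{theorem5.5} together with a ``blocking'' sphere-to-sphere cobordism built from $M'$ --- is essentially what the paper intends (its entire proof is the remark that one should compare Theorem \ref{theorem5.5} with the proof of Lemma 3.6 in \cite{Yu2}). But as written your argument has a genuine gap in the quantifiers. You produce $n=n_0+k$ with $k=k(M')$ coming from Morgan's theorem applied to an auxiliary cobordism built from $M'$, so your $n$ depends on $M'$. The corollary asserts the opposite order: a single $n$, depending only on $T$, that works for \emph{every} closed orientable $M'$; this uniformity is the whole point, as the Question that follows the corollary makes clear. Morgan's $k$ genuinely depends on the topology of the input manifold (it measures the failure of a round handle decomposition), so your argument only proves the weaker ``for all $M'$ there exists $n$'' statement. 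The fix is to use the stronger input the paper actually relies on: by \cite{Yu2} every closed orientable 3-manifold already admits an NS flow with no extra $S^1\times S^2$ summands, and the construction there yields a Smale flow on $M'$ itself with exactly one source, one sink and one-dimensional saddle sets; deleting invariant balls around the source and sink gives your blocking cobordism with $k=0$, hence a uniform $n$.

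A second, smaller gap: your insertion point need not exist. The $(S^2\times I,f_t)$ attachments in the proof of Theorem \ref{theorem5.5} occur only when the number $r$ of sphere pairs $S_i^{\pm}$ is positive; if every boundary component of the filtrating neighborhood $N\subset S^3$ is a torus (which happens, e.g., for Lorenz-like templates as in Example \ref{example4.7}), then $r=0$ and there is nowhere to splice in your cobordism. You need either a separate connected-sum construction for NS flows in this case, or to create a sphere pair artificially (e.g.\ by inserting a source--sink pair into a trivial flow box of the already-built flow on $n_0 S^1\times S^2$ and excising their ball neighborhoods, at the cost of one extra but still uniform $S^1\times S^2$ summand). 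The remaining steps --- the no-through-flowline property preventing new chain recurrence, and the connect-sum bookkeeping --- are sound and parallel the paper's own argument for Theorem \ref{theorem5.5}.
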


Furthermore, the following question is natural.

\begin{question}
Let $T$ be a template. Does there exist a positive integer $n$ such
that any closed orientable 3-manifold $M$ admits an NS flow with a
basic set modeled by $T$ if and only if $M=M'\sharp nS^1 \times
S^2$? Here $M'$ is any closed orientable 3-manifold.
\end{question}

\section*{Acknowledgments} The author would like to thank Hao Yin for
his many helpful suggestions and comments. The author also would
like to thank the referee for carefully reading this paper and
providing many helpful suggestions and comments.

\bibliographystyle{amsplain}

\begin{thebibliography}{99}
\bibitem{As}
     \newblock D. Asimov,
     \newblock \emph{Round handles and non-singular Morse-Smale flows},
     \newblock Ann. of Math. (2), \textbf{102} (1975), no. 1, 41--54.

\bibitem{BB}
     \newblock F. B\'eguin, C. Bonatti,
     \newblock \emph{Flots de Smale en dimension 3: pr¨¦sentations finies de voisinages invariants
d'ensembles selles}, (French) [Smale flows in dimension 3: finite
presentations of invariant neighborhoods of saddle sets],
     \newblock Topology, \textbf{41} (2002), no. 1, 119--162.


\bibitem{Bo}
     \newblock R. Bowen,
     \newblock \emph{One-dimensional hyperbolic sets for flows},
     \newblock J. Differential Equations, \textbf{12} (1972), 173--179.


\bibitem{BoW}
     \newblock R. Bowen, P. Walters,
     \newblock \emph{Expansive one-parameter flows},
     \newblock J. Differential Equations, \textbf{12} (1972), 180--193.

\bibitem{BW1}
     \newblock J. Birman, R. F. Williams,
     \newblock \emph{Knotted periodic orbits in dynamical systems. I. Lorenz's equations},
     \newblock Topology, \textbf{22} (1983), no. 1, 47--82.


\bibitem{BW2}
     \newblock J. Birman, R. F. Williams,
     \newblock \emph{Knotted periodic orbits in dynamical system. II. Knot holders for fibered
knots},
     \newblock in ``Low-dimensional topology" (San Francisco, Calif., 1981),
1--60, Contemp. Math., 20, Amer. Math. Soc., Providence, RI, 1983.



\bibitem{F1}
     \newblock J. Franks,
     \newblock \emph{Knots, links and symbolic dynamics},
     \newblock Ann. of Math. (2), \textbf{113} (1981), no. 3, 529--552.


\bibitem{F2}
     \newblock J. Franks,
     \newblock ``Homology and Dynamical Systems,"
     \newblock CBMS49.American Mathematical Society. Providence,Rhode Island,1982.

\bibitem{F3}
     \newblock J. Franks,
     \newblock \emph{Symbolic dynamics in flows on three-manifolds},
     \newblock Trans. Amer. Math. Soc, \textbf{279} (1983), no. 1, 231--236.


\bibitem{F4}
     \newblock J. Franks,
     \newblock \emph{Flow equivalence of subshifts of finite type},
     \newblock Ergodic Theory Dynam. Systems, \textbf{4} (1984), no. 1, 53--66.


\bibitem{F5}
     \newblock J. Franks,
     \newblock \emph{Nonsingular Smale flows on $S^{3}$},
     \newblock Topology, \textbf{24} (1985), no. 3, 265--282.


\bibitem{Fr}
     \newblock G. Frank,
     \newblock \emph{Templates and train tracks},
     \newblock Trans. Amer. Math. Soc, \textbf{308} (1988), no. 2, 765--784.


\bibitem{GHS}
     \newblock R. W. Ghrist, P. J. Holmes and M. C. Sullivan,
     \newblock ``Knots and Links in Three-dimensional Flows,"
     \newblock Lecture Notes in Mathematics, 1654. Springer-Verlag, Berlin, 1997.

\bibitem{KSS}
     \newblock L. Kauffman, M. Saito and M. Sullivan,
     \newblock \emph{Quantum invariants of templates},
     \newblock J. Knot Theory Ramifications, \textbf{12} (2003), no. 5, 653--681.

\bibitem{LM}
     \newblock D. Lind, B. Marcus,
     \newblock ``An Introduction to Symbolic Dynamics and Coding,"
     \newblock Cambridge University Press, Cambridge, 1995.

\bibitem{Mo}
     \newblock J. Morgan,
     \newblock \emph{Nonsingular Morse-Smale flows on 3-dimensional manifolds},
     \newblock Topology, \textbf{18} (1978), 41-54.


\bibitem{Me}
     \newblock V. Meleshuk,
     \newblock ``Embedding Templates in Flows,"
     \newblock Ph.D thesis, Northwestern University, 2002.


\bibitem{PaS}
     \newblock B. Parry, D. Sullivan,
     \newblock \emph{A topological invariant of flows on $1$-dimensional spaces},
     \newblock Topology, \textbf{14} (1975), no. 4, 297--299.

\bibitem{PS}
     \newblock C. Pugh, M. Shub,
     \newblock \emph{Suspending subshifts},
     \newblock  in ``Contributions to analysis and geometry" (eds. Md. Baltimore),
                Johns Hopkins Univ. Press, (1981), 265--275.

\bibitem{Re}
     \newblock K. de Rezende,
     \newblock \emph{Smale flows on the three-sphere},
     \newblock Trans. Amer. Math. Soc, \textbf{303} (1987), no. 1, 283--310.


\bibitem{Ro}
     \newblock C. Robinson,
      \newblock ``Dynamical Systems. Stability, Symbolic Dynamics, and Chaos," 2$^{nd}$
     \newblock edition,
     \newblock Studies in Advanced Mathematics. CRC Press, Boca Raton, FL, 1999.


\bibitem{Sm}
     \newblock S. Smale,
     \newblock \emph{Differentiable dynamical systems},
     \newblock Bull. Amer. Math. Soc, \textbf{73} (1967), 747--817.

\bibitem{Su}
     \newblock M. C. Sullivan,
     \newblock \emph{Visually building Smale flows on $S^{3}$},
     \newblock Topology Appl, \textbf{106} (2000), no. 1, 1--19.


\bibitem{Wa}
     \newblock M. Wada,
     \newblock \emph{Closed orbits of nonsingular Morse-Smale flows on $S^3$},
     \newblock J. Math. Soc. Japan, \textbf{41} (1989), no. 3, 405--413.


\bibitem{Wi1}
     \newblock R. Williams,
     \newblock \emph{Classification of subshifts of finite type},
     \newblock Ann. of Math. (2), \textbf{98} (1973), 120--153; errata, ibid.
              (2) \textbf{99} (1974), 380--381.


\bibitem{Yu1}
     \newblock B. Yu,
     \newblock \emph{Lorenz like Smale flows on three-manifolds},
     \newblock Topology Appl, \textbf{156} (2009), no. 15, 2462--2469.

\bibitem{Yu2}
     \newblock B. Yu,
     \newblock \emph{Regular level sets of Lyapunov graphs of nonsingular Smale flows on 3-manifolds},
     \newblock Discrete Contin. Dyn. Syst, \textbf{29} (2011), no. 3, 1277--1290. doi:10.3934/dcds.2011.29.1277
\end{thebibliography}

\end{document}